\documentclass[11pt]{amsart}

\def \CC {\mathbb{C}}

\def \GG {\mathbb{G}}

\def \RR {\mathbb{R}}

\def \ZZ {\mathbb{Z}}

\def \Acal {\mathcal{A}}

\def \Ocal {\mathcal{O}}

\def \Vcal {\mathcal{V}}

\def \gfr {\mathfrak{g}}
\def \hfr {\mathfrak{h}}

\def \mfr {\mathfrak{m}}
\def \nfr {\mathfrak{n}}

\def \pfr {\mathfrak{p}}

\def \Gscr {\mathscr{G}}

\def \Mscr {\mathscr{M}}
\def \Nscr {\mathscr{N}}
\def \Oscr {\mathscr{O}}
\def \Pscr {\mathscr{P}}

\def \Rscr {\mathscr{R}}
\def \Sscr {\mathscr{S}}
\def \Tscr {\mathscr{T}}

\def \hbar {\bar{h}}

\def \Gtilde {\widetilde{G}}

\def \Gbf {\mathbf{G}}
\def \Hbf {\mathbf{H}}

\def \Mbf {\mathbf{M}}
\def \Nbf {\mathbf{N}}

\def \Pbf {\mathbf{P}}

\def \Sbf {\mathbf{S}}
\def \Tbf {\mathbf{T}}
\def \Ubf {\mathbf{U}}

\def \Zbf {\mathbf{Z}}

\DeclareMathOperator{\Hrm}{H}

\DeclareMathOperator{\vol}{vol}

\def \riso  {\stackrel{\sim}{\rightarrow}}

\DeclareMathOperator{\im}{im}

\DeclareMathOperator{\coker}{coker}

\DeclareMathOperator{\pr}{pr}

\DeclareMathOperator{\Res}{Res}

\DeclareMathOperator{\id}{id}

\DeclareFontFamily{U}{wncy}{}
\DeclareFontShape{U}{wncy}{m}{n}{<->wncyr10}{}
\DeclareSymbolFont{mcy}{U}{wncy}{m}{n}
\DeclareMathSymbol{\Sha}{\mathord}{mcy}{"58}

\newcommand{\triv}{\mathbbm{1}}

\DeclareMathOperator{\Ad}{Ad}

\newcommand{\gl}{\mathfrak{gl}}

\DeclareMathOperator{\GL}{GL}

\newcommand{\der}{\mathrm{der}}
\newcommand{\ad}{\mathrm{ad}}

\DeclareMathOperator{\Gal}{Gal}

\newcommand{\ur}{\mathrm{ur}}

\DeclareMathOperator{\Tr}{Tr}
\DeclareMathOperator{\Hom}{Hom}

\DeclareMathOperator{\End}{End}

\DeclareMathOperator{\Lie}{Lie}

\newcommand{\rad}{\mathbf{rad}}

\newcommand{\Fil}{\mathrm{Fil}}
\newcommand{\gr}{\mathrm{gr}}

\newcommand{\cl}[1]{\mkern 1.5mu\overline{\mkern-1.5mu#1\mkern-1.5mu}\mkern 1.5mu}
\newcommand{\scl}[1]{{#1}^s} 

\newcommand{\bGL}{\boldsymbol\GL}

\newcommand{\bRes}{\boldsymbol\Res}
\newcommand{\SOcal}{\mathcal{SO}}
\newcommand{\ps}{\mathrm{ps}}

\DeclareMathOperator{\Urm}{U}

\newcommand{\bU}{\boldsymbol\Urm}

\numberwithin{equation}{section}

\theoremstyle{plain}
\newtheorem{thm}[equation]{Theorem}
\newtheorem*{thm*}{Theorem}

\newtheorem{lem}[equation]{Lemma}
\newtheorem*{lem*}{Lemma}

\newtheorem{prop}[equation]{Proposition}

\newtheorem*{prop*}{Proposition}

\newtheorem{cor}[equation]{Corollary}
\newtheorem*{cor*}{Corollary}

\newtheorem*{clm*}{Claim}

\newtheorem*{conj*}{Conjecture}

\theoremstyle{definition}
\newtheorem{defn}[equation]{Definition}
\newtheorem*{defn*}{Definition}

\newtheorem{constr}[equation]{Construction}

\newtheorem*{ass*}{Assumption}

\theoremstyle{remark}
\newtheorem{rmk}[equation]{Remark}
\newtheorem*{rmk*}{Remark}

\newtheorem{exa}[equation]{Example}
\newtheorem*{exa*}{Example}

\numberwithin{figure}{equation}
\numberwithin{table}{equation}
\usepackage{latexsym}
\usepackage{array}

\usepackage[british]{babel}

\usepackage{amsfonts}
\usepackage{amscd}
\usepackage{amsxtra}
\usepackage{amstext}
\usepackage{amssymb}
\usepackage{pifont}
\usepackage{leftidx}
\usepackage{stmaryrd}

\usepackage{mathrsfs}
\usepackage{mathtools}
\usepackage{graphicx}
\usepackage{calc}
\usepackage{url}

\usepackage{csquotes}

\usepackage{pb-diagram,pb-xy}

\usepackage{verbatim}

\usepackage{ifthen}
\usepackage{epsfig}
\usepackage{textcomp}

\usepackage{psfrag}
\usepackage{manfnt}

\usepackage{setspace}
\usepackage{changepage}
\usepackage{layout}

\usepackage[normalem]{ulem}

\usepackage[all,cmtip]{xy}
\xyoption{matrix}

\usepackage{tikz-cd}
\usetikzlibrary{calc}
\usepackage{stackrel}
\usepackage{enumitem}

 \usepackage{xcolor}
 \usepackage{hyperref}
\hypersetup{%
  colorlinks=true,
}

\usepackage[most]{tcolorbox}

\usepackage{xspace}

\usepackage[OT2,T1]{fontenc}

\usepackage{libertine}
\usepackage[libertine]{newtxmath}

\usepackage{bbm}

\usepackage[backend=biber,style=alphabetic,natbib=true, doi=false, isbn=false, url=false]{biblatex}
\hypersetup{%
  colorlinks=true,
}

\begin{document}

\title[Convergence of orbital integrals]{Convergence of orbital integrals on unitary groups in positive characteristic} 
\author[W.~Kim]{Wansu Kim}
\address{Department of Mathematical Sciences, KAIST, 291 Daehak-ro, Yuseong-gu, Daejeon, 34141, Republic of Korea}
\email{wansu.math@kaist.ac.kr}

\author[M.~Park]{Minju~Park}
\address{Department of Mathematical Sciences, KAIST, 291 Daehak-ro, Yuseong-gu, Daejeon, 34141, Republic of Korea}
\email{minjuparkg@gmail.com}

\begin{abstract}
    We prove the absolute convergence of orbital integrals on a unitary group over a non-archimedean local field in any positive characteristic.
\end{abstract}
\maketitle

\tableofcontents

\section{Introduction}

Let $\Gbf$ be a connected reductive group over a non-archimedean local field $F$ of characteristic~$0$, with $G\coloneqq \Gbf(F)$. A fundamental object in the harmonic analysis of $G$ is an \emph{orbital integral}, which is an invariant distribution associated to a conjugacy class in $G$. For semisimple conjugacy classes, the study of orbital integrals was initiated by Harish-Chandra, and the well-definedness (i.e., absolute convergence) of orbital integrals for general conjugacy classes was established by Ranga~Rao \cite{RangaRao:OrbInt}, and independently by Deligne. The proof hinges on establishing the convergence of \emph{unipotent} orbital integrals---since the general case reduces to this via the Jordan decomposition---and utilising the exponential map to translate the problem to the Lie algebra.

However, when $F$ is a non-archimedean local field of characteristic~$p>0$, the aforementioned strategy encounters significant obstacles. While the absolute convergence of \emph{semisimple} orbital integrals follows essentially by the same arguments as in characteristic~$0$, extending this to general conjugacy classes requires substantial modification. In fact, because $F$ is imperfect, the existence of Jordan decomposition over $F$ is not guaranteed, and the exponential map is also problematic in characteristic~$p$.

Nonetheless, McNinch overcame these obstacles and proved the absolute convergence of \emph{unipotent} orbital integrals when the characteristic $p$ of the field avoids a ``small'' list of bad primes determined by the Dynkin diagram of the simple factors of $\Gbf^{\ad}$ and the order of $\pi_1(\Gbf^{\der})$. (See \cite[\S8.6, Theorem~58]{McNinch:NilpOrbits} for the precise statement; for unitary groups, this implies that unipotent orbital integrals absolutely converge in \emph{any} positive characteristic.) The absolute convergence of \emph{general} orbital integrals is also obtained, provided that $p-1$ is strictly greater than the semisimple rank of $\Gbf$. (See \cite[\S8.7, Theorem~61]{McNinch:NilpOrbits}. Note that this assumption on $p$ ensures that any element of $G$ admits a Jordan decomposition over $F$, by \cite[\S8.2, Proposition~48]{McNinch:NilpOrbits}.)

At first glance, one might assume that the absolute convergence of orbital integrals is essentially resolved, given its validity for sufficiently large characteristics. However, when working over a fixed non-archimedean local field $F$ of characteristic $p>0$, McNinch's bounds inherently restrict the applicability of his results to connected reductive groups whose semisimple rank is strictly bounded by $p-1$. Furthermore, this restriction is particularly stark in global applications; unlike number fields, where a condition on $p$ merely excludes finitely many places, all local completions of a global function field share the exact same positive characteristic.

While the aforementioned results rely on restricting the characteristic, a completely characteristic-independent approach has long been known for inner forms of $\GL_n(F)$. Building on ideas of Howe \cite{Howe:FourierTransforms-Germs-GLn}, the absolute convergence of orbital integrals on these groups was established in arbitrary characteristic by Deligne--Kazhdan--Vign\'eras \cite[\S{A.1}]{DeligneKazhdanVigneras:CenSimAlg} (see also Laumon \cite[\S4.8]{Laumon:Book1}). In particular, their approach successfully handles general conjugacy classes without requiring a Jordan decomposition over $F$. The primary goal of this paper is to extend these characteristic-independent methods from inner forms of $\GL_n(F)$ to its \emph{outer forms}---namely, unitary groups.

Here is the main result of the paper.
\begin{thm*}[Theorem~\ref{th:main}]
    Let $G$ be a unitary group over a non-archimedean local field $F$ of any characteristic. Then for any element $\gamma \in G$ and any $f\in C^\infty_c(G)$, the orbital integral $O^G_\gamma(f)$ absolutely converges.\end{thm*}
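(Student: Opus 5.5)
The strategy is to adapt the Deligne--Kazhdan--Vign\'eras/Howe argument for inner forms of $\GL_n$ to the hermitian setting, and so to avoid any Jordan decomposition over $F$.

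\emph{Set-up.} Write $G=\Urm(V,h)$ for a hermitian space $(V,h)$ over a quadratic étale $F$-algebra $E$ (or over a division algebra with involution). Fix $\gamma\in G$. The element $\gamma$ makes $V$ a module over $E[T]$ equipped with the involution $T\mapsto T^{-1}$. We decompose $V$ by the \emph{rational} primary decomposition according to the irreducible factors of the characteristic polynomial of $\gamma$, grouped into orbits under $P(T)\mapsto T^{\deg P}\bar P(T^{-1})$. This decomposition is defined over $F$ even when $F$ is imperfect, since it only uses factorisation of polynomials and needs no separability. Self-dual factors give orthogonal summands. A pair of dual factors gives a hyperbolic summand, on which the unitary group of that summand is essentially a $\GL$ over a field extension. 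The centraliser $G_\gamma$ is then a product over these pieces, and the orbital integral reduces, via parabolic descent and the Iwasawa decomposition, to products of integrals on the factors. The $\GL$-type factors are covered by \cite{DeligneKazhdanVigneras:CenSimAlg}.

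\emph{Reduction for a self-dual primary piece.} On such a piece $V$ is a module over $A=K[T]/(P^m)$, where $K=E[T]/(P)$ is a field carrying the induced involution. Following Howe, we choose the \emph{canonical filtration} by $\ker P(\gamma)^i$, or equivalently by images of $P(\gamma)^i$. This filtration is $\gamma$-stable, and it is self-dual for $h$ in the sense that $\Fil_i^\perp=\Fil_{m-i}$. It therefore defines an $F$-rational parabolic $\Pbf=\Mbf\Nbf$ of $\Gbf$ containing $\Gbf_\gamma$. The Levi $M$ is a product of $\GL$'s over $K$, coming from the graded pieces paired by duality, possibly together with a middle unitary group over $K$. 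Using $G=PK_0$, the integral $\int_{G_\gamma\backslash G}f(g^{-1}\gamma g)$ becomes an integral over $G_\gamma\backslash P$ of a compactly supported function $f^{K_0}$. We then split $G_\gamma\backslash P$ into $(G_\gamma\cap N)\backslash N$ and a quotient in $M$. The key claim is that the map $n\mapsto n^{-1}\gamma n\gamma_M^{-1}$ identifies $(G_\gamma\cap N)\backslash N$ with a closed subset of $N$, and that it contracts measures by a modulus factor. Applying the filtration steps to the successive quotients $N_i/N_{i+1}$, as in Howe's linear-algebra lemma, this produces $\delta_P(\gamma_M)^{\pm1/2}$ times an orbital integral of the image of $\gamma$ in $M$. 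The image $\gamma_M$ acts on each graded piece semisimply as multiplication by the image of $T$ in $K$, so it is central in a $\GL$ or unitary group over $K$. This reduces everything to the $\GL_n$ case already known, and to a trivial central case.

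\emph{Main obstacle.} The hardest step is to show that the orbit map on the unipotent radical is proper with image closed, and that the Jacobian is controlled. Put differently, we must verify that the hermitian condition is compatible with Howe's successive-approximation argument on $\gr^i N$. In the $\GL$ case this is the statement that $X\mapsto X-\gamma X\gamma^{-1}$ on each graded piece is surjective onto a complement of the centraliser. In the unitary case $N_i/N_{i+1}$ is cut out by a skew-hermitian condition, and in characteristic $2$ the hermitian and skew-hermitian, and trace versus norm, distinctions become delicate. I would try first to work throughout with $E$-linear data and the $\sigma$-semilinear pairing. The aim is to show the relevant linear maps are $\sigma$-equivariant, so that surjectivity on $\sigma$-fixed points follows from vanishing of $\Hrm^1(\Gal(E/F),\cdot)$ on a $K$-vector space (Hilbert 90). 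This avoids any use of $1/2$. If $E/F$ is inseparable, which cannot happen for a quadratic étale algebra, or if $E=F\times F$, the argument degenerates to the $\GL$ case.
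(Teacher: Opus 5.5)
The genuine gap is in your treatment of a self-dual primary piece, where the plan fails at two points. Separately, and more easily repaired: splitting $V$ orthogonally into several self-dual summands is not a parabolic descent, since $\prod_a\Urm(V_a,h_a)$ is not a Levi subgroup. The paper handles this step by semisimple descent (Lemma~\ref{lem:Gs}), through an $s$ central in $G_\gamma$ acting by distinct norm-one scalars on the summands.

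\emph{The filtration is not self-dual.} Since $\wp(\gamma)^\star$ is $\wp(\gamma)$ times a unit commuting with it, you get $\ker(\wp^i)^\perp=\im(\wp^i)$. For $0<i<m$, the equality $\im(\wp^i)=\ker(\wp^{m-i})$ holds only when all elementary divisors have the same exponent. So the kernel and image filtrations are dual to each other, not equal. For $V\cong E[T]/(T-1)^2\oplus E[T]/(T-1)$ in a rank-$3$ hermitian space, $\ker(\gamma-1)$ is $2$-dimensional, hence not isotropic, and your parabolic is not $F$-rational (Remark~\ref{rmk:rationality-parabolic}). The paper instead uses the isotropic flag $\Fil^jV=\sum_{k\geqslant1}(\ker\wp^k\cap\im\wp^{k+j-1})$, which interleaves the two filtrations.

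\emph{The Howe step fails.} This is the more serious problem. Take any $F$-rational parabolic containing $\Gbf_\gamma$ with closed Levi image. Getting $\delta_P^{\pm1/2}$ times an orbital integral on $M$ needs $\Ocal_P(\gamma)$ to be open in $\Ocal_M(\overline\gamma)\cdot N$, i.e.\ $\dim\Gbf_\gamma=\dim\Mbf_{\overline\gamma}$. This fails as soon as the exponents $m_i$ have both parities (Corollary~\ref{cor:openness-levi-preimage}).

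In the rank-$3$ example, every proper $F$-parabolic is a Borel, $\overline\gamma=1$, and $\dim\Gbf_\gamma=5>3=\dim\Mbf_{\overline\gamma}$. So $\Ocal_P(\gamma)$ is $1$-dimensional inside the $3$-dimensional $N$, whatever parabolic you choose (Remark~\ref{rmk:openness-levi-preimage}). Your formula would make $O^G_\gamma(f)$ a multiple of $\int_N f^{K_0}(n)\,dn$. Take a non-zero $f\geqslant0$ supported near a regular unipotent element of $N$: that integral is positive, whereas $O^G_\gamma(f)=0$.

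This obstruction is a dimension count over an algebraic closure. The Hilbert~90 and characteristic-$2$ issues you single out as the main obstacle therefore cannot repair it.

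\emph{What is missing.} You need to replace $\Ocal_M(\overline\gamma)\cdot N$ by a smaller target. The paper uses $\gamma\equiv\overline\gamma$ modulo $N_2$ and $\nfr_{[\gamma]}\cong\coker\big(1-\Ad(\overline\gamma^{-1})\mid\gfr(1)\big)$. With these it builds a closed $P$-stable submanifold $\Vcal(\gamma)\subset P$ containing $\Ocal_P(\gamma)$ as an open subset. Concretely, $\Vcal(\gamma)$ is the preimage of the subbundle of $M_{\overline\gamma}\backslash(M\ltimes\gfr(1))$ with fibres $\Ad(m^{-1})\big((1-\Ad(\overline\gamma^{-1}))\gfr(1)\big)$.

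The paper equips $\Vcal(\gamma)$ with a measure that scales by $\delta_{\Vcal_0}$ under $P$-conjugation. It compares this measure with $dm\,dn/dg_\gamma$ through the character $\delta_{[\gamma]}$, which is trivial on $G_\gamma$ (Corollary~\ref{cor:V-gamma}, Proposition~\ref{prop:measures}). Convergence then follows from the closedness of $\Vcal(\gamma)$ together with $G=PK$, with no descent to an orbital integral on $M$.
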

A natural first attempt is to adapt the proof strategy from \cite[\S{A.1}]{DeligneKazhdanVigneras:CenSimAlg}. Specifically, they observed that absolute convergence is immediate for a broader class of elements beyond just the semisimple ones—namely, \emph{closed} elements, following the terminology of \cite[Definition~4.3.1]{Laumon:Book1}. Their main idea is to associate to each $\gamma\in \GL_n(F)$ a \emph{suitable parabolic subgroup} $P=MN$ such that the absolute convergence of $O^{\GL_n(F)}_\gamma(f)$ is reduced to the absolute convergence of the orbital integral $O^M_{\overline\gamma}(f^P)$, where $\overline\gamma\in M$ is a \emph{closed} element and $f^P$ is a \emph{constant term} of $f$ in the sense of \cite[(4.1.9)]{Laumon:Book1}.

While this approach can be successfully adapted to a unitary group $G$ for \emph{regular} conjugacy classes, it encounters a severe obstruction in general. Unfortunately, the structure of parabolic subgroups is significantly more restrictive for unitary groups than for $\GL_n(F)$, and the scarcity of suitable parabolic subgroups requires a careful restructuring of the proof.

Faced with this scarcity of parabolics, we take inspiration from the nilpotent Lie algebra setting. For any nilpotent element in the Lie algebra of the unitary group $G$, there is a natural construction of an $F$-rational cocharacter---and thus an $F$-rational parabolic subgroup \cite[\S{IV.2.22}]{SpringerSteinberg:ConjClasses}. We adapt this construction and associate a natural $F$-rational parabolic to a significantly wider class of elements in the unitary group, which we introduce as \emph{primary elements} (Definition~\ref{def:primary}).

The central novelty of our approach lies in analysing the conjugacy class of a primary element $\gamma$ with respect to its associated parabolic subgroup $P$. Specifically, we construct a set $\mathcal{V}(\gamma) \subset P$ that is closed in the analytic topology and contains the $P$-conjugacy class of $\gamma$ as an open subset (see Corollary~\ref{cor:V-gamma}). While this construction takes inspiration from McNinch's work on nilpotent Lie algebra orbits \cite[\S8.1, Proposition~46]{McNinch:NilpOrbits}, extending this geometry to elements $\gamma$ that are not merely central translates of unipotent elements requires a delicate treatment. Equipped with a suitable measure on $\mathcal{V}(\gamma)$ (Construction~\ref{constr:measure}), we are then able to adapt the strategy of Ranga~Rao \cite{RangaRao:OrbInt} and McNinch \cite[\S8]{McNinch:NilpOrbits} to deduce absolute convergence.

Although the main result is already known in characteristic~$0$, our proof applies uniformly to this setting as well. Therefore, for conceptual clarity, we allow $F$ to be a non-archimedean local field of arbitrary characteristic throughout the paper.

Furthermore, we expect that the machinery developed here can be extended to other classical groups, provided the characteristic of $F$ is not $2$. The details of this generalisation will appear in subsequent work.

\subsection*{Outline of the paper} After a brief review of unitary groups over a non-archimedean local field in \S\ref{sec:review}, \S\ref{sec:elem-div} establishes the basic properties of conjugacy classes and centralisers within these groups. In \S\ref{sec:main}, we state our main result and reduce the general convergence problem to the case of \emph{primary} elements. The core geometric machinery is developed in the subsequent two sections: in \S\ref{sec:parabolic} we associate a parabolic subgroup to each primary element, and in \S\ref{sec:closure} we execute the main technical step of constructing the subset $\Vcal(\gamma)$. Finally, \S\ref{sec:proof} synthesises these tools to complete the proof of the main theorem.

\subsection*{Notation and preliminaries}
Throughout the paper, let $F$ be a non-archimedean local field, with valuation ring $\Oscr$ and residue field $\kappa$. Let $p$ denote the characteristic of $\kappa$, and $q\coloneqq |\kappa|$. We fix a uniformiser $\varpi\in\Oscr$ once and for all. We normalise the discrete valuation $v\colon F^\times\to\ZZ$ by $v(\varpi)= 1$, and the absolute value by $\lVert \varpi\rVert =  q^{-1}$. If $F$ needs to be specified, we write $\Oscr_F$, $\kappa_F$, $q_F$, $\varpi_F$, $v_F$, $\lVert\bullet\rVert_F$, etc.

We let $\cl F$ (respectively, $\scl F$) denote the algebraic (respectively, separable) closure of $F$. We implicitly embed any finite extension of $F$ into $\cl F$ (respectively, any finite separable extension of $F$ into $\scl F$).

We will use the boldface font to denote linear algebraic groups over $F$ (such as $\Gbf$, $\Hbf$, etc), and use the normal font for the topological groups of $F$-rational points; that is,  $G\coloneqq\Gbf(F)$, $H\coloneqq\Hbf(F)$, etc. We use the gothic font to denote the associated Lie algebra over $F$; that is, $\gfr\coloneqq \Lie(\Gbf)$, $\hfr\coloneqq \Lie(\Hbf)$, etc. For any field extension $F'/F$, we let $\Gbf_{F'}$ and $\gfr_{F'}$ denote the base change of $\Gbf$ and $\gfr$ over $F'$, respectively.

Let $\Hbf$ be a closed $F$-subgroup of $\Gbf$, which is smooth over $F$. On the one hand, the quotient $H\backslash G$ is an \emph{analytic manifold} over $F$ (or an \emph{analytic $F$-manifold}) in the sense of Serre \cite[Part~II, Chap.~III, \S2]{Serre:LieThy}.  We also have the scheme quotient $\Hbf\backslash\Gbf$, which is a smooth affine algebraic variety over $F$, so  $(\Hbf\backslash\Gbf)(F)$ has a natural structure of an analytic $F$-manifold, which contains $H\backslash G$.

\section{Review of unitary groups}\label{sec:review}
Let $E$ be a quadratic separable extension of non-archimedean local fields of residue characteristic $p$. We let $\Oscr'$, $\kappa'$, $q'$, $\varpi'$, $v'$, and $\lVert\bullet\rVert'$ denote the corresponding objects for $E$. We denote the non-trivial element of $\Gal(E/F)$ by $\overline{(\bullet)}$.

We fix a rank-$n$ \emph{non-degenerate hermitian space} $(V,h)$ over $E$; namely, an $n$-dimensional $E$-vector space $V$ together with a non-degenerate hermitian form 
\begin{equation}\label{eq:hermitian-form}
    h\colon V\times V\to E.
\end{equation}
(Our convention is that $h$ is $E$-linear on the \emph{second} argument and $h(w,v) = \overline{h(v,w)}$, following \cite[Ch~7, Definition~1.2]{Scharlau:Forms})
For $g\in \GL_E(V)$, let $g^\star$ denote the adjoint of $g$ with respect to $h$; that is 
\begin{equation}\label{eq:adjoint}
    h(w,gv) = h(g^\star w, v) \quad \forall v,w\in V.
\end{equation}
More explicitly, fix an $E$-basis $(e_1,\cdots,e_n)$ for $V$, and let $J\coloneqq (h(e_i,e_j))_{i,j=1,\cdots,n}$ be the Gram matrix of $h$, so we have $h(w,v) = \overline w^t J v$. Then we have
\begin{equation}\label{eq:star-formula}
    g^\star = J^{-1} \overline g^t  J.
\end{equation}

\begin{defn}
    Given a non-degenerate hermitian form $h$ on $V$, we define the \emph{unitary group} as follows:
    \begin{align*}
        \Urm(V,h)&\coloneqq \{g\in\GL_E(V)\mid h(gv,gw)=h(v,w),\ \forall v,w\in V\}\\
        &= \{g\in\GL_E(V)\mid g^\star g = 1\}.
    \end{align*}

    Then $\Urm(V,h)$ is the group of $F$-rational points of a closed $F$-subgroup $\bU(V,h)$ of $\bRes_{E/F}\bGL_E(V)$.
\end{defn}
For the remainder of the paper, we set $\Gbf \coloneqq \bU(V,h)$ and $G \coloneqq \Gbf(F)$ unless otherwise stated. We will also use the notation $\widetilde\Gbf \coloneqq \bRes_{E/F}\bGL_E(V)$ and $\Gtilde \coloneqq \widetilde\Gbf(F) = \GL_E(V)$ as needed.

Given a non-degenerate hermitian form $h$, we can associate a non-degenerate symmetric $F$-bilinear form
\begin{equation}
 \Psi_h\coloneqq\Tr_{E/F}\circ h \colon V\times V\to F.
\end{equation}
Note that $h$ and $\Psi_h$ induce the same adjoint involution $g \mapsto g^\star$ on $\End_E(V)$. Consequently, the defining condition for $G$ is equivalent to preserving $\Psi_h$; that is, $\Psi_h(gv,gw) = \Psi_h(v,w)$ for all $v,w\in V$. Since $h$ is uniquely determined by $\Psi_h$, it is often convenient to work with the bilinear space $(V,\Psi_h)$ instead of $(V,h)$.

\begin{rmk}
Since $F$ is a non-archimedean local field, any central simple $E$-algebra equipped with an involution restricting to $\overline{(\bullet)}$ on $E$ is \emph{split}; see \cite[Ch.~10, Theorem~(2.2)]{Scharlau:Forms}. 

Moreover, if the characteristic of $F$ is not $2$, we can pass between hermitian and skew-hermitian forms without altering the adjoint involution. Indeed, by choosing a non-zero element $\eta\in E$ with $\overline\eta = -\eta$, the form $h'(v,w)\coloneqq h(v,\eta w)$ defines a non-degenerate skew-hermitian form with the same adjoint involution as $h$. If the characteristic of $F$ is $2$, then any alternating hermitian form $h\colon V\times V\to E$ necessarily satisfies $h(v,w) = \overline{h(w,v)}$.

Consequently, it suffices to restrict our attention to non-degenerate hermitian spaces over $E$.
\end{rmk}

Since we have a natural isomorphism $V\otimes_FE \cong V\times \overline V$, where $\overline V$ is the scalar extension of $V$ by $\overline{(\bullet)}$, we have $\widetilde\Gbf_E\cong \bGL_E(V)\times\bGL_E(\overline V)$. One can show without difficulty that the composition of the following maps
\begin{equation}\label{eq:unitary-BC-to-GL}
    \begin{tikzcd}
        \Gbf_E \arrow[r, hook] & \widetilde\Gbf_E \cong \bGL_E(V)\times\bGL_E(\overline V) \arrow[r, two heads, "\pr"] & \bGL_E(V)
    \end{tikzcd}
\end{equation}
is an isomorphism. In particular, one can naturally identify $\Gbf(E)$ with $\GL_E(V)$.

\section{Centralisers and stable conjugacy classes in unitary groups}\label{sec:elem-div}
Given $\gamma\in\GL_E(V)$, one can naturally define an $E[T]$-module structure on $V$ by letting $T$ act via $\gamma$. This is a standard and powerful tool for analysing the structure of conjugacy classes and centralisers of $\gamma$ in $\GL_E(V)$. In this section, we refine this approach to extract the structural properties of conjugacy classes and centralisers of a unitary group $G=\Urm(V,h)$. 

We continue with the notation from \S\ref{sec:review}, where $\Gbf = \bU(V,h)$ and $G=\Gbf(F)$.

\begin{defn}
    For $\gamma\in G$, let $\Gbf_\gamma \subset \Gbf$ denote the centraliser of $\gamma$ as a group scheme over $F$, and write $G_\gamma\coloneqq\Gbf_\gamma(F)$. Note that $G_\gamma \subset G$ coincides with the centraliser of $\gamma\in G$ as a topological group.
\end{defn}

To study $\Gbf_\gamma$ for $\gamma\in G$, we first analyse its base change $\Gbf_{\gamma,E}$ to $E$. In fact, centralisers in $\Gbf_E\cong \GL_E(V)$ are well understood (see \cite[Ch~4]{Laumon:Book1}), and the formation of centralisers commutes with base change (i.e., $\Gbf_{\gamma,E}\cong \Gbf_{E,\gamma}$).

Fix an element $\gamma\in \Gbf(E) \cong \GL_E(V)$, and view $V$ as an $E[T]$-module where $T$ acts via $\gamma$. Then, by the structure theorem for finitely generated $E[T]$-modules, one can write
\begin{equation}
    V \cong \bigoplus_\wp V\{\wp\}
\end{equation}
where $\wp\in E[T]$ are monic irreducible polynomials and $V\{\wp\}\cong \bigoplus_i E[T]/\wp^{m_i}$ is the $\wp$-primary part of $V$. Furthermore, for any $E$-algebra $R$, we have a natural isomorphism 
\begin{equation}\label{eq:GL-centraliser}
    \Gbf_{E,\gamma}(R) \cong \End_{R[T]}(V\otimes_ER)^\times \cong  (C_\gamma\otimes_ER)^\times,
\end{equation}
where $C_\gamma\coloneqq \End_{E[T]}(V)$ is the \emph{commutant} of $\gamma$ in $\End_E(V)$. 

Let us now describe the $E$-rational unipotent radical $\Rscr_{u,E}\Gbf_{E,\gamma}$ and the maximal pseudo-reductive quotient $\Gbf_{E,\gamma}^{\ps}\coloneqq \Gbf_{E,\gamma}/\Rscr_{u,E}\Gbf_{E,\gamma}$. 

For an irreducible polynomial $\wp\in E[T]$, we also let $E_\wp\coloneqq E[T]/\wp$, which is a finite extension of $E$. Then the maximal semisimple quotient $C_\gamma/\rad(C_\gamma)$ of $C_\gamma$ can be written as follows.
\begin{eqnarray}
    C_\gamma/\rad(C_\gamma) = \prod_\wp\End_{E_\wp}(V\{\wp\}/\wp V\{\wp\})
\end{eqnarray}
It then follows that $\Gbf_{E,\gamma}^{\ps}$ is the unit group of $C_\gamma/\rad(C_\gamma)$, which is isomorphic to
\begin{equation}\label{eq:ps-red-qt-centraliser-GL}
    \Gbf_{E,\gamma}^\ps \cong \prod_\wp \bRes_{E_\wp/E}\bGL_{E_\wp}(V\{\wp\}/\wp V\{\wp\}).
\end{equation}

Next, we have 
\begin{equation}\label{eq:rat-unip-rad-centraliser-GL}
    \Rscr_{u,E}\Gbf_{E,\gamma}(\cl F) \cong \ker \left(\Gbf_{E,\gamma}(\cl F)\to \Gbf_{E,\gamma}^{\ps}(\cl F)\right) = 1+\rad(C_\gamma)\otimes_E\cl F;
\end{equation}
that is, $\Rscr_{u,E}\Gbf_{E,\gamma}$ corresponds to the multiplicative group $1+\rad(C_\gamma)$. Furthermore, the filtration $\left( 1+\rad(C_\gamma)^i \right)_{i>0}$ of $1+\rad(C_\gamma)$ induces a filtration of $\Rscr_{u,E}\Gbf_{E,\gamma}$, whose  successive quotients are vector groups over $E$. This shows that $\Rscr_{u,E}\Gbf_{E,\gamma}$ is a smooth connected $E$-split unipotent group.  

Let us now describe the $F$-rational structure of $\Gbf_\gamma$ for $\gamma\in G=\Urm(V,h)$. Firstly, $h\colon V\times V \to E$ induces the following $E$-linear isomorphism
\begin{equation}\label{eq:h-hat}
    \hat h\colon
    \begin{tikzcd}
        V \arrow[r,"\cong"] &\overline V^\vee
    \end{tikzcd} \quad \text{with }\hat h(w)\coloneqq h(w,-)\colon V \to E.
    \end{equation} 
    where $\overline V^\vee$ is the scalar extension of $V^\vee$ by $\overline{(\bullet)}\in\Gal(E/F)$. (Recall that $h$ is $E$-linear on the \emph{second} argument.) We endow $\overline V^\vee$ with the contragradient action of $\Gbf(E)=\GL_E(V)$; that is, for any $g\in \GL_E(V)$ and $l\in \overline V^\vee$ we set 
    \begin{equation}\label{eq:contragradient}
        g\cdot l \colon 
        \begin{tikzcd}
            V \arrow[r,"g^{-1}"] & V \arrow[r,"l"] & E
        \end{tikzcd}.
    \end{equation}
    Then we have 
    \[
    \hat h (gw) = (g^\star)^{-1}\hat h(w), \quad \forall g\in \Gbf(E), \forall w\in V,\]
    so \emph{$h$ commutes with $\gamma\in \GL_E(V)$ if and only if $\gamma^\star \gamma=1$}; ie, $\gamma\in G$. Thus, if we endow $V$ and $\overline V^\vee$ with an $E[T]$-module structure by letting $T$ act via $\gamma\in G$, then $\hat h\colon V\to \overline V^\vee$ is an $E[T]$-module isomorphism.
    
    To describe $\overline V^\vee\{\wp\} = \hat h (V\{\wp\})$, we introduce the following:
\begin{defn}
    For a monic polynomial $f(T) \in E[T]$ of degree $d$ with $f(0) \ne 0$, we define the \emph{dual polynomial} as
    \[
    f^\vee(T) \coloneqq f(0)^{-1} T^d \cdot f(T^{-1}).
    \]
    We also define  $\overline f(T)$ by applying $\overline{(\bullet)}$ to each coefficient of $f(T)$. 
\end{defn}
For two monic polynomials $f_1,f_2\in E[T]$ with non-zero constant terms, we have $(f_1f_2)^\vee = f_1^\vee\cdot f_2^\vee$ and $\overline{f_1f_2} = \overline{f_1}\cdot\overline{f_2}$. In particular, if $\wp$ is a monic irreducible polynomial, then so are $\overline\wp$, $\wp^\vee$, and $\overline\wp^\vee$. If $f(T)$ is the characteristic polynomial of $\gamma\in\GL_E(V)$, then  $\overline f^\vee(T)$ is the characteristic polynomial of $(\gamma^{-1})^\star$. 

\begin{prop}\label{prop:elem-div-self-dual}
    Fix $\gamma\in G$, and endow $V$ and $\overline V^\vee$ with an $E[T]$-module structure via $\gamma$. Let $\wp\in E[T]$ be a monic irreducible polynomial. Then, we have 
    \[
    \overline V^\vee \{\overline\wp^\vee\} = \overline{V\{\wp\}}^\vee;
    \]
    in other words, a semilinear functional $l\in \overline V^\vee$ is $\overline\wp^\vee$-primary if and only if we have $l( V\{\wp'\}) = 0 $ for any monic irreducible polynomial $\wp'\ne \wp$.
\end{prop}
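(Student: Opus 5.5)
The plan is to compare two direct sum decompositions of $\overline V^\vee$: one obtained by dualising the primary decomposition $V=\bigoplus_{\wp'}V\{\wp'\}$, and the intrinsic primary decomposition of the $E[T]$-module $\overline V^\vee$. Since $\gamma$ and $\gamma^{-1}$ preserve each summand $V\{\wp'\}$, restriction of functionals gives a decomposition
\[
\overline V^\vee=\bigoplus_{\wp'}\overline{V\{\wp'\}}^\vee ,
\]
where $\overline{V\{\wp'\}}^\vee$ is identified with the semilinear functionals vanishing on $\bigoplus_{\wp''\neq\wp'}V\{\wp''\}$. Each summand is stable under the contragredient action \eqref{eq:contragradient}, hence is an $E[T]$-submodule. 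By uniqueness of the primary decomposition of a finitely generated torsion $E[T]$-module, it then suffices to prove two things:
\begin{itemize}
\item[(a)] $\overline{V\{\wp'\}}^\vee$ is $\overline{\wp'}^\vee$-primary;
\item[(b)] the assignment $\wp'\mapsto\overline{\wp'}^\vee$ is injective on monic irreducible polynomials with non-zero constant term.
\end{itemize}
Point (b) is immediate, because the operation $f\mapsto\overline f^\vee$ is an involution; this uses $(f^\vee)^\vee=f$ and the fact that conjugation commutes with taking duals. Given (a) and (b), the summand $\overline{V\{\wp\}}^\vee$ is exactly the $\overline\wp^\vee$-primary part, which is the claim.

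For (a), the key step is a formula for the action of a polynomial on a functional. Elements $l\in\overline V^\vee$ are conjugate-linear, and the $E$-structure on $\overline V^\vee$ is twisted by $\overline{(\bullet)}$. From this one checks that for $f\in E[T]$ we have
\[
f(T)\cdot l = l\circ \overline f(\gamma^{-1}),
\]
with the conjugation appearing exactly once. Applying this to $f=\overline\wp^\vee$ gives $\overline\wp^\vee(T)\cdot l = l\circ \wp^\vee(\gamma^{-1})$. Now write $d=\deg\wp$. By definition of the dual polynomial,
\[
\wp^\vee(\gamma^{-1}) = \wp(0)^{-1}\gamma^{-d}\wp(\gamma).
\]
This is $\wp(\gamma)$ multiplied by the unit $\wp(0)^{-1}\gamma^{-d}$, and that unit commutes with $\wp(\gamma)$. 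Hence, for every $m$,
\[
\overline\wp^\vee(T)^m\cdot l = l\circ u\,\wp(\gamma)^m
\]
for some automorphism $u$ of $V$ preserving each primary summand. If $l$ vanishes outside $V\{\wp\}$, then choosing $m$ so large that $\wp(\gamma)^m$ kills $V\{\wp\}$ gives $\overline\wp^\vee(T)^m\cdot l=0$. This proves (a), and the reverse inclusion follows from the uniqueness argument above.

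The only delicate point I expect is the semilinearity bookkeeping in the displayed formula: getting the placement of the conjugation right, consistent with the convention that $h$ is linear in the second variable and with how scalars act on $\overline V^\vee$. I would verify it on scalars $a\in E$ and on $T$ separately, and then extend multiplicatively and additively. Everything else is formal.
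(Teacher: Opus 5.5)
Your proof is correct. Its computational core is the same as in the paper: the formula $f(T)\cdot l=l\circ\overline f(\gamma^{-1})$ combined with $\wp^\vee(\gamma^{-1})=\wp(0)^{-1}\gamma^{-d}\wp(\gamma)$. The difference lies in how you get the reverse inclusion.

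The paper reads the identity $\overline\wp^\vee(T)^m\cdot l=l\circ u\,\wp(\gamma)^m$, with $u$ a unit commuting with $\wp(\gamma)$, as an equivalence. It says that $l$ is killed by $(\overline\wp^\vee)^m$ if and only if $l$ vanishes on $\im\wp(\gamma)^m$. For $m$ large this image is exactly $\bigoplus_{\wp'\neq\wp}V\{\wp'\}$, because $\wp(\gamma)$ is nilpotent on $V\{\wp\}$ and invertible on every other primary summand. Both inclusions therefore come out of one line, with no need for your step (b) or for uniqueness of the primary decomposition.

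Your route is more structural. It shows that dualising the primary decomposition of $V$ gives the primary decomposition of $\overline V^\vee$, reindexed by the involution $\wp'\mapsto\overline{\wp'}^\vee$. The cost is those two extra ingredients, both of which you justify correctly.

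On the bookkeeping you flagged: the conjugation must enter exactly once. There are two equivalent ways to set this up.
\begin{itemize}
\item Use $E$-linear functionals on $V$, with scalars acting on $\overline V^\vee$ via $a\cdot l=\overline a\,l$. This is what $\hat h(w)=h(w,-)$ produces under the paper's conventions.
\item Use conjugate-linear functionals with the untwisted scalar action.
\end{itemize}
Your sentence literally combines both twists, which would cancel the conjugation and yield $\wp^\vee$ instead of $\overline\wp^\vee$. The formula you actually display is the correct one.
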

\begin{proof}
    Recall that the action of $\gamma\in G$ on $l\in \overline V^\vee$ is given as follows
    \[\forall v\in V,\quad (\gamma\cdot l)(v)\coloneqq  l(\gamma^{-1}v) = l(\gamma^\star v)\qquad \text{(see \eqref{eq:contragradient}).}\]
    Thus, for any $f(T)\in E[T]$, we have 
    \[\forall v\in V,\quad \left( f(\gamma)\cdot l \right)(v) = l\big( \overline f(\gamma^{-1})v \big) = l\big( \overline f(\gamma^\star)v \big).\]

    Let $d_\wp\coloneqq\deg\wp$. Observe that $\overline \wp(\gamma^{-1}) = \overline\wp(0)\gamma^{-d_\wp}\overline\wp^\vee(\gamma)$, and $\overline\wp(0)\gamma^{-d_\wp}$ is invertible. Therefore, it follows that $l\in\overline V^\vee$ is annihilated by $\overline\wp^\vee(\gamma)^m$ for some $m$ if and only if $l(\wp(\gamma)^m v)=0$ for any $v\in V$. Now, since $\wp(\gamma)$ acts invertibly on $V\{\wp'\}$ for any monic irreducible $\wp' \ne \wp$, the proposition follows.
\end{proof}

\begin{subequations}\label{eq:elem-div}
In the same setting as Proposition~\ref{prop:elem-div-self-dual}, choose a finite index set $\Sigma$ and a monic irreducible polynomial $\wp_a\in E[T]$ for each $a\in \Sigma$ such that $V\{\wp_a\}\ne 0$ and we have
\begin{equation}\label{eq:elem-div:Sigma}
    V = \bigoplus_{a\in\Sigma}V_a,\quad\text{where }V_a\coloneqq V\{\wp_a\}+V\{\overline\wp_a^\vee\}.
\end{equation} 
Let $\Sigma_{nd}$ be the subset of $\Sigma$ consisting of indices $a$ such that $\wp_a=\overline\wp_a^\vee$, and set $\Sigma_{h}\coloneqq \Sigma\setminus\Sigma_{nd}$. Then, by definition we have
\begin{equation}
    V_a = \begin{cases}
        V\{\wp_a\} & \text{if }a\in\Sigma_{nd};\\
        V\{\wp_a\}\oplus V\{\overline\wp_a^\vee\} & \text{if }a\in\Sigma_h.
    \end{cases}
\end{equation}
For each $a\in\Sigma$, we write
\begin{equation}\label{eq:elem-div:Va}
    V\{\wp_a\} \cong \bigoplus_{i} \Big( E[T]/\wp_a^{m_{a,i}} \Big)^{r_{a,i}} \quad \text{and}\quad V\{\overline\wp_a^\vee\} \cong \bigoplus_{i} \Big( E[T]/{\overline\wp_a^{\vee }}^{m_{\overline a,i}} \Big)^{r_{\overline a,i}}.
\end{equation}
Here, we order the exponents as $m_{a,1}>m_{a,2}>\cdots>0$, and  $r_{a,i}$ denotes the multiplicity. We apply the same convention to $m_{\overline {a},i}$'s and $r_{\overline a,i}$'s if $a\in\Sigma_h$ and $V\{\overline\wp_a^\vee\}\ne0$. Using this notation, we record some consequences of Proposition~\ref{prop:elem-div-self-dual}.
\end{subequations}

\begin{cor}\label{cor:elem-div-self-dual}
    Using the above notation, the following properties are valid.
    \begin{enumerate}
        \item\label{cor:elem-div-self-dual:isotropic} For any $a\in \Sigma_h$, we have $V\{\overline\wp_a^\vee\}\ne0$, with $m_{a,i} = m_{\overline a,i}$ and $r_{a,i} = r_{\overline a,i}$ for each $i$. Furthermore, both $V\{\wp_a\}$ and $V\{\overline\wp_a^\vee\}$ are totally isotropic, and $h$ restricts to a non-degenerate hermitian form $h_a$ on $V_a$.
        \item\label{cor:elem-div-self-dual:nondeg} For any $a\in\Sigma_{nd}$, $h$ restricts to a non-degenerate hermitian form $h_a$ on~$V_a = V\{\wp_a\}$. 
        \item\label{cor:elem-div-self-dual:ortho-decomp}
        We have $(V,h) = \bigoplus_{a\in\Sigma}(V_a,h_a)$, where the direct sum is orthogonal.
        Furthermore, the  involution $\star$ on $C_\gamma$ sends $\End_{E[T]}\big(V\{\wp_a\}\big)$ onto $\End_{E[T]}\big(V\{\overline\wp_a^\vee\}\big)$ and vice versa. 
        \item\label{cor:elem-div-self-dual:centraliser-ps} $\Gbf_\gamma^{\ps}$ is a product of Weil restrictions of general linear groups and unitary groups. More precisely, its group of $F$-points is
        \[G_\gamma^\ps \cong \prod_{a\in\Sigma_h} \GL_{E_a}\left( V\{\wp_a\}/\wp_a V\{\wp_a\} \right)\times \prod_{a\in\Sigma_{nd}}\Urm_{E_a}\left( V_a/\wp_a V_a\right),\]
        where $E_a \coloneq E[T]/\wp_a$ and $\Urm_{E_a}\left( V_a/\wp_a V_a \right) \coloneq \left\{g\in \GL_{E_\wp}(V\{\wp\}/\wp V\{\wp\})\mid g^\star g = 1\right\}$. Here, $\star$ is the involution on $C_\gamma/\rad(C_\gamma)$ induced by $\star$ on $C_\gamma$.
    \end{enumerate}
\end{cor}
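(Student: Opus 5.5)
The whole statement comes from one orthogonality principle, which I would derive from Proposition~\ref{prop:elem-div-self-dual}. Since $\hat h\colon V\to\overline V^\vee$ is an $E[T]$-module isomorphism, it maps $V\{\wp\}$ isomorphically onto $\overline V^\vee\{\wp\}$. By the proposition, $\overline V^\vee\{\wp\}$ consists of the functionals vanishing on $V\{\wp'\}$ for all $\wp'\ne\overline\wp^\vee$. Hence:
\begin{center}
$h\big(V\{\wp\},V\{\wp'\}\big)=0$ unless $\wp'=\overline\wp^\vee$.
\end{center}

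\textbf{Parts (1)--(3).} For $a\ne b$ in $\Sigma$, the primary parts making up $V_a$ and $V_b$ are paired to zero by the orthogonality principle. This gives the orthogonal decomposition in (3). Since the sum is orthogonal, non-degeneracy of $h$ forces each $h_a$ to be non-degenerate, which gives (2) and the corresponding claim in (1).

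For $a\in\Sigma_h$ we have $\wp_a\ne\overline\wp_a^\vee$, so the orthogonality principle shows $V\{\wp_a\}$ is orthogonal to itself, i.e.\ totally isotropic; the same holds for $V\{\overline\wp_a^\vee\}$. If $V\{\overline\wp_a^\vee\}$ were zero, then $V\{\wp_a\}\neq 0$ would lie in the radical of $h_a$, a contradiction. Thus $\hat h$ restricts to an $E[T]$-isomorphism
\[
V\{\wp_a\}\;\cong\;\overline{V\{\overline\wp_a^\vee\}}^\vee .
\]
Dualising a module $E[T]/\wp^m$ (contragredient action twisted by conjugation) gives $E[T]/(\overline\wp^\vee)^m$. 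Comparing elementary divisors therefore yields $m_{a,i}=m_{\overline a,i}$ and $r_{a,i}=r_{\overline a,i}$.

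For the last assertion of (3), take $x\in C_\gamma$. Since $\gamma^\star=\gamma^{-1}$, $x^\star$ commutes with $\gamma$, so $\star$ preserves $C_\gamma$. Suppose $x$ is supported on $V\{\wp_a\}$, meaning it kills the other primary parts and has image in $V\{\wp_a\}$. From $h(w,xv)=h(x^\star w,v)$ and the orthogonality principle, $x^\star$ kills everything except $V\{\overline\wp_a^\vee\}$ and has image in $V\{\overline\wp_a^\vee\}$. So $\star$ exchanges the two endomorphism rings, as claimed.

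\textbf{Part (4).} Here $G_\gamma=\{x\in C_\gamma^\times : x^\star x=1\}$, and the $\star$-stable decomposition $C_\gamma=\prod_\wp\End_{E[T]}(V\{\wp\})$ induces a $\star$-stable decomposition of $C_\gamma/\rad(C_\gamma)$. The radical is $\star$-stable, since $\star$ is an anti-automorphism.

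For $a\in\Sigma_h$, the condition $x^\star x=1$ on the factor for $\wp_a$ and $\overline\wp_a^\vee$ says the $\overline\wp_a^\vee$-component is $(x_a^\star)^{-1}$. So the unitary units there are the units of a single factor, giving $\GL_{E_a}(V\{\wp_a\}/\wp_aV\{\wp_a\})$. For $a\in\Sigma_{nd}$ the factor is $\star$-stable and gives the unitary group defined in the statement.

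It remains to identify $\Gbf_\gamma^{\ps}$ with the fixed points on the quotient. I would do this by checking that $\Rscr_{u,E}(\Gbf_{\gamma,E})$ is defined over $F$, so that it gives the $F$-unipotent radical. Then I would show surjectivity on $F$-points, $G_\gamma\to(C_\gamma/\rad)^{\star\text{-unitary}}$, by lifting successively through the filtration $1+\rad^i$.

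\textbf{Main obstacle.} I expect that last step to be the hardest: the graded pieces are vector groups with an involution, and one needs Hilbert~90 style vanishing for the twisted forms, $H^1(\Gal(E/F),-)=0$ for vector groups. This holds even in characteristic~$2$. The identification of the involution on each factor also needs care.
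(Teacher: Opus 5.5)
Your arguments for (1)--(3) are the paper's. Both derive everything from $\hat h$ being an $E[T]$-isomorphism with $\hat h(V\{\wp\})=\overline V^\vee\{\wp\}$, combined with Proposition~\ref{prop:elem-div-self-dual}. Your ``orthogonality principle'', the radical argument for $V\{\overline\wp_a^\vee\}\neq 0$, the computation that the conjugate dual of $E[T]/\wp^m$ is $E[T]/(\overline\wp^\vee)^m$, and the adjoint argument for the last claim of (3) make explicit what the paper calls straightforward, and they are correct.

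For (4) you also follow the paper's route: take fixed points of $\bar x\mapsto(\bar x^\star)^{-1}$ on $(C_\gamma/\rad(C_\gamma))^\times$ and compute factor by factor. The step you call the main obstacle is not needed for the statement. Once $1+\rad(C_\gamma)$ descends to $\Rscr_{u,F}\Gbf_\gamma$, Galois descent gives $\Gbf_\gamma^{\ps}(F)=\Gbf_\gamma^{\ps}(E)^{\Gal(E/F)}$. Here $\Gbf_\gamma^{\ps}(E)=(C_\gamma/\rad(C_\gamma))^\times$, since units lift modulo a nilpotent ideal. So $G_\gamma^{\ps}=\{\bar x:\bar x^\star\bar x=1\}$ directly. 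Your d\'evissage with additive Hilbert~90 proves something extra (and true): $G_\gamma\to G_\gamma^{\ps}$ is surjective.

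The genuine gap is the sentence ``giving $\GL_{E_a}(V\{\wp_a\}/\wp_aV\{\wp_a\})$'' and its unitary analogue. It silently uses $\End_{E[T]}(V\{\wp\})/\rad\cong\End_{E_\wp}(V\{\wp\}/\wp V\{\wp\})$, i.e.\ the earlier display for $C_\gamma/\rad(C_\gamma)$. The paper's proof rests on the same display, but it fails as soon as $V\{\wp\}$ has two distinct exponents.

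Counterexample: take $\gamma$ of type $E[T]/(T-1)^2\oplus E[T]/(T-1)$, realised in a rank-$3$ unitary group as in Remark~\ref{rmk:rationality-parabolic}. In the basis $e_1=(\gamma-1)e_2,\ e_2,\ e_3$ the commutant is $\left\{\left(\begin{smallmatrix}a&b&c\\0&a&0\\0&d&e\end{smallmatrix}\right)\right\}$. It is $5$-dimensional with $3$-dimensional radical $\{a=e=0\}$. Hence $C_\gamma/\rad(C_\gamma)\cong E\times E$, not $\End_E(V/(T-1)V)\cong M_2(E)$, and $\Gbf_\gamma^{\ps}$ is a product of two rank-$1$ unitary groups, not a rank-$2$ one.

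In general $\End_{E[T]}(V\{\wp\})/\rad\cong\prod_i M_{r_i}(E_\wp)$. There is one factor per exponent $m_i$, acting on the multiplicity space $(\ker\wp\cap\im\wp^{m_i-1})/(\ker\wp\cap\im\wp^{m_i})$. So (4), as stated and as you (and the paper) prove it, holds only when every $V\{\wp_a\}$ is isotypic.

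The correct version has, for each $i$:
\begin{itemize}
\item a factor $\GL_{r_{a,i}}(E_a)$ when $a\in\Sigma_h$;
\item a unitary group in $r_{a,i}$ variables, for $E_a$ over its $\star$-fixed subfield, when $a\in\Sigma_{nd}$.
\end{itemize}
For $a\in\Sigma_{nd}$ you must also check that $\star$ preserves each simple factor rather than permuting factors with equal $r_{a,i}$. This holds: if $x\in C_\gamma$ projects onto an isotypic summand $(E[T]/\wp_a^{m})^{r}$, then $x^\star$ is a projection with image $(\ker x)^\perp\cong(E[T]/\wp_a^{m})^{r}$, using $\wp_a=\overline\wp_a^\vee$. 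So $x^\star$ maps to the same central idempotent of $C_\gamma/\rad(C_\gamma)$.

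The qualitative claim survives: $\Gbf_\gamma^{\ps}$ is a product of Weil restrictions of general linear and unitary groups, which is all Proposition~\ref{prop:unitary-centralisers} uses.
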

\begin{proof}
    Since $\hat h\colon V\riso \overline V^\vee$ is an $E[T]$-module isomorphism, it restricts to an isomorphism on $\wp_a$-primary parts for each $a\in \Sigma$. Since $\overline V^\vee\{\wp_a\}$ is the conjugate-dual of $V\{\overline\wp_a^\vee\}$ by Proposition~\ref{prop:elem-div-self-dual}, we have $V\{\overline\wp_a^\vee\}\ne 0$ and the equalities $m_{a,i}=m_{\overline a,i}$ and $r_{a,i} = r_{\overline a,i}$ follow for each $i$.
    
    If $a\in\Sigma_h$ so $\wp_a\ne\overline\wp_a^\vee$, then the previous paragraph implies that $V\{\wp_a\}$ and $V\{\overline\wp_a^\vee\}$ are hyperbolic pair of totally isotropic subspaces of $V$. This proves (\ref{cor:elem-div-self-dual:isotropic}).

    If $a\in\Sigma_{nd}$ so $\wp_a = \overline\wp_a^\vee$, then $\hat h$ restricts to $V\{\wp_a\}\riso \overline{V\{\wp_a\}}^\vee$, so $h$ restricts to a non-degenerate hermitian form $h_a$ on $V\{\wp\}$. This proves (\ref{cor:elem-div-self-dual:nondeg}), and claim (\ref{cor:elem-div-self-dual:ortho-decomp}) is now straightforward.

    To prove (\ref{cor:elem-div-self-dual:centraliser-ps}), note that 
    \[G_\gamma = \Gbf_\gamma(E)\cap G = C_\gamma^\times\cap G = \{g\in C_\gamma^\times\mid g^\star g=1\}.\]
    And since $\rad(C_\gamma)$ is stable under $\star$, the adjoint involution $\star$ defines an involution on $C_\gamma/\rad(C_\gamma)$, and we have \[G_\gamma^\ps = \{g\in (C_\gamma/\rad(C_\gamma))^\times\mid g^\star g=1\}.\]
    On the direct factor $\GL_{E_a}(V\{\wp_a\}/\wp_a V\{\wp_a\})\times \GL_{E_{\overline a}}(V\{\overline\wp_a^\vee\}/\overline\wp_a^\vee V\{\overline\wp_a^\vee\})$ for $a\in \Sigma_h$ where $E_{\overline a}\coloneqq E[T]/\overline\wp_a^\vee$, the condition $g^\star g= 1$ defines a subgroup that projects isomorphically onto the first factor by the same argument as \eqref{eq:unitary-BC-to-GL}. On the direct factor $\GL_{E_a}(V_a/\wp_a V_a)$ for $a\in \Sigma_{nd}$, the condition $g^\star g = 1$ defines a unitary subgroup as $\star$ does not fix $E_a$. This proves (\ref{cor:elem-div-self-dual:centraliser-ps}).
\end{proof}

We are now ready to obtain the following proposition.

\begin{prop}\label{prop:unitary-centralisers}
    For any $\gamma\in G$, the following properties hold for $\Gbf_\gamma$.
    \begin{enumerate}
        \item\label{prop:unitary-centralisers:sm-conn} $\Gbf_\gamma$ is smooth and geometrically connected over $F$.
        \item\label{prop:unitary-centralisers:unip-rad} The $F$-rational unipotent radical $\Rscr_{u,F}\Gbf_\gamma$ is smooth, connected, and $F$-split.
        \item\label{prop:unitary-centralisers:Gal-coho} $\Hrm^1(F,\Gbf_\gamma)$ is finite, and $\Hrm^1(E,\Gbf_\gamma)$ is trivial. In particular, the inflation map $\Hrm^1(E/F,\Gbf_\gamma(E))\to\Hrm^1(F,\Gbf_\gamma)$ is bijective.
        \item\label{prop:unitary-centralisers:unimodular} The topological group $G_\gamma$ is unimodular.
    \end{enumerate}
\end{prop}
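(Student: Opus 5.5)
The plan is to obtain (\ref{prop:unitary-centralisers:sm-conn})–(\ref{prop:unitary-centralisers:unip-rad}) by base change to $E$, where everything is explicit, and then to descend along $E/F$. After that, (\ref{prop:unitary-centralisers:Gal-coho}) and (\ref{prop:unitary-centralisers:unimodular}) should follow from the structure of $\Gbf_\gamma$ as an extension of a reductive group by a split unipotent group.

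For (\ref{prop:unitary-centralisers:sm-conn}), note that smoothness and geometric connectedness can be checked after the base change $\Gbf_{\gamma,E}\cong\Gbf_{E,\gamma}$. By \eqref{eq:GL-centraliser}, the latter is the unit group scheme $R\mapsto (C_\gamma\otimes_E R)^\times$ of the finite-dimensional $E$-algebra $C_\gamma$. This is a nonempty open subscheme of the affine space $\underline{C_\gamma}$, defined by non-vanishing of the reduced norm or determinant, so it is smooth and geometrically irreducible.

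For (\ref{prop:unitary-centralisers:unip-rad}), I would produce the $F$-form of $1+\rad(C_\gamma)$ directly. The radical $\rad(C_\gamma)$ is $\star$-stable, as used in the proof of Corollary~\ref{cor:elem-div-self-dual}. Hence the subgroup $\Ubf\subset\Gbf_\gamma$ defined by $g\in 1+\rad(C_\gamma)$, $g^\star g=1$ is an $F$-form of $1+\rad(C_\gamma)$. It is normal, and the quotient $\Gbf_\gamma/\Ubf$ has $E$-base change equal to the reductive group \eqref{eq:ps-red-qt-centraliser-GL}, hence is reductive. Therefore $\Ubf=\Rscr_{u,F}\Gbf_\gamma$, and indeed $\Ubf=\Rscr_u$ geometrically. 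This makes $\Ubf$ smooth and connected, since its base change to $E$ is.

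For $F$-splitness, use the $\star$-stable filtration $1+\rad(C_\gamma)^i$ as a descent of the filtration. Each graded piece is an $F$-form of the vector group $\rad^i/\rad^{i+1}$, with Galois twist by a semilinear involution. Concretely, the $F$-points form the $F$-subspace $\{x\in \rad^i/\rad^{i+1}: x^\star=-x\}$, with the group law made additive via the Cayley-type map $g\mapsto g-1$ modulo higher terms. This is a vector group over $F$, since forms of $\GG_a^m$ twisted through $\GL_m$ are trivial by Hilbert 90. Hence $\Ubf$ is $F$-split. I expect this graded-piece identification to be the main obstacle, in particular in characteristic~$2$, where $x^\star=-x$ needs care. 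I would try first to phrase it purely as Galois descent of the $E$-vector group $\rad^i/\rad^{i+1}$ with the semilinear action $x\mapsto -\bar{x}^\star$, which avoids any division by $2$.

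For (\ref{prop:unitary-centralisers:Gal-coho}), split $\gamma$-cohomology along $1\to\Ubf\to\Gbf_\gamma\to\Gbf_\gamma^{\ps}\to1$. The group $\Hrm^1$ of a split unipotent group vanishes, including its twists, since these are still split. Hence $\Hrm^1(F,\Gbf_\gamma)\to\Hrm^1(F,\Gbf_\gamma^\ps)$ is injective; it is in fact bijective, and likewise over $E$. By Corollary~\ref{cor:elem-div-self-dual}(\ref{cor:elem-div-self-dual:centraliser-ps}), $\Gbf_\gamma^\ps$ is a product of Weil restrictions of $\GL$'s and unitary groups. Shapiro's lemma and Hilbert 90 kill the $\GL$ factors. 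Unitary groups over local fields have finite $\Hrm^1$, classifying hermitian forms of fixed rank (discriminant in $F_a^\times/\mathrm{N}E_a^\times$). Over $E$, every factor becomes a Weil restriction of $\GL$, so $\Hrm^1(E,\Gbf_\gamma)=1$. The inflation–restriction sequence for nonabelian $\Hrm^1$ then gives bijectivity of the inflation map.

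For (\ref{prop:unitary-centralisers:unimodular}), the modular character of $G_\gamma$ is $|\det\Ad|$ on $\Lie\Gbf_\gamma$. Over $E$, $\Lie\Gbf_{E,\gamma}=C_\gamma$ with conjugation action. Conjugation by $g$ on the algebra $C_\gamma$ has determinant $1$, since it is left multiplication by $g$ composed with right multiplication by $g^{-1}$. These two maps have determinants $N(g)^{\pm1}$ in the reduced-norm sense, and the two norms coincide because $C_\gamma$ is a finite-dimensional algebra, where left and right regular norms agree for units (both equal $\det$ of the action on the algebra, up to transposition via $\star$). Therefore $\det\Ad=1$ on $G_\gamma$, and $G_\gamma$ is unimodular.
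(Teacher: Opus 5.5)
Your argument for (\ref{prop:unitary-centralisers:unimodular}) has a genuine gap. It rests on the claim that left and right multiplication $L_g,R_g$ have the same determinant on the units of any finite-dimensional algebra, and this is false. In the algebra of upper-triangular $2\times2$ matrices, $a=\mathrm{diag}(x,y)$ has $\det(L_a)=x^2y$ but $\det(R_a)=xy^2$; the ratio $x/y$ is exactly the modular character of a Borel subgroup. The general principle holds for semisimple algebras, so it tells you nothing about $C_\gamma$ precisely when $\gamma$ is not closed, which is the only case that matters.

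Invoking $\star$ does not repair this. Because $\star$ is an $E$-semilinear anti-automorphism, it gives $\star\circ L_g=R_{g^\star}\circ\star$. For $g\in G_\gamma$ this yields only $\det_E(R_{g^{-1}})=\overline{\det_E(L_g)}$, hence $\det_E(\Ad(g)\mid C_\gamma)=N_{E/F}(\det_E L_g)$, which need not have absolute value $1$. For instance, for the split hermitian plane the upper-triangular algebra is $\star$-stable, and $g=\mathrm{diag}(t,\overline t^{-1})$ gives $N_{E/F}(t)$.

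What is missing is an input specific to $C_\gamma=\End_{E[T]}(V)$. Both determinants are trivial on $1+\rad(C_\gamma)$, so you may take $g$ block-diagonal with respect to $V\{\wp\}\cong\bigoplus_k(E[T]/\wp^{m_k})^{r_k}$. The two determinants then agree because $\dim_E\Hom_{E[T]}(E[T]/\wp^a,E[T]/\wp^b)=d_\wp\min(a,b)$ is symmetric in $a,b$. This is the content of \cite[Lemma~(4.8.6)]{Laumon:Book1}, which the paper invokes for $\Gbf_\gamma(E)$ and then transports to $G_\gamma$ via $\delta_{G_\gamma}(g)=\lVert\det(\Ad_{\gfr_\gamma}(g))\rVert^{-1}$.

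A second error, in (\ref{prop:unitary-centralisers:unip-rad}), is repairable: the group \eqref{eq:ps-red-qt-centraliser-GL} is not reductive in general. When $\wp$ is inseparable over $E$, $\bRes_{E_\wp/E}\bGL_{E_\wp}$ is only pseudo-reductive and has non-trivial geometric unipotent radical. This is exactly the situation without a rational Jordan decomposition that motivates the paper. So your $\Ubf$ is \emph{not} $\Rscr_u$ geometrically. The identification $\Ubf=\Rscr_{u,F}\Gbf_\gamma$ survives once ``reductive'' is replaced by ``pseudo-reductive'': any smooth connected unipotent normal $F$-subgroup then has trivial image in $\Gbf_\gamma/\Ubf$. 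Similarly, in (\ref{prop:unitary-centralisers:Gal-coho}) Shapiro's lemma must be applied to possibly inseparable Weil restrictions, which is why the paper cites \cite[Lemma~4.1.6]{Conrad:Finiteness-Sha}.

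Apart from these points your route is sound, and in places more direct than the paper's. Your proof of (\ref{prop:unitary-centralisers:sm-conn}), via $C_\gamma^\times$ being open in an affine space, avoids the appeal to connectedness of Weil restrictions \cite[Proposition~A.5.9]{ConradGabberPrasad:PRedGp2ed}. Your descent of the graded pieces $\rad(C_\gamma)^i/\rad(C_\gamma)^{i+1}$ under $x\mapsto -x^\star$ proves $F$-splitness explicitly and works in characteristic $2$. The paper instead uses that splitness is insensitive to separable base change \cite[Theorem~B.2.5]{ConradGabberPrasad:PRedGp2ed}.
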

\begin{proof}
     Since $E/F$ is separable, we have $(\Rscr_{u,F}\Gbf_\gamma)_E\cong \Rscr_{u,E}\Gbf_{E,\gamma}$, which is shown to be a smooth connected $F$-split unipotent group using the description \eqref{eq:rat-unip-rad-centraliser-GL}. This implies (\ref{prop:unitary-centralisers:unip-rad}). (Note that the splitness of unipotent groups is insensitive of finite separable base change by \cite[Theorem~B.2.5]{ConradGabberPrasad:PRedGp2ed}.) 

    By Corollary~\ref{cor:elem-div-self-dual}(\ref{cor:elem-div-self-dual:centraliser-ps}), $\Gbf_\gamma^\ps$ is a product of Weil restrictions of connected reductive groups, so $\Gbf_\gamma^\ps$ is \emph{connected} by \cite[Proposition~A.5.9]{ConradGabberPrasad:PRedGp2ed}. Now (\ref{prop:unitary-centralisers:sm-conn}) follows since both $\Gbf_\gamma^\ps$ and $\Rscr_{u,F}\Gbf_\gamma$ are smooth and geometrically connected.

    By the $F$-splitness of $\Rscr_{u,F}\Gbf_\gamma$ and a simple d\'evissage argument, we get
    \begin{equation}
        \begin{tikzcd}
            \Hrm^1(F,\Gbf_\gamma) \arrow[r,"\cong"] & \Hrm^1(F,\Gbf_\gamma^\ps),
        \end{tikzcd}
    \end{equation}
    and the target of the isomorphism is a finite pointed set by the non-abelian Shapiro lemma for any finite Weil restriction of scalars \cite[Lemma~4.1.6]{Conrad:Finiteness-Sha} and the finiteness of the first Galois cohomology of any connected reductive group. Furthermore, the same argument together with \eqref{eq:ps-red-qt-centraliser-GL} and non-abelian Hilbert~Satz~90 shows that $\Hrm^1(E,\Gbf_\gamma)$ is trivial, so the inflation map $\Hrm^1(E/F,\Gbf_\gamma(E))\to\Hrm^1(F,\Gbf_\gamma)$ is bijective. This proves (\ref{prop:unitary-centralisers:Gal-coho}).

    To prove (\ref{prop:unitary-centralisers:unimodular}), note that the modulus character $\delta_{G_\gamma}$ can be computed as follows:
    \[
        \delta_{G_\gamma}(g) = \lVert \det\big(\Ad _{\gfr_\gamma}(g)\big) \rVert ^{-1}, \quad \forall g\in G_\gamma,
    \]
    where $\gfr_\gamma\coloneqq\Lie(\Gbf_\gamma)$, and $\Ad_{\gfr_\gamma}(g)$ denotes the adjoint action of $g$ on $\gfr_\gamma$ (see \cite[Chap.~III, \S3, Cor of Prop~55]{Bourbaki:Lie2-3} for details). Therefore, to show that $G_\gamma$ is unimodular, it suffices to show that $\Gbf_\gamma(E)$ is unimodular, which is proved in \cite[Lemma~(4.8.6)]{Laumon:Book1}. This proves (\ref{prop:unitary-centralisers:unimodular}).
\end{proof}

\begin{defn}
    For $\gamma\in G$, let $\Ocal_G(\gamma)$ denote the conjugacy class of $\gamma$ in $G$. Let $c_\gamma\colon\Gbf\to\Gbf$ denote the conjugation map $g\mapsto g^{-1}\gamma g$. This induces a locally closed immersion of analytic manifolds
    \[
        c_\gamma \colon 
        \begin{tikzcd}
            G_\gamma\backslash G \arrow[r,hook] & G,
        \end{tikzcd}
    \]
    which is a homeomorphism onto its image $\Ocal_G(\gamma)$.

    Similarly, viewing $\gamma$ as an element of $\Gbf(E)$, we can consider its conjugacy class $\Ocal_{\Gbf(E)}(\gamma)$. We define the \emph{stable conjugacy class} of $\gamma$ in $G$ by
    \[
        \SOcal_G(\gamma)\coloneqq \Ocal_{\Gbf(E)}(\gamma)\cap G.
    \]
\end{defn}
Since $\Gbf_\gamma(E) \cong \GL_E(V)$, where the notions of conjugacy and stable conjugacy coincide, our definition of stable conjugacy class agrees with the standard one when $F$ has characteristic~$0$ (see \cite[\S3, pp.~788--789]{Kottwitz:RatlConjRedGp}). 
Furthermore, we will see in Proposition~\ref{prop:stable-conjugacy} that $\SOcal_G(\gamma)$ satisfies the analogous property to a stable conjugacy class in characteristic~$0$.

Before stating the proposition, let us recall the following standard fact on conjugacy and stable conjugacy classes of $\gamma\in\Gbf(E) \cong \GL_E(V)$.
\begin{lem}\label{lem:ConjCl-GL}
    For any field extension $E'/E$, an element $\gamma'\in\Gbf(E')$ belongs to $\Ocal_{\Gbf(E')}(\gamma)$ if and only if the $E'[T]$-module structures on $V\otimes_E E'$ defined by $\gamma$ and $\gamma'$ are isomorphic. In particular, if $E'/E$ is a Galois extension, then we have 
    \[
    \Ocal_{\Gbf(E)}(\gamma) = \Ocal_{\Gbf(E')}(\gamma)^{\Gal(E'/E)} = \Ocal_{\Gbf(E')}(\gamma) \cap \Gbf(E)\]
    as a subset of $\Gbf(E')$.
\end{lem}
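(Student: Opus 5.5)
The plan is to prove the two parts of Lemma~\ref{lem:ConjCl-GL} separately: first the module-theoretic criterion, then the Galois descent statement as a consequence of it.

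\textbf{The criterion.} Suppose $\gamma' = g^{-1}\gamma g$ with $g\in\GL_{E'}(V\otimes_EE')$. Then $g$ intertwines the two actions: writing $V_{\gamma}$ and $V_{\gamma'}$ for $V\otimes_EE'$ with $T$ acting via $\gamma$ and $\gamma'$, we have $g\gamma' = \gamma g$, so $g\colon V_{\gamma'}\to V_\gamma$ is an $E'[T]$-module isomorphism. Conversely, an $E'[T]$-module isomorphism $\phi\colon V_{\gamma'}\riso V_\gamma$ is in particular an $E'$-linear automorphism of $V\otimes_EE'$. It satisfies $\phi\gamma'=\gamma\phi$, and hence $\gamma'=\phi^{-1}\gamma\phi$ with $\phi\in\Gbf(E')$.

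\textbf{The descent statement.} The inclusion $\Ocal_{\Gbf(E)}(\gamma)\subseteq \Ocal_{\Gbf(E')}(\gamma)\cap\Gbf(E)$ is trivial, and the equality $\Ocal_{\Gbf(E')}(\gamma)\cap\Gbf(E) = \Ocal_{\Gbf(E')}(\gamma)^{\Gal(E'/E)}$ holds because $\Gbf(E')^{\Gal(E'/E)}=\Gbf(E)$ for $E'/E$ Galois. For the remaining inclusion, take $\gamma'\in\Gbf(E)$ conjugate to $\gamma$ over $E'$. By the criterion, $V_\gamma\otimes_EE'\cong V_{\gamma'}\otimes_EE'$ as $E'[T]$-modules, and I must deduce $V_\gamma\cong V_{\gamma'}$ as $E[T]$-modules. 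I would use that a finitely generated torsion $E[T]$-module $M$ is determined up to isomorphism by the dimensions
\[
\dim_E\ker f(T)^k|_M \quad\text{for monic irreducible } f\in E[T] \text{ and } k\ge1.
\]
These dimensions recover the elementary divisors: the number of summands $E[T]/f^m$ with $m\ge k$ equals $(\dim\ker f^k-\dim\ker f^{k-1})/\deg f$. Each $f(\gamma)^k$ is a matrix with entries in $E$, so the dimension of its kernel is invariant under the extension of scalars $E\to E'$. Since $f(\gamma)^k$ and $f(\gamma')^k$ are conjugate over $E'$, their ranks agree, and therefore the invariants of $V_\gamma$ and $V_{\gamma'}$ coincide.

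An alternative route avoids kernel dimensions. The module $V_{\gamma}$ is determined by its invariant factors, which are the quotients of successive gcds of the minors of $T-\gamma$. These gcds are unchanged under field extension, because gcd computations in $E[T]$ are compatible with passage to $E'[T]$.

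The only point needing care is this descent step, showing that isomorphism after scalar extension implies isomorphism over $E$. It is handled by either of the field-independent invariants above, and in particular no separability hypothesis is needed for this direction.
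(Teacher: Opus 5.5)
Your proposal is correct and follows the same route as the paper. The first claim is the observation that $g$ intertwines the two $E'[T]$-module structures exactly when it conjugates one of $\gamma,\gamma'$ to the other, and the descent statement is deduced from that criterion. The paper's ``this also implies'' leaves implicit that an isomorphism of the $E[T]$-modules after scalar extension to $E'$ already exists over $E$. You justify this step explicitly, via the ranks of the $E$-matrices $f(\gamma)^k$ or via determinantal divisors, and neither argument needs separability.
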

\begin{proof}
    Let $M$ (respectively,  $M'$) denote the $E'[T]$-modules with underlying $E'$-vector space $V\otimes_EE'$ induced by $\gamma$ (respectively, $\gamma'$). Then $g\in \Gbf(E') \cong \GL_{E'}(V\otimes_EE')$ induces an $E'[T]$-isomorphism $M\riso M'$ if and only if we have $\gamma'g = g\gamma$, which proves the first claim. This also implies $\Ocal_{\Gbf(E)}(\gamma) \supseteq \Ocal_{\Gbf(E')}(\gamma) \cap \Gbf(E)$, and the reverse inclusion is trivial.
\end{proof}

\begin{subequations}
    \addtocounter{equation}{-1}
\begin{prop}\label{prop:stable-conjugacy}
The stable conjugacy class $\SOcal_G(\gamma)$ is the set of $F$-points of a locally closed subvariety of $\Gbf$ which is smooth over $F$, so $\SOcal_G(\gamma)$ is a locally closed analytic $F$-submanifold of $G$. Furthermore, there is a continuous surjection
\begin{equation}\label{eq:stable-conjugacy}
    \begin{tikzcd}
    \SOcal_G(\gamma) \arrow[r, two heads] & \ker\left(\Hrm^1\big(E/F, \Gbf_\gamma(E)\big) \to \Hrm^1\big(E/F,\Gbf(E)\big) \right)
\end{tikzcd}
\end{equation}
where the target is endowed with the discrete topology, such that each fibre is a single conjugacy class in $G$. In particular, $\SOcal_G(\gamma)$ is a finite disjoint union of conjugacy classes in $G$, where each conjugacy class is an open and closed $F$-submanifold.
\end{prop}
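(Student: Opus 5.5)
The plan is to realise $\SOcal_G(\gamma)$ as the $F$-points of a homogeneous variety, and then to run the usual Galois-cohomological classification of rational orbits inside it, using that everything splits over $E$.

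\textbf{The variety.} Let $\mathbf{X}\coloneqq \Gbf_\gamma\backslash\Gbf$. By Proposition~\ref{prop:unitary-centralisers}(\ref{prop:unitary-centralisers:sm-conn}), $\Gbf_\gamma$ is smooth, so $\mathbf{X}$ is a smooth $F$-variety. The orbit map $g\mapsto g^{-1}\gamma g$ induces a monomorphism $\mathbf{X}\to\Gbf$ whose image is the orbit $\mathbf{O}$ of $\gamma$. This image is locally closed, since orbits of algebraic group actions are. Because $\Gbf_\gamma$ is smooth and equals the scheme-theoretic stabiliser, the map $\mathbf{X}\to \mathbf{O}$ (with reduced structure) is an isomorphism onto a smooth locally closed subvariety.

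\textbf{Identifying the $F$-points.} I claim $\mathbf{O}(F)=\SOcal_G(\gamma)$.
\begin{itemize}
\item[($\supseteq$)] Any element of $\Ocal_{\Gbf(E)}(\gamma)\cap G$ lies in $\mathbf{O}(E)\cap\Gbf(F)=\mathbf{O}(F)$.
\item[($\subseteq$)] If $\gamma'\in\mathbf{O}(F)$, then $\gamma'$ is $\Gbf(\cl F)$-conjugate to $\gamma$ inside $\Gbf_{\cl F}\cong\bGL_{\cl F}(V\otimes\cl F)$. Lemma~\ref{lem:ConjCl-GL} identifies conjugacy over $\cl F$ with isomorphism of the associated $\cl F[T]$-modules. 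Rational canonical form (isomorphism of $E[T]$-modules descends, since invariant factors are field-independent) then gives $\GL_E(V)$-conjugacy, i.e.\ $\gamma'\in\Ocal_{\Gbf(E)}(\gamma)$.
\end{itemize}
Hence $\SOcal_G(\gamma)=\mathbf{O}(F)$ is a locally closed analytic submanifold of $G$, by the implicit function theorem applied to the smooth variety $\mathbf{O}$.

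\textbf{The map to cohomology.} For $\gamma'\in\SOcal_G(\gamma)$, choose $g\in\Gbf(E)$ with $\gamma'=g^{-1}\gamma g$, and send $\gamma'$ to the cocycle $\sigma\mapsto g\,\sigma(g)^{-1}$, where $\sigma$ is the nontrivial element of $\Gal(E/F)$.
\begin{itemize}
\item This cocycle lies in $\Gbf_\gamma(E)$, since $\sigma(\gamma')=\gamma'$ and $\sigma(\gamma)=\gamma$.
\item Its class in $\Hrm^1(E/F,\Gbf(E))$ is trivial.
\item Changing $g$ to $cg$ with $c\in\Gbf_\gamma(E)$ changes the cocycle by a coboundary, so the class is independent of the choice of $g$.
\item $G$-conjugating $\gamma'$ replaces $g$ by $gx$ with $x\in G$, which leaves the cocycle unchanged.
\end{itemize}
So we get a well-defined map from $G$-conjugacy classes to the kernel.

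\emph{Surjectivity.} Take a class in the kernel, represented by $c=g\sigma(g)^{-1}$ with $g\in\Gbf(E)$ and $c\in\Gbf_\gamma(E)$. Then $\gamma'\coloneqq g^{-1}\gamma g$ is $\sigma$-fixed, hence lies in $G$, and it maps to the class of $c$.

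\emph{Injectivity on classes.} Suppose $g_1\sigma(g_1)^{-1}$ and $g_2\sigma(g_2)^{-1}$ are cohomologous in $\Gbf_\gamma(E)$. Then one can adjust $g_2$ by an element of $\Gbf_\gamma(E)$ so that the two cocycles coincide. It follows that $g_1^{-1}g_2$ is $\sigma$-fixed, hence lies in $G$, and it conjugates $\gamma'_1$ to $\gamma'_2$.

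\textbf{Finiteness and topology.} Finiteness of the target follows from Proposition~\ref{prop:unitary-centralisers}(\ref{prop:unitary-centralisers:Gal-coho}), since this kernel sits inside $\Hrm^1(E/F,\Gbf_\gamma(E))\cong\Hrm^1(F,\Gbf_\gamma)$, which is finite.

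\emph{Continuity} (i.e.\ local constancy) of the map is the main obstacle. I would argue that the orbit map $\Gbf(E)\to\mathbf{O}(E)$ is a submersion, because $\mathbf{X}\to\mathbf{O}$ is an isomorphism of smooth varieties and $\Gbf\to\mathbf{X}$ is smooth. The $p$-adic implicit function theorem then gives local analytic sections $s$ near $\gamma'$. For $\gamma''$ near $\gamma'$, the cocycle $s(\gamma'')\sigma(s(\gamma''))^{-1}$ varies continuously in $\Gbf_\gamma(E)$. Cohomology classes of cocycles in $\Gbf_\gamma(E)$ are locally constant: a nearby $\sigma$-cocycle differs by a coboundary, since $\Hrm^1$ of a small open subgroup of the smooth group $\Gbf_\gamma$ vanishes, e.g.\ by a contraction/Hensel argument applied to the smooth map $x\mapsto x^{-1}c\sigma(x)$.

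Alternatively, I could avoid the continuity issue directly. Each fibre is a $G$-orbit. $G$-orbits in $\mathbf{O}(F)=\mathbf{X}(F)$ are open, because $G\to\mathbf{X}(F)$ is open: it is a submersion of analytic manifolds, being induced by the smooth morphism $\Gbf\to\mathbf{X}$. There are finitely many orbits, so each is also closed. That gives continuity for the discrete topology, and yields the final assertion.
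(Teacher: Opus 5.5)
Your proof is correct. It has the same skeleton as the paper's: realise $\SOcal_G(\gamma)$ as the $F$-points of $\mathbf{X}=\Gbf_\gamma\backslash\Gbf$, classify the $G$-orbits there by $\ker\big(\Hrm^1(E/F,\Gbf_\gamma(E))\to\Hrm^1(E/F,\Gbf(E))\big)$, and take finiteness from Proposition~\ref{prop:unitary-centralisers}(\ref{prop:unitary-centralisers:Gal-coho}). Three sub-steps are done differently.

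\emph{Identifying the $F$-points.} To show every $F$-point of the orbit is $\Gbf(E)$-conjugate to $\gamma$, you descend $\GL$-conjugacy from $\cl F$ to $E$ via invariant factors. The paper instead uses $\Hrm^1(E,\Gbf_\gamma)=1$ (Hilbert~90, Shapiro, split unipotent radical) to get $(\Gbf_\gamma\backslash\Gbf)(E)=\Gbf_\gamma(E)\backslash\Gbf(E)$. Your route is more elementary.

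\emph{The orbit--cohomology bijection.} You verify it by hand rather than quoting Serre's non-abelian exact sequence. Your cocycle $g\sigma(g)^{-1}$ is the one that genuinely lies in $\Gbf_\gamma(E)$ for the convention $\gamma'=g^{-1}\gamma g$; the paper's $g^{-1}\overline g$ lies in the centraliser of $\gamma'$ instead.

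\emph{The topology.} The paper asserts that the quotient topology on the finite set $\Hrm^1$ is Hausdorff, hence discrete, and that $\delta$ is continuous. Your second argument is cleaner: $G$-orbits in $\mathbf{X}(F)$ are open because orbit maps are submersions, hence clopen by finiteness. It also makes explicit the analytic input the paper leaves implicit, namely local sections coming from smoothness. For an orbit through a point $x$ other than the base point, apply this to $g\mapsto xg$. That map is the quotient map for the centraliser of $c_\gamma(x)\in G$, which is again smooth by Proposition~\ref{prop:unitary-centralisers}(\ref{prop:unitary-centralisers:sm-conn}).

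Rely on that second argument and discard the first one as written. The claim that $\Hrm^1$ of a small open subgroup of $\Gbf_\gamma(E)$ vanishes fails for congruence subgroups when $E/F$ is wildly ramified. For example, take $\gamma$ regular unipotent in a rank-$2$ unitary group. Then $\Gbf_\gamma$ is the product of the norm-one torus and $\GG_a$, with $\sigma$ acting on $\GG_a(E)=E$ by conjugation. In characteristic~$2$ with $E/F$ ramified, the cocycles in $\varpi^k\Oscr'$ form $\varpi^k\Oscr$, while the coboundaries form $\varpi^k\Tr_{E/F}(\Oscr')\subseteq\varpi^{k+1}\Oscr$. Hence $\Hrm^1(\Gal(E/F),\varpi^k\Oscr')\neq0$ for every $k$. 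What the smoothness of $x\mapsto x^{-1}c\sigma(x)$ actually gives is the weaker statement that cocycles sufficiently close to $c$ are cohomologous to $c$, and that is all you need.
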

\begin{proof}
    Since $\Gbf_\gamma$ is smooth, the conjugation map 
    $c_\gamma\colon \Gbf_\gamma\backslash\Gbf\to\Gbf$ is an unramified monomorphism of smooth affine $F$-varieties, so it is a locally closed immersion. Thus, to show the first claim on $\SOcal_G(\gamma)$ it suffices to show that $c_\gamma$ induces a homeomorphism
    \begin{equation}\label{eq:stable-conjugacy:scheme-qt}
        c_\gamma\colon\begin{tikzcd}
       (\Gbf_\gamma\backslash\Gbf)(F) \arrow[r,"\cong"] & \SOcal_G(\gamma) 
    \end{tikzcd}.
    \end{equation}
    In fact, by smoothness of $\Gbf_\gamma$ we have a natural bijection
    \[
    \begin{tikzcd}
    \Gbf_\gamma(\scl F)\backslash\Gbf(\scl F) \arrow[r]& (\Gbf_\gamma\backslash\Gbf)(\scl F),
    \end{tikzcd}
    \]
    which in turn restricts to the following bijection by the triviality of $\Hrm^1(E,\Gbf_\gamma)$ (Proposition~\ref{prop:unitary-centralisers}(\ref{prop:unitary-centralisers:Gal-coho})): 
    \[\begin{tikzcd}
        \Gbf_\gamma(E)\backslash\Gbf(E) \arrow[r,"\cong"] &(\Gbf_\gamma\backslash\Gbf)(E)
    \end{tikzcd}.\]
    Now, we have 
    \begin{align*}
        \SOcal_G(\gamma) &= \Ocal_{\Gbf(E)}(\gamma) \cap G = c_\gamma\left( (\Gbf_\gamma\backslash\Gbf)(E) \right) \cap G 
        \\&= c_\gamma\left( (\Gbf_\gamma\backslash\Gbf)(E)\right)^{\Gal(E/F)} = c_\gamma\left( (\Gbf_\gamma\backslash\Gbf)(F) \right).
    \end{align*}
    In other words, $c_\gamma$ induces the homeomorphism \eqref{eq:stable-conjugacy:scheme-qt}, as desired.

    The long exact sequence of pointed sets for the non-abelian Galois cohomology \cite[I.~\S5.4, Proposition~36]{Serre:GalCohom} yields
    \begin{equation}\label{eq:stable-conjugacy:les}
        1\to G_\gamma \backslash G \to (\Gbf_\gamma\backslash\Gbf)(F) \xrightarrow{\delta} \Hrm^1\left(E/F,\Gbf_\gamma(E)\right) \to \Hrm^1\left(E/F,\Gbf(E)\right).
    \end{equation}
    The boundary map $\delta$ can be described as follows. If $g\in \Gbf(E)$ such that $\Gbf_\gamma(E) g$ is $\Gal(E/F)$-stable, then we define the following $1$-cocycle
    \[
    \Gal(E/F)\to\Gbf_\gamma(E);\qquad \overline{(\bullet)}\mapsto g^{-1}\cdot\overline g,
    \]
    which is independent of the choice of a coset representative up to $1$-coboundary. This construction defines a map $\delta$. Furthermore, one can directly show that 
    \[\delta(\Gbf_\gamma(E)g) = \delta(\Gbf_\gamma(E)g') \text{ if and only if } g' = gh \text{ for some }h\in G\]
    (see \cite[\S3]{Kottwitz:RatlConjRedGp}).

    Now, the natural topology on $\Hrm^1(E/F,\Gbf_\gamma(E))$ induced from the analytic topology on $\Gbf_\gamma(E)$ is Hausdorff (the subgroup of $1$-coboundaries is closed in the group of $1$-cocycles), so it is discrete by finiteness. As $\delta$ is continuous by construction, each fibre of $\delta$ is open and closed. Therefore, by precomposing $c_\gamma^{-1}$ from \eqref{eq:stable-conjugacy:scheme-qt} with $\delta$, we obtain the continuous surjection \eqref{eq:stable-conjugacy}. Lastly, any fibre of $\delta$ is an $G$-orbit in $(\Gbf_\gamma\backslash\Gbf)(F)$ for the right translation, which corresponds to a $G$-conjugacy class in $\SOcal_G(\gamma)$ via $c_\gamma$. This concludes the proof.
\end{proof}
\end{subequations}

The following terminology was introduced in \cite[Definition~4.3.1]{Laumon:Book1} for $\gamma\in\GL_E(V)$, which we extend to unitary groups.
\begin{defn}\label{def:closed}
    We say that $\gamma\in G$ is \emph{closed} if $\Ocal_G(\gamma)$ is a closed subset of $G$ for the analytic topology; or equivalently, if $c_\gamma\colon G_\gamma\backslash G\to G$ is a proper morphism (i.e., the preimage of a compact subset is compact). 
    
    Similarly, we say that $\gamma\in \Gbf(E)$ is \emph{closed} if $\Ocal_{\Gbf(E)}(\gamma)$ is a closed subset of $\Gbf(E)$ for the analytic topology.
\end{defn}

\begin{lem}\label{lem:closed}
    \begin{enumerate}
        \item\label{lem:closed:GL} An element $\gamma\in\Gbf(E)\cong \GL_E(V)$ is closed if and only if the minimal polynomial of $\gamma$ is a product of distinct monic irreducible polynomials in $E[T]$; or equivalently, $\Gbf_{E,\gamma}$ is pseudo-reductive.
        \item\label{lem:closed:unitary} An element $\gamma \in G$ is closed if it is closed as an element of $\Gbf(E)$.
        \item\label{lem:closed:ss} If $\gamma\in G$ is semisimple, then $\gamma$ is closed both as an element of $G$ and $\Gbf(E)$.
    \end{enumerate}
\end{lem}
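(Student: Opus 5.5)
Part (\ref{lem:closed:GL}) is the known result for general linear groups, and I would take it from \cite[\S4.3]{Laumon:Book1}. Here is the outline. If the minimal polynomial $\mu$ of $\gamma$ is squarefree, then $V$ is a semisimple $E[T]$-module, namely a direct sum of copies of the fields $E_\wp$. By Lemma~\ref{lem:ConjCl-GL}, $\gamma'\in\GL_E(V)$ is conjugate to $\gamma$ if and only if $\mu(\gamma')=0$ and $\gamma'$ has the same characteristic polynomial as $\gamma$. Indeed, a module annihilated by a squarefree polynomial is semisimple, so it is determined by its characteristic polynomial. Both conditions are Zariski-closed, so the orbit is closed.

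Conversely, suppose some $\wp^m$ with $m\ge2$ occurs. I would build a cocharacter $\lambda\colon\GG_m\to\GL_E(V)$ such that $\lambda(t)\gamma\lambda(t)^{-1}$ converges, as $t\to0$, to an element $\gamma_0$ whose module structure is the associated graded of the $\wp$-adic filtration. Then $\gamma_0$ is a non-conjugate element in the closure of the orbit, because it has the same characteristic polynomial but a different module structure. The equivalence with pseudo-reductivity then follows from \eqref{eq:rat-unip-rad-centraliser-GL}: the unipotent radical is trivial exactly when $\rad(C_\gamma)=0$, and this happens exactly when every $m_i=1$.

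For (\ref{lem:closed:unitary}), assume $\Ocal_{\Gbf(E)}(\gamma)$ is closed in $\Gbf(E)$. Then $\SOcal_G(\gamma)=\Ocal_{\Gbf(E)}(\gamma)\cap G$ is closed in $G$. Proposition~\ref{prop:stable-conjugacy} says $\SOcal_G(\gamma)$ is a finite disjoint union of $G$-conjugacy classes, and each of them is open and closed in $\SOcal_G(\gamma)$. So $\Ocal_G(\gamma)$ is closed in a closed subset of $G$, hence closed in $G$. I expect no difficulty here.

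For (\ref{lem:closed:ss}), the point is that a semisimple $\gamma\in G$ is still semisimple when viewed in $\Gbf(E)\cong\GL_E(V)$. This holds because the identification \eqref{eq:unitary-BC-to-GL} is an isomorphism of algebraic groups over $E$, and semisimplicity is a geometric notion that is preserved by isomorphisms and base change. A semisimple element of $\GL_E(V)$ has separable minimal polynomial over $\cl F$ in the geometric sense: it is diagonalisable over $\cl F$, so its minimal polynomial has distinct roots in $\cl F$. In particular that minimal polynomial is squarefree over $E$, so (\ref{lem:closed:GL}) shows $\gamma$ is closed in $\Gbf(E)$, and (\ref{lem:closed:unitary}) then shows it is closed in $G$.

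The main obstacle is the converse direction of (\ref{lem:closed:GL}) when $E_\wp/E$ is inseparable. In that case "squarefree minimal polynomial" does not mean "semisimple", so the degeneration has to be carried out with respect to the $\wp$-adic filtration and not an eigenspace decomposition. I would handle this by splitting the filtration $\wp^iV\{\wp\}$ by $E$-linear complements and letting $\lambda$ act with weight $i$ on the $i$-th graded piece. Note that only the forward direction of (\ref{lem:closed:GL}) is needed for (\ref{lem:closed:unitary}) and (\ref{lem:closed:ss}).
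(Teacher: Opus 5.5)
Your proposal is correct and follows the paper's proof. For (1) both cite \cite[Lemma~(4.3.2)(i)]{Laumon:Book1}, and your sketch of both directions is sound: the orbit is described by Zariski-closed conditions when the minimal polynomial is squarefree, and otherwise one degenerates along a cocharacter splitting the $\wp$-adic filtration. For (2) you use, as the paper does, that $\Ocal_G(\gamma)$ is open and closed in the closed set $\SOcal_G(\gamma)=\Ocal_{\Gbf(E)}(\gamma)\cap G$, and for (3) you feed the separable minimal polynomial of a semisimple element into (1) and then (2).
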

\begin{proof}
    Claim~(\ref{lem:closed:GL}) follows from \cite[Lemma~(4.3.2)(i)]{Laumon:Book1}. (Note that $\Gbf_{E,\gamma}$ is pseudo-reductive if and only if the commutant $C_\gamma\subset \End_E(V)$ of $\gamma$ is semisimple, which is equivalent to the minimal polynomial of $\gamma$ being a product of distinct irreducible polynomials -- see \eqref{eq:ps-red-qt-centraliser-GL} and the discussions above it.)

    Let $\gamma\in G$ be an element such that $\Ocal_{\Gbf(E)}(\gamma)$ is closed in $\Gbf(E)$. Then $\SOcal_G(\gamma) = \Ocal_{\Gbf(E)}(\gamma) \cap G$ is closed in $G$ since $G$ is a closed subgroup of $\Gbf(E)$. Since $\Ocal_G(\gamma)\subset\SOcal_G(\gamma)$ is open and closed by Proposition~\ref{prop:stable-conjugacy}, it follows that $\Ocal_G(\gamma)$ is closed in $G$, which proves (\ref{lem:closed:unitary}).

    Lastly, $\gamma\in \Gbf(E)\cong \GL_E(V)$ is semisimple if and only if its minimal polynomial is separable. So a semisimple element of $G$ is a closed element of  $\Gbf(E)$ by (\ref{lem:closed:GL}), so it is a closed element of $G$ by (\ref{lem:closed:unitary}). This proves (\ref{lem:closed:ss}).
\end{proof}

\begin{rmk}
    It turns out that the converse of Lemma~\ref{lem:closed}(\ref{lem:closed:unitary}) also holds, but we do not need this property.
\end{rmk}

\section{Main results}\label{sec:main}
We are now in a position to formally define orbital integrals in our setting and state the main result of this paper. After observing that absolute convergence is immediate for \emph{closed} elements, we state the general absolute convergence theorem for arbitrary elements in unitary groups (see Theorem~\ref{th:main}). We then perform a preliminary reduction step, showing that the general convergence problem can be reduced to the case of \emph{primary} elements in the sense of Definition~\ref{def:primary}.

Let us first recall the definition of orbital integrals. For convenience in reduction steps, we define orbital integrals on  more general groups than unitary groups $\Urm(V,h)$ and general linear groups $\GL_E(V)$. 

Let $\Gbf$ be a connected reductive group over $F$ such that $\Gbf^{\der}$ is \emph{simply connected} and that the centraliser $G_{\gamma}$ of any $\gamma\in G= \Gbf(F)$ is \emph{unimodular}. 

Suppose that we have
\begin{equation}\label{eq:prod-group}
    \Gbf\coloneqq \prod_{a\in \Sigma} \Gbf_a.\quad \text{and}\quad G\coloneqq \prod_{a\in \Sigma} G_a.
\end{equation}
where $\Sigma$ is a finite set, and $\Gbf_a$ for $a\in \Sigma$ is either a general linear group or a unitary group over $F$. (This is the setting relevant to the later discussion in this paper.) Note that any $\gamma\in G$ can be written as $\gamma = (\gamma_a)_{a\in\Sigma}$, and the centraliser $G_\gamma$ is the product of $(G_a)_{\gamma_a}$, which is unimodular by Proposition~\ref{prop:unitary-centralisers}(\ref{prop:unitary-centralisers:unimodular}). Furthermore, the the space $C^\infty_c(G)$ of $\CC$-valued locally constant compactly supported functions can be written as follows:
\[C^\infty_c(G) \cong \bigotimes_{a\in\Sigma}C^\infty_c (G_a).\]

\begin{defn}
    Let $\gamma\in G$. With respect to the fixed choice of the Haar measures $dg$ of $G$ and $dg_\gamma$ of $G_\gamma$, we define the \emph{orbital integral} of $f\in C^\infty_c(G)$ as follows
    \[
    O^G_\gamma(f)\coloneqq \int_{G_\gamma\backslash G}f(g^{-1}\gamma g) \frac{dg}{dg_\gamma},
    \]
    provided that the integral absolutely converges.
\end{defn}

Since each $\Gbf^{\der}$ is simply connected, our definition of an orbital integral is compatible with the standard definition when $F$ is of characteristic~$0$. If $G$ is of the form as in \eqref{eq:prod-group} and $f = \bigotimes_{a\in\Sigma}f_a$ for $f_a\in C^\infty_c(G_a)$, then we have
\[O^G_\gamma(f) = \prod_{a\in\Sigma}O^{G_a}_{\gamma_a}(f_a),\]
provided that either side of the equation converges.

Clearly, $O^G_\gamma(f)$ depends on the choice of the measure $\frac{dg}{dg_\gamma}$, which is suppressed from the notation. Furthermore, we have  $O^G_\gamma(f) = O^G_{\gamma'}(f)$ for any $\gamma'\in\Ocal_G(\gamma)$; that is,  $O^G_\gamma(f)$ depends only on $\Ocal_G(\gamma)$. 

Recall that any reductive $F$-group is unimodular, so by unimodularity of $\Gbf_\gamma$, both Haar measures $dg$ and $dg_\gamma$ are bi-invariant. Thus, if the orbital integral $O^G_\gamma(f)$ absolutely converges for each $f\in C^\infty_c(G)$, then the functional $O^G_\gamma\colon C^\infty_c(G)\to\CC$ is an \emph{invariant distribution}; that is,
\begin{equation}\label{eq:inv-distrib}
    O^G_\gamma(f) = O^G_\gamma(\prescript{h}{}{f}) \ \text{for any } h\in G,\ \text{where }\prescript{h}{}{f}(g) \coloneqq f(h^{-1}gh).
\end{equation}

\begin{lem}\label{lem:closed-orb-int}
    Let  $\gamma\in G$ be \emph{closed}. Then, for any $f\in C^\infty_c(G)$, the orbital integral $O^G_\gamma(f)$ converges absolutely. In fact, it reduces to a finite sum.
\end{lem}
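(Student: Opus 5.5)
The plan is to use closedness to turn the orbital integral into an integral of a compactly supported, locally constant function on the analytic manifold $G_\gamma\backslash G$. By Definition~\ref{def:closed}, the map $c_\gamma\colon G_\gamma\backslash G\to G$, $g\mapsto g^{-1}\gamma g$, is proper. Let $K\coloneqq \mathrm{supp}(f)$, which is compact. Then $c_\gamma^{-1}(K)$ is compact in $G_\gamma\backslash G$. The integrand $g\mapsto f(g^{-1}\gamma g)$ is well defined on $G_\gamma\backslash G$, locally constant, and supported in $c_\gamma^{-1}(K)$. Hence it lies in $C^\infty_c(G_\gamma\backslash G)$.

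Next I would exhibit the integral as a finite sum. Since $f$ is locally constant with compact support, there is a compact open subgroup $K_0\subset G$ such that $f$ is bi-$K_0$-invariant. In particular $f$ is invariant under $K_0$-conjugation, so the integrand is right $K_0$-invariant on $G_\gamma\backslash G$. The orbits of $K_0$ acting on $G_\gamma\backslash G$ by right translation are of the form $G_\gamma\backslash G_\gamma gK_0$. They are open, and they partition the space. The compact set $c_\gamma^{-1}(K)$ is covered by finitely many of them, say those attached to $g_1,\dots,g_r$.

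On each such orbit the integrand is constant, equal to $f(g_i^{-1}\gamma g_i)$. The quotient measure $\frac{dg}{dg_\gamma}$ exists because $G$ and $G_\gamma$ are unimodular, using Proposition~\ref{prop:unitary-centralisers}(\ref{prop:unitary-centralisers:unimodular}) or the product setting \eqref{eq:prod-group}. Its value on the orbit $G_\gamma\backslash G_\gamma g_iK_0$ equals $\vol(K_0)/\vol(G_\gamma\cap g_iK_0g_i^{-1})$. This is finite and positive, since $G_\gamma\cap g_iK_0g_i^{-1}$ is a compact open subgroup of $G_\gamma$. Therefore
\[
O^G_\gamma(f)=\sum_{i=1}^r f(g_i^{-1}\gamma g_i)\,\frac{\vol(K_0)}{\vol(G_\gamma\cap g_iK_0g_i^{-1})},
\]
and the same argument applied to $|f|$ gives absolute convergence.

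There is essentially no obstacle. The only point needing care is that properness of $c_\gamma$ is exactly what the definition of closedness provides. In the product setting \eqref{eq:prod-group}, I would either note that closedness of a product element can be checked factorwise, or run the argument directly on $G$, since it uses only properness and unimodularity.
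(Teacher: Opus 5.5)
Your proof is correct and follows essentially the same route as the paper: properness of $c_\gamma$ makes the integrand a compactly supported, locally constant function on $G_\gamma\backslash G$, so the orbital integral is a finite sum of volumes of compact open sets. The only difference is in how the finite decomposition is made. The paper pulls back a partition of $\mathrm{supp}(f)$ into compact opens on which $f$ is constant, whereas you partition $G_\gamma\backslash G$ into right $K_0$-orbits and compute their volumes explicitly as $\vol(K_0)/\vol(G_\gamma\cap g_iK_0g_i^{-1})$.
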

\begin{proof}
    By closedness, the conjugation map $c_\gamma\colon G_\gamma\backslash G \to G$ is \emph{proper}. If the support of $f\in C^\infty_c(G)$ can be written as $\bigsqcup_i S_i$ for finitely many compact open subsets $S_i\subset G$ such that $f|_{S_i}$ is a constant function with value $\alpha_i\ne0$, then we have 
    \[O^G_\gamma(f) = \sum_i \alpha_i\cdot \vol\big(c_\gamma^{-1}(S_i)\big),\]
    where $c_\gamma^{-1}(S_i)$ has finite positive volume as it is compact open in $G_\gamma\backslash G$.
\end{proof}

The following theorem is the main result of this paper.
\begin{thm}\label{th:main}
    Let $G = \Urm(V,h)$ be a unitary group over a non-archimedean local field $F$. Then for any $\gamma$ and $f\in C^\infty_c(G)$, the orbital integral $O^G_\gamma(f)$ absolutely converges.
\end{thm}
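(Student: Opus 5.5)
The proof has two stages: first reduce to \emph{primary} elements, then treat those via an associated parabolic and Ranga~Rao's method.

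\textbf{Reduction to primary elements.} Given $\gamma\in G$, I will use the orthogonal decomposition $(V,h)=\bigoplus_{a\in\Sigma}(V_a,h_a)$ of Corollary~\ref{cor:elem-div-self-dual}(\ref{cor:elem-div-self-dual:ortho-decomp}). Let $\Mbf\subset\Gbf$ be the stabiliser of this decomposition. For $a\in\Sigma_{nd}$ its factor is $\bU(V_a,h_a)$. For $a\in\Sigma_h$ its factor is the subgroup of $\Urm(V_a,h_a)$ preserving $V\{\wp_a\}\oplus V\{\overline\wp_a^\vee\}$, which is isomorphic to $\bRes_{E/F}\bGL_E(V\{\wp_a\})$. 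Thus $M$ has the shape \eqref{eq:prod-group}, and $G_\gamma=M_\gamma$ because $C_\gamma$ preserves primary parts.

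The map $M\times G\to G$, $(m,g)\mapsto g^{-1}mg$, restricted to a neighbourhood of $\gamma$ of elements whose primary decomposition is ``close'' to that of $\gamma$, should give a submersion. This is a descent argument in the style of Harish-Chandra: $\gamma$ is $M$-regular in the sense that $\Ad(\gamma)-1$ is invertible on $\gfr/\mfr$, since distinct primary parts have coprime minimal polynomials. From this I would express $O^G_\gamma(f)$ as $O^M_\gamma(\phi)$ for some $\phi\in C^\infty_c(M)$ obtained by integrating $f$ over $M\backslash G$ near $\gamma$. The product formula then reduces the problem to factors.
\begin{itemize}
\item The $\GL$-factors are covered by Deligne--Kazhdan--Vign\'eras/Laumon.
\item The unitary factors are \emph{primary}: $\wp_a=\overline\wp_a^\vee$ is the only primary part.
\end{itemize}

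\textbf{Primary case.} For a primary $\gamma$ with $V=V\{\wp\}$, I attach a filtration analogous to the Jacobson--Morozov/Kempf filtration of the nilpotent operator $\wp(\gamma)$. Concretely, I take the kernel/image flag of $\wp(\gamma)$ arranged so that it is self-dual under $h$ (using $\wp=\overline\wp^\vee$ and Proposition~\ref{prop:elem-div-self-dual}). This yields an $F$-rational cocharacter $\lambda$ and parabolic $P=MN$ with $G_\gamma\subset P$. By the Iwasawa decomposition $G=PK$ and unimodularity,
\[
O^G_\gamma(f)=\int_{G_\gamma\backslash P}\int_K f(k^{-1}p^{-1}\gamma pk)\,dk\,d_lp ,
\]
so it suffices to show that $\int_{G_\gamma\backslash P}\phi(p^{-1}\gamma p)\,d_lp$ converges for $\phi\in C_c^\infty(P)$, accounting for the modulus character.

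Next I construct $\Vcal(\gamma)\subset P$. It should be the closed set of elements $\gamma'\in\gamma N$, suitably enlarged by $M$-conjugacy of the Levi part, that induce the same graded data on $\gr^\lambda V$, with a nondegeneracy condition dropped. The $P$-orbit of $\gamma$ is then the open subset where the induced maps $\gr_i\to\gr_{i+2}$ attain maximal rank. Pushing forward Haar measure along $P\to\Vcal(\gamma)$ gives a measure on the open orbit. Following Ranga~Rao and McNinch, I would compare it with a measure $\mu$ on $\Vcal(\gamma)$, namely a fibred product of additive measures on $N$-graded pieces times an invariant measure on the closed $M$-orbit of the Levi part. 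The Radon--Nikodym derivative should be $\lVert\det\rVert^{-s}$ of an explicit polynomial, locally bounded by an $L^1_{loc}$ function, because it is a product of powers of $|x|$ with exponents $>-1$ on each graded piece. Convergence then follows because $\Vcal(\gamma)$ is closed: $\Vcal(\gamma)\cap\operatorname{supp}\phi$ is compact, and the Levi part is closed by Lemma~\ref{lem:closed}.

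\textbf{Main obstacle.} The hard step is constructing $\Vcal(\gamma)$ and verifying the density estimate when $\gamma$ is not a central translate of a unipotent element. In that situation the Levi component of $\gamma$ acts on $\gr^\lambda V$ via $E[T]/\wp$-structures that are possibly inseparable over $E$, and I cannot use the exponential map. I would first try working entirely with the $E[T]$-module $V$: the graded pieces are $E_\wp$-vector spaces (or $\wp$-adically filtered modules), and the orbit map can be computed linearly over $E[T]/\wp^m$. This should reduce the Jacobian computation to McNinch's nilpotent computation over the ring $E[T]/\wp^m$.
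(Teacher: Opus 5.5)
Your reduction step is the same as the paper's. The stabiliser of $V=\bigoplus_a V_a$ is exactly $\Gbf_s$ for the element $s$ of \eqref{eq:red-to-primary:s}, and Lemma~\ref{lem:Gs} is the Fubini form of your descent. Your observation that $\Ad(\gamma)-1$ is invertible on $\gfr/\mfr$ is what makes the descended function compactly supported near $\gamma s^{-1}$. Your outline for primary $\gamma$ is also the paper's: a self-dual kernel/image flag of $\wp(\gamma)$, $G=PK$, a closed $\Vcal(\gamma)\subset P$ containing $\Ocal_P(\gamma)$ as an open subset, and a comparison of measures. The gap is that the two steps you leave at ``should'' are the actual content of the proof.

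\textbf{Construction of $\Vcal(\gamma)$.} When the exponents $m_i$ have both parities, $\Ocal_P(\gamma)$ has positive codimension $\dim\Gbf_\gamma-\dim\Mbf_{\overline\gamma}$ in $\Ocal_M(\overline\gamma)\cdot N$ (Corollary~\ref{cor:openness-levi-preimage}). You never say which closed condition removes this excess. In the paper it sits in degree one, not in the maps $\gr_i\to\gr_{i+2}$.
\begin{enumerate}
\item One proves $\gamma\equiv\overline\gamma\bmod N_2$ (Corollary~\ref{cor:assoc-fil}(\ref{cor:assoc-fil:N2})). Hence, modulo $N_2$, the orbit fills the subbundle of $M_{\overline\gamma}\backslash(M\ltimes\gfr(1))$ whose fibre over $M_{\overline\gamma}m$ is $\Ad(m^{-1})\bigl((1-\Ad(\overline\gamma^{-1}))\gfr(1)\bigr)$ (Lemma~\ref{lem:V-bar-gamma}).
\item Proposition~\ref{prop:tangent-sp-cokernel}, which rests on the parity pattern of $\dim_{E_\wp}\gr^jV$ from Lemma~\ref{lem:assoc-fil}, shows this subbundle has codimension exactly $\dim\nfr_{[\gamma]}$.
\item A tangent-space count then makes $\Ocal_P(\gamma)$ open in the preimage $\Vcal(\gamma)$ (Corollary~\ref{cor:V-gamma}).
\end{enumerate}
If by ``same graded data'' you mean that $\wp(x)$ maps $\Fil^jV$ into $\Fil^{j+2}V$, this cuts out the same set. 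The reason is that the first-order term of $\wp(\overline\gamma(1+\xi))$ is a divided difference of $\wp$ whose kernel on $\gfr(1)$ is $(1-\Ad(\overline\gamma^{-1}))\gfr(1)$. The dimension count is still needed. Your identification of the orbit with the locus where $\gr_i\to\gr_{i+2}$ has maximal rank is unproved and false as stated. For a regular unipotent element of a quasi-split $\Urm(V,h)$ with $\dim_EV=2$, the non-identity elements of $N$ form two $P$-orbits, indexed by $F^\times/N_{E/F}(E^\times)$.

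\textbf{The density estimate.} Your density argument does not go through as stated:
\begin{itemize}
\item nothing produces a Radon--Nikodym derivative $\lVert\det\rVert^{-s}$ with a monomial structure ``with exponents $>-1$'';
\item a negative power of $\lVert\phi\rVert$ for a polynomial $\phi$ need not be locally integrable, and you have no resolution of singularities in characteristic $p$ to check it;
\item there is no McNinch computation over the Artinian ring $E[T]/\wp^m$ to reduce to.
\end{itemize}
The paper needs no Jacobian. It puts on $\Vcal(\gamma)$ the measure $d\mu$ of Construction~\ref{constr:measure}: an invariant measure on $\Ocal_M(\overline\gamma)$ times Haar measure on the fibres, which are translates of conjugates of $N'=\ker(N\to\nfr_{[\gamma]})$. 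It computes the conjugation multiplier of $d\mu$ (Lemma~\ref{lem:dmu-multiplier}). Uniqueness of relatively invariant measures on $G_\gamma\backslash P$ then gives $C\cdot\frac{dm\cdot dn}{dg_\gamma}=\delta_{[\gamma]}\cdot c_\gamma^\ast(d\mu)$ (Proposition~\ref{prop:measures}), with $\delta_{[\gamma]}$ trivial on $G_\gamma$ by Lemma~\ref{lem:Laumon4.8.5}.

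This density is not singular at the boundary of the orbit. Take unipotent $\gamma$ with $\gfr(1)\neq0$, e.g.\ $V\cong E[T]/(T-1)^2\oplus E[T]/(T-1)$ as in Remark~\ref{rmk:openness-levi-preimage}. Up to $C$, the density at $g^{-1}\gamma g$ is $\lVert\det(\Ad(g^{-1})\mid\gfr(1))\rVert$. This tends to $0$ along $\lambda(t)^{-1}\gamma\lambda(t)\to1$ as $\lVert t\rVert\to\infty$. So what must be controlled is an upper bound for $\delta_{[\gamma]}$ on compact subsets of $\Vcal(\gamma)$, not integrability. The same example shows that $\delta_{[\gamma]}$ is not constant on the pieces $S_\beta$, contrary to what the proof of Theorem~\ref{th:main-primary} asserts, so this bound has to be supplied separately. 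One natural way is to express $\delta_{[\gamma]}$ as a \emph{positive} power of the absolute value of a polynomial on $\Vcal(\gamma)$, in the spirit of Ranga~Rao. That is the correct-sign version of the estimate you had in mind.
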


\begin{rmk}\label{rmk:history}
    When $F$ is a non-archimedean local field of characteristic~$0$ and $G$ is the group of $F$-points of a connected reductive group, the absolute convergence of orbital integrals on $G$ was established by Rao \cite{RangaRao:OrbInt}. The main idea is to use the Jordan decomposition of $\gamma$ to reduce the convergence of $O^G_\gamma(f)$ to that of unipotent orbital integrals, and then apply the exponential map to further reduce to nilpotent orbital integrals on the Lie algebra. If the characteristic of $F$ avoids a finite list of bad primes depending on $G$, this strategy can be adapted to handle elements $\gamma\in G$ admitting a rational Jordan decomposition (see \cite[Theorems~58, 61]{McNinch:NilpOrbits}).
    
    For inner forms of $\bGL_n$, the absolute convergence of orbital integrals was established without restriction on the characteristic of $F$ by Deligne--Kazhdan--Vign\'eras \cite[\S{A.1}]{DeligneKazhdanVigneras:CenSimAlg}, building on ideas of Howe \cite{Howe:FourierTransforms-Germs-GLn} (see also \cite[\S4.8]{Laumon:Book1}). Our proof of Theorem~\ref{th:main} is inspired by a synthesis of both approaches.
\end{rmk}
The remainder of the paper is devoted to proving this theorem. We begin by reducing the proof to a simpler case in this section.

\begin{lem}\label{lem:Gs}
    Let $G$ be as in \eqref{eq:prod-group}, and fix $\gamma\in G$. Suppose that there exists a semisimple element $s$ in the centre of $G_\gamma$. In particular, $G_s$ is unimodular and it contains $G_\gamma$. We fix Haar measures on $G$, $G_s$, and $G_\gamma$.

    For $f\in C^\infty_c(G)$, we define $f^{(s)}\in C^\infty_c(G_s)$ as follows
    \[
    f^{(s)}(x)\coloneqq \int_{G_s\backslash G} f(g^{-1}xsg)\frac{dg}{dg_s}.
    \]
    If $O^{G_s}_{\gamma s^{-1}}(f^{(s)})$ converges, then so does $O^G_\gamma(f)$ and we have $O^G_\gamma(f) = O^{G_s}_{\gamma s^{-1}}(f^{(s)})$.
\end{lem}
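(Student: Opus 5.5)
This is a Fubini/transitivity-of-quotient-measures argument for non-negative functions, applied first to $|f|$. First, the preliminary claims. Since $s$ is semisimple, $G_s$ is a closed subgroup. By the product structure \eqref{eq:prod-group}, it is a product of centralisers of elements in general linear or unitary groups, so it is unimodular by Proposition~\ref{prop:unitary-centralisers}(\ref{prop:unitary-centralisers:unimodular}) and its analogue for $\GL$ \cite[Lemma~(4.8.6)]{Laumon:Book1}. Since $s$ is central in $G_\gamma$, we have $G_\gamma\subset G_s$. Moreover, $\gamma$ commutes with $s$, so $\gamma s^{-1}\in G_s$, and its centraliser in $G_s$ is $G_s\cap G_{\gamma s^{-1}}$. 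This equals $G_\gamma$, because any element commuting with $\gamma$ commutes with $s$ (as $s\in Z(G_\gamma)$), and conversely anything commuting with both $s$ and $\gamma s^{-1}$ commutes with $\gamma$. So $(G_s)_{\gamma s^{-1}} = G_\gamma$, and the orbital integral $O^{G_s}_{\gamma s^{-1}}$ uses the quotient measure $dg_s/dg_\gamma$.

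Next, $f^{(s)}$ is well defined and lies in $C^\infty_c(G_s)$. By Lemma~\ref{lem:closed}(\ref{lem:closed:ss}), $s$ is closed, so $c_s\colon G_s\backslash G\to G$ is proper. I would show the stronger statement that the map $G_s\times (G_s\backslash G)\supset$ (suitable set) $\to G$, $(x,g)\mapsto g^{-1}xsg$, is proper when $x$ is restricted to a neighbourhood of a compact set in $G_s$. Equivalently, the map $G_s\times^{G_s} G\to G$, $[x,g]\mapsto g^{-1}xsg$, is proper on the locus where $xs$ has semisimple part conjugate near $s$. This is the main obstacle: we need that for $f$ compactly supported, the set of $(x,G_sg)$ with $g^{-1}xsg\in\operatorname{supp} f$ and $x$ in the relevant region has compact image in $G_s\backslash G$, and that $f^{(s)}$ has compact support.

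I would handle this via characteristic polynomials, avoiding Jordan decomposition. The characteristic polynomial of $xs$ is bounded when $g^{-1}xsg\in\operatorname{supp}f$. However, $f^{(s)}$ need not be compactly supported on all of $G_s$ as stated, and this worries me. The honest fallback is to only prove the equality of integrals of $|f|$ via Fubini, which does not require compact support of $f^{(s)}$. At minimum, the integral defining $f^{(s)}(x)$ for fixed $x$ should converge. I would at least establish convergence for $x$ in the relevant region, with local constancy of $f^{(s)}$ following from that of $f$ by uniformity in $x$.

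For the identity itself, write $G_\gamma\backslash G$ as fibred over $G_s\backslash G$ with fibre $G_\gamma\backslash G_s$. Using the chosen Haar measures, the quotient measures satisfy
\[
\int_{G_\gamma\backslash G}\phi\,\frac{dg}{dg_\gamma}=\int_{G_s\backslash G}\int_{G_\gamma\backslash G_s}\phi(hg)\,\frac{dh}{dg_\gamma}\,\frac{dg}{dg_s}
\]
for any non-negative measurable $\phi$ on $G_\gamma\backslash G$. This is the standard transitivity of invariant measures, valid since all three groups are unimodular. Applying it to $\phi(g)=|f|(g^{-1}\gamma g)$ and writing $(hg)^{-1}\gamma(hg)=g^{-1}(h^{-1}\gamma s^{-1}h)s\,g$, using that $h\in G_s$ commutes with $s$, gives
\[
\int_{G_\gamma\backslash G}|f|(g^{-1}\gamma g)=\int_{G_\gamma\backslash G_s}|f|^{(s)}(h^{-1}\gamma s^{-1}h).
\]
This holds by Tonelli, as an equality in $[0,\infty]$. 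So if $O^{G_s}_{\gamma s^{-1}}(|f|^{(s)})$ is finite, then $O^G_\gamma(f)$ converges absolutely. Here I interpret the hypothesis as absolute convergence, which is needed and which is the sense in which the theorem is stated. Fubini then yields the same identity for $f$ itself.
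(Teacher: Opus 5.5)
\textbf{Gap.} Your overall route matches the paper's: you prove $(G_s)_{\gamma s^{-1}}=G_\gamma$, use transitivity of quotient measures along $G_\gamma\backslash G\to G_s\backslash G$, and rewrite $(hg)^{-1}\gamma(hg)=g^{-1}(h^{-1}\gamma s^{-1}h)s\,g$ for $h\in G_s$. Running the argument on $|f|$ is more careful than the paper, which manipulates $f$ directly.

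However, the step you attribute to Tonelli fails. The paper's displayed computation has the same defect: its inner integral over $G_s\backslash G$ is of a function that is not left $G_s$-invariant. Your transitivity formula correctly puts the fibre integral over $G_\gamma\backslash G_s$ on the inside. Since $G_\gamma\backslash G$ is not a product of base and fibre, reversing the order requires the inner integral $\int_{G_s\backslash G}|f|(g^{-1}xsg)\,\frac{dg}{dg_s}$, with $x=h^{-1}\gamma s^{-1}h$, to make sense. It does not: replacing $g$ by $yg$ with $y\in G_s$ replaces $x$ by $y^{-1}xy$, so the integrand is left $G_s$-invariant only when $x$ is central in $G_s$. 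This twisted invariance is exactly what your contracted product $G_s\times^{G_s}G$ encodes, but the Fubini step forgets it.

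Your worry about $f^{(s)}$ is justified, in a stronger form. For $x=s^{-1}$ the integrand is constant, so for $f=\triv_K$ with $K$ a compact open subgroup we get $f^{(s)}(s^{-1})=\vol(G_s\backslash G)=\infty$ whenever $G_s\backslash G$ is non-compact. Thus $f^{(s)}$ as defined is not a test function on $G_s$. The subsequent reduction needs one, since it writes $f^{(s)}$ as a sum of pure tensors and invokes convergence on each $G_a$. Bounding characteristic polynomials cannot repair this, since $xs=1$ for $x=s^{-1}$.

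\textbf{Missing idea: a cutoff function.} For the $s$ of \eqref{eq:red-to-primary:s}, $s$ acts by a scalar on each primary component of $V$. The Chinese remainder theorem therefore gives $P\in E[T]$ with $s=P(\gamma)$. Hence $g^{-1}\gamma g\in\mathrm{supp}\,f$ forces $g^{-1}sg=P(g^{-1}\gamma g)$ into a compact set. Since $s$ is closed, $c_s$ is proper, so $\Omega\coloneqq\{G_sg\mid g^{-1}\gamma g\in\mathrm{supp}\,f\}$ is relatively compact in $G_s\backslash G$.

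Choose $\alpha\in C^\infty_c(G)$ with $\alpha\geqslant0$ and $\int_{G_s}\alpha(yg)\,dy=1$ whenever $G_sg\in\Omega$; for instance, normalise $\triv_C$ for a compact open $C\subset G$ whose image in $G_s\backslash G$ contains $\overline\Omega$. Set $f^{(s)}(x)\coloneqq\int_G\alpha(g)f(g^{-1}xsg)\,dg$. This lies in $C^\infty_c(G_s)$; it depends on $\gamma$, which is harmless for Corollary~\ref{cor:Gs}.

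Now Tonelli is applied on genuine products. Substitute $g\mapsto h^{-1}g$, unfold $\int_G=\int_{G_\gamma\backslash G}\int_{G_\gamma}$, and use unimodularity of $G_\gamma$ and $G_s$ to get
\[
\int_{G_\gamma\backslash G_s}|f|^{(s)}(h^{-1}\gamma s^{-1}h)\,\frac{dh}{dg_\gamma}
=\int_{G_\gamma\backslash G}|f|(g^{-1}\gamma g)\Big(\int_{G_s}\alpha(yg)\,dy\Big)\frac{dg}{dg_\gamma}
=O^G_\gamma(|f|).
\]
The hypothesis is then applied to $|f|^{(s)}\in C^\infty_c(G_s)$, and Fubini gives the identity for $f$ itself.
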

\begin{proof}
    We claim that $(G_s)_{\gamma s^{-1}} = G_\gamma$. In fact, for $x\in G_s$ we have $x^{-1}(\gamma s^{-1})x = \gamma s^{-1}$ if and only if $x^{-1}\gamma x = \gamma$. Thus,
    \begin{align*}
        O^{G_s}_{\gamma s^{-1}}(f^{(s)}) 
        &= \int_{G_\gamma\backslash G_s} f^{(s)}(x^{-1}(\gamma s^{-1})x) \frac{dx}{dg_\gamma}\\
        &=\int_{G_\gamma\backslash G_s} \int_{G_s\backslash G} f(g^{-1}x^{-1}\gamma xg) \frac{dg}{dx}\frac{dx}{dg_\gamma}\\
        &= \int_{G_\gamma\backslash G}f(g^{-1}\gamma g)\frac{dg}{dg_\gamma} = O^G_\gamma(f),
    \end{align*}
    where from the second line to the third, we renamed $xg$ as $g$.
\end{proof}

If $\gamma$ admits a Jordan decomposition, we apply Lemma~\ref{lem:Gs} by setting $s$ to be the semisimple part of $\gamma$, and reduce the convergence of $O^G_\gamma(f)$ to that of unipotent orbital integral. (See Remark~\ref{rmk:history} for a related remark.) For any element $\gamma\in G=\Urm(V,h)$ without Jordan decomposition, we can still choose a semisimple element $s\in G$ satisfying the conditions of Lemma~\ref{lem:Gs} so that $(G_s,\gamma s^{-1})$ is ``simpler'' than $(G,\gamma)$, which we explain now.

\begin{subequations}\label{eq:red-to-primary}
We view $V$ as an $E[T]$-module via the action of $\gamma$, and adopt the notation from \eqref{eq:elem-div} and Corollary~\ref{cor:elem-div-self-dual}, including $\Sigma=\Sigma_h\sqcup\Sigma_{nd}$, $\wp_a$, and $(V_a,h_a)$. For each $a\in \Sigma_{nd}$, we choose $t_a\in E^\times$ with $N_{E/F}(t_a)=1$ that are pairwise distinct. Similarly, for any $a\in\Sigma_h$, we choose $t_a\in F^\times$ that are pairwise distinct.

Now we define 
\begin{equation}\label{eq:red-to-primary:s}
    s\coloneqq \bigoplus_{a\in\Sigma_{nd}} \Big(t_a\cdot \id_{V_a}\Big)\oplus\bigoplus_{a\in\Sigma_h}\Big( t_a\cdot\id_{V\{\wp_a\}}\oplus t_a^{-1}\cdot \id_{V\{\overline\wp_a^\vee\}} \Big)\in \GL_E(V).
\end{equation}
Clearly, $s$ is in the centre of the commutant
\[C_\gamma\cong \prod_{a\in\Sigma_{nd}}\End_{E[T]}(V_a) \times \prod_{a\in\Sigma_{h}}\left( \End_{E[T]}\big(V\{\wp_a\} \big) \times \End_{E[T]}\big(V\{\overline\wp_a^\vee\} \big) \right),\]
so $s$ is in the centre of $\Gbf_\gamma(E)$. Furthermore, we clearly have $s^\star = s^{-1}$, so  $s\in G = \Urm(V,h)$. Thus, $s$ is in the centre of $G_\gamma$.

Using the orthogonal direct sum decomposition $(V,h) = \bigoplus_{a\in\Sigma}(V_a,h_a)$ one can deduce 
\begin{equation}
    G_s = \prod_{a\in \Sigma}G_{a},\qquad\text{where }G_{a}=
    \begin{cases}
        U(V_a,h_a) &\text{if }a\in\Sigma_{nd};\\
        \GL_E(V\{\wp\})& \text{if }a\in \Sigma_h.
    \end{cases}
\end{equation}
Since $\gamma\in G_\gamma\subset G_s$, we can write $\gamma = (\gamma_a)_{a\in\Sigma}$ for $\gamma_a\in G_a$. Thus, we have 
\begin{equation}
\gamma s^{-1} = \left( t_a^{-1}\cdot\gamma_a \right)_{a\in \Sigma}\in G_s.
\end{equation}

For any $a\in\Sigma_{nd}$ so $\wp_a=\overline\wp_a^\vee$, set $\wp_a'(T)\coloneqq t_a^{-d_{\wp_a}}\wp_a (t_a\cdot T)$, which is also monic and irreducible. Since $N_{E/F}(t_a)=1$, we have $\wp_a'=\overline\wp_a'^\vee$. Furthermore, the $E[T]$-module structure on $V_a$ defined by $t_a^{-1}\cdot\gamma_a$ is $\wp'_a$-primary. This leads to the following definition.
\end{subequations}

\begin{defn}\label{def:primary}
    Let $\wp\in E[T]$ be a monic irreducible polynomial such that $\wp=\overline\wp^\vee$. We say that $\gamma \in G=\Urm(V,h)$ is \emph{$\wp$-primary} if the $E[T]$-module structure on $V$ defined by $\gamma$ is $\wp$-primary; that is, $V = V\{\wp\}$. We say that $\gamma$ is \emph{primary} if it is $\wp$-primary for some $\wp$.
\end{defn}
Now, let $f\in C^\infty_c(G)$ and write $f^{(s)} = \sum_{j}\bigotimes_{a\in\Sigma}f_{a,j}$ for some $f_{a,j}\in C^\infty_c(G_a)$, Lemma~\ref{lem:Gs} implies the following:
\[
    O^G_\gamma(f) = \sum_j \prod_{a\in\Sigma}O^{G_a}_{t_a^{-1}\cdot\gamma_a}(f_{a,j}).
\]
If $a\in\Sigma_h$, then $O^{G_a}_{t_a^{-1}\cdot\gamma_a}(f_{a,j})$ absolutely converges since $G_a = \GL_E\left( V\{\wp_a\} \right)$ (see \cite[Proposition~(4.8.9)]{Laumon:Book1}). This proves the following statement.

\begin{cor}\label{cor:Gs}
    To prove Theorem~\ref{th:main}, it suffices to prove the absolute convergence of $O^G_\gamma(f)$ for \emph{primary} elements $\gamma\in G$.
\end{cor}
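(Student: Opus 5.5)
The plan is to combine Lemma~\ref{lem:Gs} with the element $s$ constructed in \eqref{eq:red-to-primary:s}, and then to use the product structure of $G_s$ to split the orbital integral into factors. Each factor will either be an orbital integral on a general linear group, which is already known to converge, or an orbital integral of a primary element on a unitary group.

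\textbf{Step 1: the descent data.} Fix $\gamma\in G$ and choose the $t_a$ as in \eqref{eq:red-to-primary}. We then have $s\in G$, $s$ is central in $G_\gamma$, and $s$ is semisimple, being diagonalisable over $E$. Moreover
\[
G_s=\prod_{a\in\Sigma}G_a .
\]
Here $G_a$ is a unitary group for $a\in\Sigma_{nd}$ and a general linear group for $a\in\Sigma_h$. So $G_s$ is of the form \eqref{eq:prod-group}, its centralisers are unimodular, and the definitions of \S\ref{sec:main} apply to it.

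\textbf{Step 2: the descended function.} I must check that $f^{(s)}$ makes sense as an element of $C^\infty_c(G_s)$, at least after a harmless modification; this is the main obstacle.

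Since $s$ is semisimple, it is closed by Lemma~\ref{lem:closed}(\ref{lem:closed:ss}). Hence the conjugation map $G_s\backslash G\to G$ is proper, and for each fixed $x$ in a compact subset of $G_s$ the integral defining $f^{(s)}(x)$ is over a compact set.

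To control the support, I would first shrink $G_s$ to a small $G_s$-invariant open neighbourhood $\Omega$ of $\gamma s^{-1}$. The $t_a$ are pairwise distinct, and for $a\in\Sigma_h$ we have $t_a\ne t_a^{-1}$ when the $t_a$ are chosen suitably, for instance with $v(t_a)\neq 0$. On such an $\Omega$ the map
\[
G\times^{G_s}\Omega\to G,\qquad (g,x)\mapsto g^{-1}xsg
\]
is then an open embedding, as in Harish-Chandra descent. This gives two consequences:
\begin{itemize}
\item the centraliser in $G$ of any $xs$ with $x\in\Omega$ is contained in $G_s$;
\item $f^{(s)}\cdot\mathbbm{1}_\Omega$ is locally constant and compactly supported, possibly after replacing $\Omega$ by an $\Omega$ with compact image modulo conjugation.
\end{itemize}
Since $O^{G_s}_{\gamma s^{-1}}$ only sees functions through the orbit of $\gamma s^{-1}$, which lies in $\Omega$, Lemma~\ref{lem:Gs} applies to $f^{(s)}\mathbbm{1}_\Omega$. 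If this cutoff argument proves delicate, I would instead replace $f$ by $f\cdot\mathbbm{1}_{U}$ for the open $G$-invariant set $U$ that is the image of $G\times^{G_s}\Omega$. This is compatible with orbital integrals because $O^G_\gamma$ only sees $\Ocal_G(\gamma)\subset U$.

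\textbf{Step 3: factorisation and conclusion.} Write
\[
f^{(s)}=\sum_j\bigotimes_{a\in\Sigma}f_{a,j},\qquad f_{a,j}\in C^\infty_c(G_a),
\]
as a finite sum. Then Lemma~\ref{lem:Gs} gives
\[
O^G_\gamma(f)=\sum_j\prod_{a\in\Sigma}O^{G_a}_{t_a^{-1}\gamma_a}(f_{a,j}),
\]
valid as soon as each factor converges absolutely; applying the same argument to $|f|$ yields absolute convergence.

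The factors are handled as follows.
\begin{itemize}
\item For $a\in\Sigma_h$, the group $G_a$ is a general linear group, and the factor converges by \cite[Proposition~(4.8.9)]{Laumon:Book1}.
\item For $a\in\Sigma_{nd}$, the element $t_a^{-1}\gamma_a$ is $\wp'_a$-primary in $\Urm(V_a,h_a)$ with $\wp_a'=\overline{\wp_a'}^\vee$, as computed in \eqref{eq:red-to-primary}.
\end{itemize}
Thus the case of primary elements, applied to each unitary factor, gives the convergence of $O^G_\gamma(f)$. This proves the corollary.
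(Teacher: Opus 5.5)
The overall route is exactly the paper's: Lemma~\ref{lem:Gs} with the $s$ of \eqref{eq:red-to-primary:s}, factorisation over $G_s=\prod_aG_a$, Laumon for the $\GL$ factors, and primary elements for the unitary factors. You are also right that the delicate point is whether $f^{(s)}$ is an honest element of $C^\infty_c(G_s)$; the paper uses it without comment. But your Step~2 does not repair this, and there is a genuine gap.

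First, the integrand $g\mapsto f(g^{-1}xsg)$ is not left $G_s$-invariant. Replacing $g$ by $hg$ with $h\in G_s$ gives $f(g^{-1}(h^{-1}xh)sg)$, so the integral over $G_s\backslash G$ has no meaning unless $x$ is central in $G_s$. Properness of $c_s$ alone also says nothing about $g^{-1}xsg$: at $x=s^{-1}$ the integrand is the constant $f(1)$. Second, any $G_s$-invariant descent of $f$ has support a union of $G_s$-orbits, such as $\Ocal_{G_s}(\gamma s^{-1})\cong G_\gamma\backslash G_s$, and these are typically non-compact. So neither multiplying by $\triv_\Omega$ for a $G_s$-invariant $\Omega$, nor replacing $f$ by $f\triv_U$, produces a compactly supported function. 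Without that, the decomposition $\sum_j\bigotimes_af_{a,j}$ with $f_{a,j}\in C^\infty_c(G_a)$ used in Step~3 is not available.

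The missing idea is Harish-Chandra's cutoff, which gives a \emph{non-invariant}, compactly supported descent.
\begin{enumerate}
\item Take $\Omega\subset G_s$ open, closed and $G_s$-invariant, cut out by requiring the characteristic polynomial of $x$ on each eigenspace $V_t$ of $s$ to lie in a small compact open neighbourhood of that of $\gamma s^{-1}$.
\item Because the $\wp_a$ and $\overline\wp_a^\vee$ are pairwise coprime, for $x\in\Omega$ the Chinese remainder theorem writes $s$ as a polynomial in $xs$. Its coefficients depend continuously on these characteristic polynomials. This is what replaces the Jordan decomposition here (and what would justify your open-embedding claim in characteristic~$p$).
\item Consequently, if $g^{-1}xsg\in\operatorname{supp}f$ with $x\in\Omega$, then $g^{-1}sg$ lies in a fixed compact set. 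Hence $g\in G_sC$ for some compact $C\subset G$, by closedness of $s$.
\item Choose $\alpha\in C^\infty_c(G)$ with $\int_{G_s}\alpha(hg)\,dh=1$ for $g\in G_sC$. Set $\tilde f(x)\coloneqq\triv_\Omega(x)\int_G\alpha(g)f(g^{-1}xsg)\,dg$, which lies in $C^\infty_c(G_s)$.
\item For $f\geqslant0$, apply Weil's formula $\int_{G_\gamma\backslash G}F(g)\,dg=\int_{G_s\backslash G}\int_{G_\gamma\backslash G_s}F(hg)\,dh\,dg$. The inner integral is supported on $G_sC$, so the outer integral can be replaced by integration against $\alpha$ over $G$. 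Tonelli then gives $O^G_\gamma(f)=O^{G_s}_{\gamma s^{-1}}(\tilde f)$.
\end{enumerate}
Your Step~3 then goes through with $\tilde f$ in place of $f^{(s)}$.

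Separately, $G_s=\prod_aG_a$ requires $t_a\neq t_b^{-1}$ for all $a,b\in\Sigma_h$, not only for $a=b$. Taking all $v(t_a)>0$ ensures this. You were right that the paper's condition ``pairwise distinct in $F^\times$'' is not enough on its own.
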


\section{Parabolic subgroup attached to a $\wp$-primary element}\label{sec:parabolic}

We continue with the notation from \S\ref{sec:review}, where $\Gbf = \bU(V,h)$ and $G=\Gbf(F)$. The goal of this section is to naturally associate, to each $\wp$-primary element $\gamma\in G$, an $F$-rational cocharacter $\lambda\colon\GG_m\to \Gbf$. This construction, in particular, defines a parabolic subgroup and a grading on its Lie algebra, which will be an important tool in the proof of the main theorem.

When $\gamma$ is unipotent, such a construction appears in \cite[\S{IV.2}]{SpringerSteinberg:ConjClasses} for classical groups including unitary groups, and we directly generalise this construction to primary elements $\gamma$. We also note that McNinch \cite[\S3, \S4]{McNinch:NilpOrbits} has given a geometric invariant theory (GIT) construction of such cocharacters for unipotent element and nilpotent Lie algebra element in a greater generality.

\begin{subequations}
    \addtocounter{equation}{-1}
We start by reviewing the construction of the parabolic subgroup associated to a ``primary'' element $\gamma\in\GL_E(V)$, following \cite[Lemma~4.8.4]{Laumon:Book1}.
\begin{lem}\label{lem:Laumon4.8.4}
    Fix $\gamma\in \GL_E(V)$ and endow $V$ with the $E[T]$-module structure defined by $\gamma$. Suppose that there is a monic irreducible polynomial $\wp\in E[T]$ such that
\begin{equation}\label{eq:Laumon4.8.4:primary}
    V\cong \bigoplus_{i=1}^s \left( E[T]/\wp^{m_i} \right)^{r_i}
\end{equation}
where we order the exponents as $m_1>\cdots>m_s>0$ and $r_i$ denotes the multiplicity.

Set $\Hbf\coloneqq \bGL_E(V)$, and let $\Pbf$ be the parabolic subgroup of $\Hbf$ stabilising the flag $\left( \ker(\wp^k\mid V) \right)_{k=1,\cdots,m_1}$, with $\Pbf=\Mbf\Nbf$ denoting the fixed Levi decomposition. Let $\overline\gamma\in \Mbf(E)$ be the image of $\gamma$ under the Levi quotient. Then, the following properties hold.
\begin{enumerate}
    \item\label{lem:Laumon4.8.4:dim} $\Pbf$ contains $\Hbf_\gamma$, and we have 
    \begin{equation}\label{eq:Laumon4.8.4:dim}
        \dim(\Hbf_\gamma)=\dim(\Mbf_{\overline\gamma})=d_\wp\cdot\sum_{i=1}^s (m_i-m_{i+1})\Big(\sum_{j=1}^ir_j\Big)^2,
    \end{equation}
    where $d_\wp\coloneqq\deg\wp$ and $m_{s+1}\coloneqq 0$.
    \item\label{lem:Laumon4.8.4:closure} $\overline\gamma\in \Mbf(E)$ is \emph{closed}, and the closure of $\Ocal_{\Pbf(E)}(\gamma)$ in $\Pbf(E)$ for the analytic topology is $\Ocal_{\Mbf(E)}(\overline\gamma)\cdot \Nbf(E)$.
\end{enumerate}
\end{lem}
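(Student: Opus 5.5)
The plan is to make everything explicit in terms of the flag $V_k\coloneqq\ker(\wp^k\mid V)$, $k=0,\dots,m_1$, and its graded pieces $\gr_k\coloneqq V_k/V_{k-1}$. Since $\wp$ kills $\gr_k$, each $\gr_k$ is an $E_\wp\coloneqq E[T]/\wp$-vector space. Reading off \eqref{eq:Laumon4.8.4:primary}, we get $\dim_{E_\wp}\gr_k=\sum_{j\colon m_j\ge k}r_j$. The Levi quotient is $\Mbf=\prod_k\bGL_E(\gr_k)$, and $\overline\gamma=(\gamma|_{\gr_k})_k$. On each $\gr_k$ the minimal polynomial of $\overline\gamma$ is $\wp$, which is irreducible, so $\overline\gamma$ is closed in $\Mbf(E)$ by Lemma~\ref{lem:closed}(\ref{lem:closed:GL}). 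Its centraliser is $\Mbf_{\overline\gamma}=\prod_k\bRes_{E_\wp/E}\bGL_{E_\wp}(\gr_k)$, and summing over $k$ (grouping the $k$ with $m_{i+1}<k\le m_i$) gives the right-hand side of \eqref{eq:Laumon4.8.4:dim}.

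For (\ref{lem:Laumon4.8.4:dim}), first note that any $g$ commuting with $\gamma$ commutes with $\wp(\gamma)$ and hence preserves every $V_k$, so $\Hbf_\gamma\subset\Pbf$. Next compute $\dim\Hbf_\gamma=\dim_E C_\gamma$ using $\dim_E\Hom_{E[T]}(E[T]/\wp^a,E[T]/\wp^b)=d_\wp\min(a,b)$. This gives $\dim\Hbf_\gamma=d_\wp\sum_{i,j}r_ir_j\min(m_i,m_j)$. Writing $\min(m_i,m_j)=\#\{k: k\le m_i,\ k\le m_j\}$ turns this into $d_\wp\sum_k(\dim_{E_\wp}\gr_k)^2$, which matches $\dim\Mbf_{\overline\gamma}$.

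For (\ref{lem:Laumon4.8.4:closure}), set $X\coloneqq\Ocal_{\Mbf(E)}(\overline\gamma)\cdot\Nbf(E)$. Since $\Pbf(E)\cong\Mbf(E)\times\Nbf(E)$ as analytic manifolds and $\Ocal_{\Mbf(E)}(\overline\gamma)$ is closed, $X$ is closed in $\Pbf(E)$. It contains $\Ocal_{\Pbf(E)}(\gamma)$ because the Levi quotient is $\Pbf$-equivariant. It remains to prove density, and here I would characterise $\Ocal_{\Pbf(E)}(\gamma)$ inside $X$.

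Take $x\in X$. Then $x$ preserves the flag, and $\wp(x)$ kills each $\gr_k$, since $x|_{\gr_k}$ is conjugate to $\gamma|_{\gr_k}$. Hence $\wp(x)V_k\subset V_{k-1}$ and $\ker\wp(x)^k\supseteq V_k$. I claim that $x\in\Ocal_{\Pbf(E)}(\gamma)$ if and only if the induced maps $\wp(x)\colon\gr_{k+1}\to\gr_k$ are injective for all $k$. If they are, then $\ker\wp(x)^k=V_k$ for all $k$. The dimensions of these kernels determine the elementary divisors, so $x$ and $\gamma$ define isomorphic $E[T]$-modules, and by Lemma~\ref{lem:ConjCl-GL} there is $g$ with $g\gamma g^{-1}=x$. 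Such a $g$ carries $\ker\wp(\gamma)^k$ onto $\ker\wp(x)^k=V_k$, so $g\in\Pbf(E)$. The converse is clear.

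Thus $\Ocal_{\Pbf(E)}(\gamma)=X\cap U$, where $U\subset\Pbf$ is the Zariski-open subset defined by the non-vanishing of suitable minors of the maps $\wp(x)\colon\gr_{k+1}\to\gr_k$. The set $U$ is nonempty since $\gamma\in U$.

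The main obstacle is showing that $U$ is analytically dense in $X$. I would argue via the surjection $\phi\colon\Mbf(E)\times\Nbf(E)\to X$, $(m,n)\mapsto m^{-1}\overline\gamma m\,n$, which is the $E$-points of a morphism of varieties $\Mbf\times\Nbf\to\Pbf$. The preimage $\phi^{-1}(U)$ is the set of $E$-points of a nonempty Zariski-open subset of the smooth irreducible variety $\Mbf\times\Nbf$. Its complement is a proper closed subvariety, whose $E$-points have empty interior in the analytic topology (by the implicit function theorem for analytic manifolds). Hence $\phi^{-1}(U)(E)$ is dense in $\Mbf(E)\times\Nbf(E)$, and applying the continuous surjection $\phi$ shows that $X\cap U$ is dense in $X$. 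As a consistency check, the dimension equality in (\ref{lem:Laumon4.8.4:dim}) gives $\dim\Ocal_{\Pbf(E)}(\gamma)=\dim\Pbf-\dim\Hbf_\gamma=\dim X$, so the orbit is also open in $X$. This is the form that will be generalised later to unitary groups.
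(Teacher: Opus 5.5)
Your proof is correct, and it is considerably more self-contained than the paper's. The paper defers to \cite[Lemma~4.8.4]{Laumon:Book1} for the inclusion $\Hbf_\gamma\subset\Pbf$, the closedness of $\overline\gamma$ and the closure statement. It only writes out the computation of $\dim\Mbf_{\overline\gamma}$ via the graded pieces and $\Mbf_{\overline\gamma}\cong\bRes_{E_\wp/E}\prod_k\bGL_{E_\wp}(\gr_k)$, exactly as you do. Your direct computation of $\dim\Hbf_\gamma$ from $\dim_E\Hom_{E[T]}(E[T]/\wp^a,E[T]/\wp^b)=d_\wp\min(a,b)$ and the counting identity for $\min(m_i,m_j)$ is fine.

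The genuinely different part is (2). You characterise $\Ocal_{\Pbf(E)}(\gamma)$ inside $X$ as the locus where $\wp(x)\colon\gr_{k+1}\to\gr_k$ is injective for $1\le k<m_1$. This gives openness and density in one stroke, so the dimension equality in (1) plays no role in (2). The argument the paper relies on (Remark~\ref{rmk:dim-eq}) instead derives openness from the dimension equality via the tangent-space comparison. That is the version that survives for unitary groups (Proposition~\ref{prop:openness-levi-preimage}, Corollary~\ref{cor:V-gamma}).

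Your characterisation uses two facts special to $\GL_E(V)$. First, conjugacy is detected by the isomorphism class of the $E[T]$-module (Lemma~\ref{lem:ConjCl-GL}). Second, the conjugating element automatically lies in $\Pbf(E)$, because it carries the kernel flag of $\gamma$ onto that of $x$. For $\Urm(V,h)$ the conjugating element a priori only lives in $\Gbf(E)$. So the analogous Zariski-open locus can contain several $P$-orbits, as in the rank-$4$ Siegel example in the remark following Remark~\ref{rmk:openness-levi-preimage}, and density fails there.

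There are two small points of presentation. The maps $\wp(x)\colon\gr_{k+1}\to\gr_k$ are only defined for $x\in X$. So $U$ should be defined through the blocks of $\wp(x)$ relative to the fixed splitting, or directly as a Zariski-open subset of $\Mbf\times\Nbf$. Also, $\Mbf\times\Nbf$ is Zariski open in an affine space. Hence the density of $\phi^{-1}(U)(E)$ follows simply from the fact that a nonzero polynomial cannot vanish on a nonempty open subset of $E^D$, with no appeal to the implicit function theorem.
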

\begin{proof}
    This lemma is immediate from \cite[Lemma~4.8.4]{Laumon:Book1} and its proof, except the formula for $\dim(\Mbf_\gamma)$ in \eqref{eq:Laumon4.8.4:dim}. Let $E_\wp\coloneqq E[T]/\wp$, and set 
    \[\gr_\wp^k V\coloneqq \ker(\wp^k\mid V)/\ker(\wp^{k-1}\mid V)\qquad\text{for any }k\geqslant1.\]
    Then, $\gr_\wp^k(V)$ has a natural $E_\wp$-vector space structure, and one can check
    \begin{equation}\label{eq:Laumon4.8.4:centraliser}
        \Mbf = \prod_k\bGL_E(\gr_\wp^k V) \quad\text{and}\quad \Mbf_{\overline\gamma} = \bRes_{E_\wp/E}\left( \prod_k\bGL_{E_p}(\gr_\wp^kV) \right).
    \end{equation}
    Furthermore, we can compute the $E_\wp$-dimension of $\gr^kV$ as follows:
    \begin{equation}\label{eq:Laumon4.8.4:grading}
        \dim_{E_\wp}\left( \gr_\wp^k V \right)=\sum_{j=1}^ir_j \quad\text{if }m_{i+1}<k\leqslant m_i.
    \end{equation}
    Now, the formula for $\dim(\Mbf_{\overline\gamma})$ immediately follows.
\end{proof}
\end{subequations}

\begin{rmk}\label{rmk:dim-eq}
In the proof of \cite[Lemma~4.8.4]{Laumon:Book1}, it is also shown that the dimension equality $\dim(\Hbf_\gamma) = \dim(\Mbf_{\overline\gamma})$ implies the openness of $\Ocal_{\Pbf(E)}(\gamma)$ in $\Ocal_{\Mbf(E)}(\overline\gamma)\cdot \Nbf(E)$. We will later repeat this proof for the unitary group, and also deduce a result when the dimension equality does not hold. See Proposition~\ref{prop:openness-levi-preimage} for details.
\end{rmk}

\begin{rmk}\label{rmk:rationality-parabolic} 
    Suppose that  $\gamma$ is a unipotent element in a unitary group $G=\Urm(V,h)$, viewed as an element of $\GL_E(V)$. Then Lemma~\ref{lem:Laumon4.8.4} associates to $\gamma$ a parabolic subgroup $\Pbf\subset \Gbf_E$ defined over $E$. We note that $\Pbf$ may \emph{not} be defined over $F$. For example, let $(V,h)$ be a rank-$3$ non-degenerate hermitian $E$-space, and choose $\gamma\in G$ so that the corresponding $E[T]$-module $V$ is isomorphic to $E[T]/(T-1)^2\oplus E[T]/(T-1)$. (To construct such $\gamma$, choose a rank-$2$ split hermitian $E$-subspace $(V_0,h_0)$ of $(V,h)$, and let $\gamma$ be the image of a regular unipotent element $\gamma_0\in \Urm(V_0,h_0)$.) Then $\Pbf\subset \Gbf_E$ is a parabolic subgroup that preserves a $2$-dimensional subspace of $V$. On the other hand, $\Gbf=\Ubf(V,h)$ is quasi-split with $F$-semisimple rank~$1$, so any proper parabolic subgroup is a Borel subgroup. In particular, $\Pbf$ is not defined over $F$.
\end{rmk}

\begin{subequations}
For the rest of the section, we will associate to a primary element $\gamma\in G=\Urm(V,h)$ an \emph{$F$-rational} parabolic subgroup of $\Gbf=\Urm(V,h)$, which satisfy slightly weaker conditions than Lemma~\ref{lem:Laumon4.8.4}. When $\gamma$ is unipotent, our construction is consistent with \cite[\S{IV.2.22}]{SpringerSteinberg:ConjClasses}. To explain, let us review parabolic subgroups of a unitary group. 

\addtocounter{equation}{-1}
\begin{constr}\label{constr:unitary-parabolic}
To specify an $F$-rational parabolic subgroup $\Pbf$ of $\Gbf = \bU(V,h)$, it is equivalent to give an \emph{isotropic flag} $\Fil^\bullet V$ in $V$. Indeed, the stabiliser of an isotropic flag is an $F$-rational parabolic, and conversely, any $F$-rational parabolic fixes a unique isotropic flag. Throughout, we index isotropic filtrations by positive integers in decreasing order:
\begin{equation}
    \Fil^1 V \supseteq \Fil^2 V \supseteq \cdots \supseteq \Fil^{m-1} V \supsetneq \Fil^{m} V=0.
\end{equation}
(We allow $\Fil^j V = \Fil^{j+1} V$.) We set $\Fil^j V=0$ if $j\geqslant m$. 

Given an isotropic flag $\left( \Fil^j V \right)_{j \geq 1}$ in $V$, we can extend it to a complete flag indexed by all integers by setting
\begin{equation}\label{eq:full-flag}
    \Fil^{-j}V \coloneqq \left( \Fil^{j+1}V \right)^\perp\qquad \forall j\geqslant0.
\end{equation}
If $\Pbf \subset \Gbf$ is the $F$-rational parabolic subgroup stabilising the isotropic flag $\left( \Fil^j V \right)_{j \geq 1}$, then its base change $\Pbf_E \subset \Gbf_E \cong \bGL_E(V)$ is the stabiliser of the full flag $\left( \Fil^j V \right)_{j \in \ZZ}$.

The choice of a Levi decomposition $\Pbf = \Mbf \ltimes \Nbf$ (or equivalently, a Levi decomposition of $\Pbf_E$ defined over $F$) is equivalent to specifying a splitting of the full flag $\left( \Fil^j V \right)_{j \in \ZZ}$; that is,
\begin{equation}\label{eq:splitting}
    V = \bigoplus_{j\in \ZZ} V(j),
\end{equation}
such that $\hat h\left( V(-j) \right) = \overline{V(j)}^\vee$, where $\hat h$ is as in \eqref{eq:h-hat}. The choice of such a splitting is, in turn, determined by the following data:
\begin{enumerate}
    \item an orthogonal decomposition $(V,h)=\left( V(0), h_0 \right)\oplus \left( V', h' \right)$, where $(V',h')$ is a split hermitian $E$-space with Lagrangian $\Fil^1 V$;
    \item a splitting $\Fil^1 V = \bigoplus_{j\geqslant1} V(j)$ of the filtration $\left( \Fil^j V \right)_{j\geqslant1}$.
\end{enumerate}
Note that $V(0)$ is allowed to be zero. Now, the Levi subgroup $\Mbf$ corresponding to the splitting $V = \bigoplus_jV(j)$ is given as follows
\begin{equation}\label{eq:Levi-Unitary}
    \Mbf = \bU(V(0),h_0) \times\prod_{j\geqslant1}\bGL_E\left( V(j) \right) ,
\end{equation}
where $\bU(V(0),h_0)$ is trivial if $V(0)=0$.

Equivalently, the choice of splitting $V = \bigoplus_{j \in \ZZ} V(j)$ can be encoded by a cocharacter $\lambda \colon \GG_m \to \Gbf$ defined by
\begin{equation}\label{eq:lambda}
    \forall t \in \GG_m(F), \qquad \lambda(t) \coloneqq \bigoplus_{j \in \ZZ} (t^j \id_{V(j)}).
\end{equation}
Indeed, the direct sum decomposition $V = \bigoplus_{j \in \ZZ} V(j)$ is equivalent to the weight space decomposition for $\lambda$ with respect to the $\GG_m$-action given by $\lambda_E\colon (\GG_m)_E \to \Gbf_E \cong \bGL_E(V)$. The symmetry imposed on the splitting is equivalent to $\lambda$ being defined over $F$.

Given a cocharacter $\lambda\colon\GG_m\to\Gbf$ over $F$, we obtain the \emph{weight space decomposition} of the Lie algebra $\gfr = \bigoplus_{j\in\ZZ}\gfr(j)$, where
\begin{equation}\label{eq:wt-Lie}
    \gfr(j) = \{X\in \gfr\mid \Ad(\lambda(t))(X)=t^jX\ \forall t\in \GG_m(F)\}.
\end{equation}
We may write $\gfr(j;\lambda)$ for $\gfr(j)$ if $\lambda$ needs to be specified.
\end{constr}
\end{subequations}

\begin{subequations}
    \addtocounter{equation}{-1}
We record some basic properties of the decomposition \eqref{eq:wt-Lie}.
\begin{lem}\label{lem:nfr-wt-decomp}
    \begin{enumerate}
        \item We have $\mfr = \gfr(0)$, $\pfr = \bigoplus_{j\geqslant 0} \gfr(j)$, and $\nfr = \bigoplus_{j>0}\gfr(j)$.
        \item For any $j, j' \geqslant 0$, we have $[\gfr(j), \gfr(j')] \subseteq \gfr(j + j')$. In particular, for any $j \geqslant 0$, the subspace
            \begin{equation}
                \nfr_j \coloneqq \bigoplus_{k \geqslant j} \gfr(k)
            \end{equation}
            is an ideal of $\pfr$, and each quotient $\nfr_j / \nfr_{j+1}$ is abelian.
        \item\label{lem:nfr-wt-decomp:odd-wt} For any $j \geqslant 1$, we have $\gfr(j)_E\cong \bigoplus_{k \in \ZZ} \Hom_E\left(V(k), V(k + j)\right)$. Furthermore, if $j$ is \emph{odd} then we have the following commutative diagram
        \begin{equation}\label{eq:nfr-wt-decomp:odd-wt}
        \begin{tikzcd}[row sep=large]
            \gfr(j) \arrow[d, hook] \arrow[r, "\cong"] & \bigoplus\limits_{k\geqslant -(j-1)/2}\Hom_E\left( V(k),V(k+j) \right); \\
            \gfr(j)_E \arrow[r, "\cong"] & \bigoplus\limits_{k\in\ZZ}\Hom_E\left( V(k),V(k+j) \right) \arrow[u, two heads, "\pr"']
        \end{tikzcd}
    \end{equation}
        \item\label{lem:nfr-wt-decomp:exp} There exists an $F$-subgroup $\Nbf_2 \subseteq \Nbf$ defined by
            \begin{equation}\label{eq:N2}
                \Nbf_2(E) = \left\{ g \in \Nbf(E) \ \middle| \ (g - 1)\left(\Fil^j V\right) \subseteq \Fil^{j+2} V \text{ for all } j \in \ZZ \right\},
            \end{equation}
            which is normal in $\Pbf$ with $\Lie(\Nbf_2)=\nfr_2$.
            
            Moreover, the following isomorphism over $E$
            \begin{equation}\label{eq:nfr-wt-decomp:mod-N2-over-E}
                \begin{tikzcd}[column sep=small]
                     \Pbf_E/\Nbf_{2,E} \arrow[rr, "\cong"] & & \Mbf_E\ltimes \gfr(1)_E ; &
                     m \bar{n} \arrow[r, maps to] & (m, \bar{n}-1) 
                \end{tikzcd}
            \end{equation}
            descends to an isomorphism $\Pbf / \Nbf_2 \cong \Mbf \ltimes \gfr(1)$ over $F$, where the semidirect product is with respect to the adjoint action.
    \end{enumerate}
\end{lem}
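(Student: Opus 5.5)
The plan is to compute everything after base change to $E$, using the identification $\Gbf_E\cong\bGL_E(V)$ from \eqref{eq:unitary-BC-to-GL}, and then descend via the Galois action. Under this identification, $\lambda_E$ acts on $V$ with weight spaces $V(j)$. Hence $\Ad(\lambda(t))$ acts on $\Hom_E(V(k),V(k'))\subset\End_E(V)=\gfr_E$ by $t^{k'-k}$. This gives $\gfr(j)_E=\bigoplus_k\Hom_E(V(k),V(k+j))$ at once, proving the first half of (3).

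For (1), I would compare with Construction~\ref{constr:unitary-parabolic}. The Lie algebra of the stabiliser of the full flag $(\Fil^jV)_{j\in\ZZ}$, with $\Fil^jV=\bigoplus_{k\geqslant j}V(k)$, is the sum of the non-negative weight spaces. The Levi $\Mbf_E$ is the centraliser of $\lambda_E$, and $\Nbf_E$ has Lie algebra the positive weights. Everything here is defined over $F$, so (1) follows by taking $\Gal(E/F)$-invariants. For (2), the bracket relation $[\gfr(j),\gfr(j')]\subseteq\gfr(j+j')$ is immediate from $\Ad(\lambda(t))$ being a Lie algebra automorphism. This makes $\nfr_j$ an ideal in $\pfr$, and $[\nfr_j,\nfr_j]\subseteq\nfr_{2j}\subseteq\nfr_{j+1}$ for $j\geqslant1$ shows the quotient is abelian.

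For the odd-weight diagram in (3), I would use the $F$-structure: $\gfr=\{X\in\End_E(V): X^\star=-X\}$. By $\hat h(V(-j))=\overline{V(j)}^\vee$, the involution $X\mapsto -X^\star$ exchanges $\Hom(V(k),V(k+j))$ with $\Hom(V(-k-j),V(-k))$, that is, the pairing $k\leftrightarrow -k-j$. When $j$ is odd there is no fixed index. The summands with $k\geqslant-(j-1)/2$ form a set of representatives of the pairs. Since each $X\in\gfr(j)$ is determined by these components, and any choice of them extends uniquely, projection is an isomorphism. A dimension count (the $F$-dimension of $\gfr(j)$ equals the $E$-dimension of $\gfr(j)_E$) confirms this.

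For (4), I would define $\Nbf_2$ over $E$ by \eqref{eq:N2}. The defining condition is preserved by $g\mapsto (g^\star)^{-1}$, because the full flag is self-dual: $\Fil^{-j}=(\Fil^{j+1})^\perp$. So $\Nbf_2$ descends to $F$. It is normal in $\Pbf$ because $\Pbf$ preserves the flag, and its Lie algebra is $\nfr_2$ by the $E$-computation. For the quotient, I would write $g\in\Pbf(E)$ as $mn$ and send $n\mapsto$ the $\gfr(1)_E$-component of $n-1$. I would check that $n\mapsto (n-1)\bmod \nfr_{2,E}$ is a homomorphism $\Nbf\to\gfr(1)$ with kernel $\Nbf_2$: the cross term $(n-1)(n'-1)$ raises the filtration degree by $2$. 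The map is also $\Mbf$-equivariant. The main obstacle is descent to $F$. The unitary condition $n^\star n=1$ is nonlinear, so I must show that the weight-$1$ part of $n-1$ lies in $\gfr(1)$, i.e.\ is skew-adjoint. Modulo degree $\geqslant2$, $n^\star n=1$ gives $(n-1)^\star+(n-1)\equiv0$, which is exactly the required skew-adjointness. The isomorphism is compatible with Galois conjugation, hence descends.
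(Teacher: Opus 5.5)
Your proposal is correct. Parts (1)--(3) follow the paper's argument: you read off the weights of $\Ad\lambda$ on $\End_E(V)$ after base change, and you use the pairing of $\Hom_E(V(k),V(k+j))$ with $\Hom_E(V(-k-j),V(-k))$ under $X\mapsto -X^\star$, which has no fixed index when $j$ is odd. You are also right to claim abelianness of $\nfr_j/\nfr_{j+1}$ only for $j\geqslant1$, since $\nfr_0/\nfr_1=\mfr$, which is not abelian in general.

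The genuine difference is the descent step in (4). The paper works from $\gfr(1)$ towards $N/N_2$. It lifts each $\xi\in\gfr(1)$ to an explicit element $1+\xi+\xi'\in N$. When $\xi$ has a component on $V(0)$, it adds a correction $\eta\in\Hom_E(V(-1),V(1))$ solving $h(\eta v,w)+h(v,\eta w)+h(\xi'v,\xi'w)=0$. It then infers $F$-rationality from the fact that rational points go to rational points.

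You go the other way. Since $\star$ preserves weights, linearising $n^\star n=1$ modulo $\nfr_{2,E}$ shows that the weight-one part of $n-1$ is skew-adjoint. The same congruence $(n^\star)^{-1}-1\equiv-(n-1)^\star$ shows that the $E$-isomorphism intertwines the descent data $g\mapsto(g^\star)^{-1}$ and $X\mapsto -X^\star$.

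Your route is cleaner. It gives the scheme-theoretic isomorphism directly by Galois descent. It does not need to produce $\eta$, whose existence the paper leaves implicit and which requires surjectivity of $\Tr_{E/F}$. It also avoids the tacit density argument.

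What the paper's explicit lifts give for free is surjectivity of $N\to\gfr(1)$ on $F$-points, i.e.\ $N/N_2\cong\gfr(1)$. That is the form actually used in Lemma~\ref{lem:V-bar-gamma} and Construction~\ref{constr:measure}. From your version it needs one extra remark: $\Hrm^1(F,\Nbf_2)$ is trivial. This holds because your same linearisation identifies the successive quotients of the filtration of $\Nbf_2$ defined by $(g-1)(\Fil^jV)\subseteq\Fil^{j+k}V$ with the $F$-vector groups $\gfr(k)$, $k\geqslant2$.
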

\begin{proof}
    The first three statements, except for \eqref{eq:nfr-wt-decomp:odd-wt}, can be verified directly or by base changing to $E$ and using standard properties of $\gfr_E \cong \gl_E(V)$. To verify \eqref{eq:nfr-wt-decomp:odd-wt}, recall that $\gfr \subset \gl_E(V)$ is the annihilator of the hermitian form $h$; that is,
    \begin{equation}
        h(\xi v,w) + h(v,\xi w)=0\qquad \forall v,w\in V;
    \end{equation}
    or equivalently, $\overline\xi^t J + J\xi = 0$ using the notation from \eqref{eq:star-formula}. Thus, for any $\xi_k\in \Hom_E(V(k),V(k+j))$ with $j\ne 2k$, there exists a unique  $\xi_{-k-j}\in \Hom_E(V(-k-j),V(-k))$ such that $\xi_k+\xi_{-k-j}\in \gfr$. If $j$ is an odd positive integer, then \eqref{eq:nfr-wt-decomp:odd-wt} immediately follows since $j\ne 2k$ for any $k$.
    
    For (\ref{lem:nfr-wt-decomp:exp}), observe that the condition \eqref{eq:N2} defines an $E$-subgroup of $\Nbf_E$ that is stable under the adjoint involution $\star$, and hence it descends to an $F$-subgroup $\Nbf_2 \subset \Nbf$. The normality in $P$ can be verified after base change to $E$, which is immediate. Lastly, to show that the isomorphism \eqref{eq:nfr-wt-decomp:mod-N2-over-E} is defined over $F$, it suffices to show that it the isomorphism 
    \begin{equation}\label{eq:nfr-wt-decomp:exp-gfr-1}
        \begin{tikzcd}[column sep=small]
            \gfr(1)_E \arrow[rr, "\cong"] & & \Nbf(E)/\Nbf_2(E) ; & \xi \arrow[r, maps to]& \overline{(1+\xi)}
        \end{tikzcd}
    \end{equation}
    restricts to $\gfr(1) \riso N/N_2$. To show this, choose $\xi\in \Hom_E(V(k),V(k+1))$ for some $k\geqslant0$, and let $\xi'\in \Hom_E(V(-k-1),V(-k))$ be the unique element with $\xi+\xi'\in\gfr(1)$. If $k\ne 0$, then one can directly show that $1+\xi+\xi' \in N$. If $k=0$ then there exists $\eta\in\Hom_E(V(-1),V(1))$ such that 
    \[1+\xi+\xi'+\eta=    
    \begin{pmatrix}
    \ddots & & & & \\
        &1_{V(1)} & \xi & \eta & \\
    &  & 1_{V(0)} & \xi'& \\
    &  &   & 1_{V(-1)}& \\
    & & & & \ddots\\
    \end{pmatrix}
    \]
    is in $N$.
    Indeed, if $\eta$ satisfies $h(\eta v,w)+h(v,\eta w) + h(\xi' v, \xi' w) = 0$ for any $v,w\in V(-1)$, then the above matrix is in $N$. 
\end{proof}
\end{subequations}

\begin{rmk}
    Although the map $\GL_E(V)\to\gl_E(V)$, sending $g\mapsto g-1$, induces an isomorphism from the unipotent subvariety to the nilpotent cone, it may not send a unipotent element of $G$ to an element in $\gfr$. The proof of Lemma~\ref{lem:nfr-wt-decomp}(\ref{lem:nfr-wt-decomp:exp}) works out because $\Nbf_2$ contains the derived subgroup of $\Nbf$.
\end{rmk}
To define an isotropic filtration, we need the following lemma.
\begin{lem}\label{lem:Fil1}
    Fix $\gamma\in G$, and view $V$ as an $E[T]$-module via $\gamma$. Fix a monic irreducible $\wp\in E[T]$ with $\wp=\overline\wp^\vee$, and set 
    \[
    W\coloneqq\sum_{k\geqslant1} \left( \ker(\wp^k)\cap \im(\wp^k) \right) \subset V.
    \]
    Then $W$ is totally isotropic.
\end{lem}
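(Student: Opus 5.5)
We need to show $h(x,y)=0$ for $x\in\ker(\wp^k)\cap\im(\wp^k)$ and $y\in\ker(\wp^l)\cap\im(\wp^l)$, with $k,l\geqslant1$ arbitrary. The key input is an adjoint formula for $\wp(\gamma)$ with respect to $h$, obtained from $\gamma^\star=\gamma^{-1}$ and the self-duality $\wp=\overline\wp^\vee$.

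First we compute the adjoint. For $f\in E[T]$ we have $f(\gamma)^\star=\overline f(\gamma^\star)=\overline f(\gamma^{-1})$, since $\star$ is $\overline{(\bullet)}$-semilinear on scalars. With $d\coloneqq\deg\wp$, the identity in the proof of Proposition~\ref{prop:elem-div-self-dual} gives
\[\overline\wp(\gamma^{-1})=\overline\wp(0)\gamma^{-d}\,\overline\wp^\vee(\gamma)=\overline\wp(0)\gamma^{-d}\wp(\gamma).\]
Hence
\[\wp(\gamma)^\star=u\,\wp(\gamma),\qquad u\coloneqq\overline\wp(0)\gamma^{-d},\]
where $u$ is invertible and commutes with $\wp(\gamma)$. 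Raising to powers,
\[(\wp(\gamma)^k)^\star=u^k\wp(\gamma)^k,\]
so $h(\wp^k v,w)=h(v,u^k\wp^k w)$ for all $v,w\in V$.

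Now take $x=\wp^k v$ with $\wp^k x=0$, and $y=\wp^l w$ with $\wp^l y=0$. By symmetry (using $h(y,x)=\overline{h(x,y)}$) we may assume $k\geqslant l$. Then
\[h(x,y)=h(\wp^k v,y)=h(v,u^k\wp^k y)=0,\]
because $\wp^k y=\wp^{k-l}\wp^l y=0$. Since $h$ is sesquilinear, pairwise orthogonality of these generators extends to their sum, so $W$ is totally isotropic.

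The only delicate step is the adjoint computation: one must track the conjugation correctly so that it produces $\overline\wp$, and then use the hypothesis $\wp=\overline\wp^\vee$. The rest is immediate.
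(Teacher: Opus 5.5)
Your proof is correct and follows essentially the same argument as the paper. Both reduce to the orthogonality of $\ker(\wp^k)\cap\im(\wp^k)$ and $\ker(\wp^l)\cap\im(\wp^l)$ with $k\geqslant l$. Both then move $\wp(\gamma)^k$ across $h$ using $\gamma^\star=\gamma^{-1}$ and rewrite $\overline\wp(\gamma^{-1})=\overline\wp(0)\gamma^{-d}\wp(\gamma)$ via $\wp=\overline\wp^\vee$, concluding from $\ker(\wp^l)\subseteq\ker(\wp^k)$. Writing this as $\wp(\gamma)^\star=u\,\wp(\gamma)$ is just a cleaner packaging of the same computation.
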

\begin{proof}
    It suffices to show that for any $v\in \ker(\wp^k)\cap \im(\wp^k)$ and $v'\in \ker(\wp^{k'})\cap \im(\wp^{k'})$, we have $h(v,v')=0$. We may assume $k\geqslant k'$ without loss of generality, and write $v = \wp(\gamma)^kw$ for some $w\in V$. Then we have
    \[
        h(\wp(\gamma)^kw,v')  =  h(w, \overline\wp(\gamma^{-1})^kv')), 
    \]
    since $\gamma^\star = \gamma^{-1}$. 
    
    Now, note that $\overline\wp(\gamma^{-1})^k v'=0$, since we have 
    \[\overline\wp(\gamma^{-1}) = \overline\wp(0)\gamma^{-d_\wp}\cdot\overline\wp^\vee(\gamma) = \overline\wp(0)\gamma^{-d_\wp}\cdot\wp(\gamma)\]
    and $v'\in \ker(\wp^{k'})\subseteq \ker(\wp^k)$. This shows $h(v,v') = h(w,0)=0$, as desired.
\end{proof}

\begin{defn}\label{def:assoc-fil}
    Let $\gamma\in G$ be a $\wp$-primary element for a monic irreducible $\wp\in E[T]$ with $\wp=\overline\wp^\vee$, and endow $V$ with the $E[T]$-module structure defined by $\gamma$. For any $j\geqslant 1$, set
    $$ \Fil^j V\coloneqq \sum_{k\geqslant 1}\left( \ker(\wp^k)\cap \im(\wp^{k+j-1}) \right). $$
    This defines an \emph{isotropic flag}, since $\Fil^1 V = W$ is totally isotropic by Lemma~\ref{lem:Fil1}.

    We extend it to a full filtration $\left( \Fil^j V \right)_{j\in \ZZ}$ by the recipe \eqref{eq:full-flag}, and \emph{choose} a splitting $V = \bigoplus_{j\in\ZZ}V(j)$ as explained in the paragraph below \eqref{eq:splitting}. This choice defines an $F$-rational cocharacter $\lambda\colon \GG_m\to \Gbf$ via \eqref{eq:lambda}, a parabolic subgroup with Levi decomposition $\Pbf = \Mbf\ltimes\Nbf$, and the grading $\gfr = \bigoplus_j\gfr(j)$ as in \eqref{eq:wt-Lie}.
\end{defn}

\begin{rmk}\label{rmk:parabolic-regular-case}
    Suppose in addition that $\gamma\in G$ is \emph{regular}; that is, $V\cong E[T]/\wp^m$. Then the base change $\Pbf_E$ of the parabolic subgroup $\Pbf$ constructed in Definition~\ref{def:assoc-fil} coincides with the parabolic subgroup of $\Gbf_E$ described in Lemma~\ref{lem:Laumon4.8.4}.
\end{rmk}

The following property is one of the main motivations for this construction
\begin{lem}\label{lem:centraliser-parabolic}
     In the setting of Definition~\ref{def:assoc-fil}, $\Pbf$ contains $\Gbf_\gamma$. 
\end{lem}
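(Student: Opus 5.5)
Since $\Pbf$ is the stabiliser in $\Gbf$ of the isotropic flag $(\Fil^jV)_{j\geqslant1}$, it suffices to show that $\Gbf_\gamma$ stabilises each $\Fil^jV$. Because $\Gbf_\gamma$ is smooth (Proposition~\ref{prop:unitary-centralisers}(\ref{prop:unitary-centralisers:sm-conn})), the inclusion $\Gbf_\gamma\subseteq\Pbf$ of closed subschemes can be checked on $\scl F$-points, or equivalently after base change to $E$ on $R$-points for arbitrary $E$-algebras $R$. I will work with $\Gbf_{E,\gamma}$ and $\Pbf_E$ inside $\Gbf_E\cong\bGL_E(V)$, using the identification \eqref{eq:unitary-BC-to-GL}.

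By \eqref{eq:GL-centraliser}, $\Gbf_{E,\gamma}(R)=(C_\gamma\otimes_ER)^\times$, where $C_\gamma=\End_{E[T]}(V)$. Any $g\in C_\gamma\otimes_ER$ commutes with $\wp(\gamma)^k$ for every $k$. Hence $g$ preserves $\ker(\wp^k)\otimes_ER$ and maps $\im(\wp^{k+j-1})\otimes_ER$ into itself. Taking kernels and images commutes with the flat base change $E\to R$, so $g$ preserves each $\bigl(\ker(\wp^k)\cap\im(\wp^{k+j-1})\bigr)\otimes_E R$. It therefore preserves their sum $\Fil^jV\otimes_ER$ for every $j\geqslant1$.

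For the negative part of the full flag, recall that $\Pbf_E$ is the stabiliser of the full flag $(\Fil^jV)_{j\in\ZZ}$ in Construction~\ref{constr:unitary-parabolic}. Since $\Fil^{-j}V=(\Fil^{j+1}V)^\perp$ by \eqref{eq:full-flag}, stability follows by duality. For $g\in G_\gamma$ (or any point of $\Gbf_E$, since $\Gbf_E$ preserves $\Psi_h$ in the appropriate sense), preserving $\Fil^{j+1}V$ forces preserving its orthogonal complement. Alternatively, it is enough to note that $\Pbf$ is already cut out in $\Gbf$ by stabilising the isotropic part $(\Fil^jV)_{j\geqslant1}$, so the $j\geqslant1$ statement suffices.

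The only point needing care is the scheme-theoretic versus pointwise claim, namely that kernels and images commute with base change. This holds because $E\to R$ is flat, and it is routine. Alternatively, smoothness of $\Gbf_\gamma$ lets us just check $\scl F$-points, where $\Gbf_\gamma(\scl F)$ sits inside $(C_\gamma\otimes_E\scl F)^\times$ and the argument above applies verbatim. I expect no real obstacle; the content is simply that $\Fil^\bullet V$ is defined purely in terms of the $E[T]$-module structure.
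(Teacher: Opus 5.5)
Your proposal is correct and is essentially the paper's argument: every element of the commutant $C_\gamma$ commutes with $\wp(\gamma)$, hence preserves each $\ker(\wp^k)\cap\im(\wp^{k+j-1})$ and thus the isotropic flag defining $\Pbf$, and your extra bookkeeping with $R$-points is harmless. One small caution: under the identification $\Gbf_E\cong\bGL_E(V)$, an arbitrary point of $\Gbf_E$ stabilising $\Fil^{j+1}V$ need not stabilise $(\Fil^{j+1}V)^\perp$, so for the negative steps of the full flag argue instead that $C_\gamma$ is $\star$-stable (because $\gamma^\star=\gamma^{-1}$), or use Lemma~\ref{lem:assoc-fil}(\ref{lem:assoc-fil:full})---though, as you note, the steps $j\geqslant1$ already suffice when you check inside $\Gbf$ itself.
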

\begin{proof}
    Note that the commutant $C_\gamma = \End_{E[T]}(V)$ preserves $\ker(\wp^k)$ and $\im(\wp^{k+j-1})$, so it preserves the isotropic flag. Hence, we have $\Gbf_\gamma\subset \Pbf$. 
\end{proof}

\begin{subequations}
    \addtocounter{equation}{-1}
\begin{exa}\label{ex:assoc-fil}
Suppose that $V\cong \left( E[T]/\wp^m \right)^r$. Then for any $j\geqslant1$, we have 
\begin{equation}
    \Fil^j V = 
    \ker(\wp^k) ,\quad \text{where }k = \left\lfloor \frac{m-j+1}{2} \right\rfloor.
\end{equation}
That is, $\Fil^{m-1} V = \Fil^{m-2} V = \ker(\wp)$, $\Fil^{m-3} V = \Fil^{m-4} V = \ker(\wp^2)$, and so on.

The associated graded pieces for $1\leqslant j \leqslant m-1$ are given by
\begin{equation}
    \gr^j V = 
    \begin{cases}
        \ker(\wp^k)/\ker(\wp^{k-1}) & \text{if } k = \frac{m-j+1}{2} \text{ is an integer};\\
        0 & \text{otherwise}.
    \end{cases}
\end{equation}
That is, $\gr^j V$ is nonzero only when $j \equiv m-1 \pmod{2}$.

More generally, the filtration and the associated grading on  $V \cong \bigoplus_i \left( E[T]/\wp^{m_i} \right)^{r_i}$ are given by the direct sum of the filtrations and gradings on each summand.
\end{exa}
\end{subequations}

\begin{lem}\label{lem:assoc-fil}
    In the setting of Definition~\ref{def:assoc-fil}, write $V \cong \bigoplus_{i=1}^s \left( E[T]/\wp^{m_i} \right)^{r_i}$, where $m_1 > \cdots > m_s > 0$ and $r_i > 0$.
    \begin{enumerate}
        \item\label{lem:assoc-fil:full} For all $j \in \ZZ$, we have
            \[
            \Fil^{-j} V = \left( \Fil^{j+1} V \right)^\perp = \sum_{k \geqslant 1} \left( \ker(\wp^k) \cap \im(\wp^{k-j-1}) \right).
            \]
            In other words, the formula for $\Fil^j V$ extends naturally to all $j \in \ZZ$.
        \item\label{lem:assoc-fil:gr-dim} Set $I \coloneqq \{1, \ldots, s\}$, and partition it as follows:
            \[
                I_+ \coloneqq \{ i \in I \mid m_i \text{ is even} \}, \qquad
                I_- \coloneqq \{ i \in I \mid m_i \text{ is odd} \}.
            \]
            Then, for any $j \in \ZZ$, the graded piece $\gr^j V$ has a natural structure of an $E_\wp$-vector space, where $E_\wp \coloneqq E[T]/\wp$, and its $E_\wp$-dimension is given by:
            \[
                \dim_{E_\wp} \gr^j V =
                \begin{cases}
                    \sum\limits_{\substack{i \in I_-,\\ m_i \geqslant |j| + 1}} r_i & \text{if } j \text{ is even}, \\[2ex]
                    \sum\limits_{\substack{i \in I_+,\\ m_i \geqslant |j| + 1}} r_i & \text{if } j \text{ is odd}.
                \end{cases}
            \]
    \end{enumerate}
\end{lem}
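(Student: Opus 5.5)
The plan is to treat the filtration as given by one formula for every $j\in\ZZ$ and to reduce all dimension counts to cyclic modules. For every $j\in\ZZ$ set
\[
F^jV\coloneqq\sum_{k\geqslant1}\bigl(\ker(\wp^k)\cap\im(\wp^{k+j-1})\bigr),
\]
with the convention $\im(\wp^l)=V$ for $l\leqslant0$. Then $F^jV=\Fil^jV$ for $j\geqslant1$ by Definition~\ref{def:assoc-fil}.

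The construction $M\mapsto F^jM$ is compatible with direct sums of $E[T]$-modules, because kernels, images, intersections and sums of submodules all are. I would therefore fix an $E[T]$-decomposition $V\cong\bigoplus_i(E[T]/\wp^{m_i})^{r_i}$; it need not be orthogonal, but that does not matter for dimension counting. On a cyclic summand $M=E[T]/\wp^m$ we have $\ker(\wp^k)=\wp^{\max(m-k,0)}M$ and $\im(\wp^l)=\wp^{\min(\max(l,0),m)}M$. Hence $F^jM=\wp^{c_m(j)}M$, where
\[
c_m(j)=\min\Bigl(m,\max\Bigl(0,\min_{k\geqslant1}\max(m-k,\,k+j-1)\Bigr)\Bigr)
=\min\Bigl(m,\max\Bigl(0,\Bigl\lceil\tfrac{m+j-1}{2}\Bigr\rceil\Bigr)\Bigr).
\]
I would double-check this against Example~\ref{ex:assoc-fil}. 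In particular $\dim_{E_\wp}F^jM=m-c_m(j)$.

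For part~(\ref{lem:assoc-fil:full}), I first prove the inclusion $F^{-j}V\subseteq(F^{j+1}V)^\perp$ by the argument of Lemma~\ref{lem:Fil1}. Take
\[
v\in\ker(\wp^k)\cap\im(\wp^{a}),\quad a=k-j-1,\qquad v'\in\ker(\wp^{k'})\cap\im(\wp^{b}),\quad b=k'+j.
\]
Then $a+b=k+k'-1$, so either $b\geqslant k$ or $a\geqslant k'$. If $b\geqslant k$, write $v'=\wp(\gamma)^bw'$. Using $\gamma^\star=\gamma^{-1}$ and $\overline\wp(\gamma^{-1})=(\text{unit})\cdot\wp(\gamma)$, which holds because $\wp=\overline\wp^\vee$, we get
\[
h(v,v')=h\bigl((\text{unit})\cdot\wp(\gamma)^bv,\,w'\bigr)=0,
\]
since $\wp(\gamma)^b$ kills $v$. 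The case $a\geqslant k'$ is symmetric, and when an exponent is $\leqslant0$ the corresponding element lies in a kernel that the other exponent already kills, so the same argument applies.

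Equality then follows by counting dimensions, using that $h$ is non-degenerate. It suffices to check $\dim F^{-j}M+\dim F^{j+1}M=\dim M$ on each cyclic summand, that is, $c_m(-j)+c_m(j+1)=m$. The two unclipped arguments $\tfrac{m-j-1}{2}$ and $\tfrac{m+j}{2}$ sum to $m-\tfrac12$, so exactly one of them is an integer. Hence their ceilings sum to $m$, and the clipping to $[0,m]$ is symmetric, so the identity survives it.

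For part~(\ref{lem:assoc-fil:gr-dim}), each $\gr^jV$ is a subquotient of $\wp^cM/\wp^{c+1}M$ on every cyclic summand, which gives the $E_\wp$-structure. On $E[T]/\wp^m$, $\gr^j$ has $E_\wp$-dimension $c_m(j+1)-c_m(j)\in\{0,1\}$. This difference equals $1$ exactly when $m+j-1$ is even and $|j|\leqslant m-1$, in agreement with Example~\ref{ex:assoc-fil}. Summing over summands with multiplicities $r_i$ gives the stated formula: for even $j$ one needs $m_i$ odd, for odd $j$ one needs $m_i$ even, and in both cases $m_i\geqslant|j|+1$.

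I expect the main obstacle to be the orthogonality inclusion in part~(\ref{lem:assoc-fil:full}), specifically handling the two index cases and the edge cases with non-positive exponents cleanly. The closed formula for $c_m(j)$ also needs to be checked carefully near the clipping boundaries.
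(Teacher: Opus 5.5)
Your proposal is correct and follows essentially the same route as the paper. The paper also proves the inclusion $F^{-j}V\subseteq (F^{j+1}V)^\perp$ via the adjoint trick of Lemma~\ref{lem:Fil1}, then gets equality and the graded dimensions by counting on cyclic summands. Your explicit formula $c_m(j)=\min(m,\max(0,\lceil (m+j-1)/2\rceil))$ makes precise the paper's brief symmetry-of-graded-pieces remark. Your case split on $a+b=k+k'-1$ is also slightly cleaner than the paper's. The exponent you move across $h$ is automatically $\geqslant k\geqslant 1$ or $\geqslant k'\geqslant 1$, so your remark about non-positive exponents is unnecessary.
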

\begin{proof}
    To prove (\ref{lem:assoc-fil:full}), it suffices to show that
    \[
    \left( \Fil^{j+1} V \right)^\perp \supseteq \sum_{k\geqslant 1}\left( \ker(\wp^k)\cap \im(\wp^{k-j-1}) \right),
    \]
    since the dimension of successive quotients of both sides match. Indeed, the full filtration $\left( \Fil^j V \right)$ by $\Fil^{-j}V\left( \Fil^{j+1} V \right)^\perp$ satisfies $\dim_E \gr^j V = \dim_E \gr^{-j}V$, while the same symmetry is satisfied by the the filtration defined by the right-hand side, which can be checked summand by summand.

    To check the above inclusion, we need to show that $h(v,v')=0$ for any $v\in \ker(\wp^k)\cap\im(\wp^{k+j})$ and $v'\in \ker(\wp^{k'})\cap \im(\wp^{k'-j-1})$. By assumption, we have
    \begin{itemize}
        \item $\exists w\in V$ such that $v=\wp(\gamma)^{k+j}w$ and $\wp(\gamma)^{2k+j}w=0$; and
        \item $\exists w'\in V$ such that $v'=\wp(\gamma)^{k'-j-1}w'$ and $\wp(\gamma)^{2k'-j-1}w'=0$.
    \end{itemize}
    As in the proof of Lemma~\ref{lem:Fil1}, it suffices to show that either $w$ or $w'$ is annihilated by $\wp(\gamma)^{k+k'-1}$. If $k+k'-1 \geqslant 2k'-j-1$, then $\wp(\gamma)^{k+k'-1} w' = 0$. Otherwise, $k' \geqslant k + j + 1$, which again implies $k+k'-1 \geqslant 2k+j$. This proves the desired inclusion, and hence claim~(\ref{lem:assoc-fil:full}).

    For (\ref{lem:assoc-fil:gr-dim}), observe that for each summand $E[T]/\wp^{m_i}$, the filtration $\Fil^j V$ is nonzero only for $j \leq m_i-1$, and the associated graded pieces are nonzero only for $j$ of the same parity as $m_i-1$. The dimension formula then follows by summing over all $i$ with the appropriate parity and range.
\end{proof}

\begin{subequations}
    \addtocounter{equation}{-1}
Let us record useful consequences of Lemma~\ref{lem:assoc-fil}.
\begin{cor}\label{cor:assoc-fil}
    In the setting of Definition~\ref{def:assoc-fil}, the following properties hold:
    \begin{enumerate}
        \item\label{cor:assoc-fil:Levi} The image $\overline\gamma \in M$ of $\gamma$ under the Levi quotient is closed, both as an element of $M$ and as an element of $\Mbf(E)$.
        \item \label{cor:assoc-fil:dim} We have
        \begin{equation}\label{eq:centraliser-dim}
             \dim (\Mbf_{\overline\gamma}) = d_\wp\sum_{i=1}^s(m_i-m_{i+1})\left( \Big(\sum_{i'\leqslant i;\ i' \in I_-} r_{i'} \Big)^2+ \Big(\sum_{i'\leqslant i;\ i' \in I_+} r_{i'} \Big)^2 \right)
        \end{equation}
        \item\label{cor:assoc-fil:N2} The image of $\gamma$ in $P/N_2 \cong M\ltimes \gfr(1)$ coincides with $\overline\gamma\in M$, where $\Nbf_2$ is defined in \eqref{eq:N2}.
    \end{enumerate}
\end{cor}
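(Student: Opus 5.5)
All three parts will be read off the $E[T]$-module structure on the graded pieces $\gr^jV\cong V(j)$. We use the splitting $V=\bigoplus_j V(j)$ chosen in Definition~\ref{def:assoc-fil}. The element $\gamma$ lies in $\Gbf_\gamma\subset\Pbf$ by Lemma~\ref{lem:centraliser-parabolic}, so it preserves $\Fil^\bullet V$ and commutes with $\wp(\gamma)$. Hence $\overline\gamma=(\gr^j\gamma)_j$, and $\overline\gamma_E$ is the induced action of $\gamma$ on $\bigoplus_j\gr^jV$ under \eqref{eq:Levi-Unitary}.

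\textbf{Key observation.} $\wp(\gamma)$ maps $\Fil^jV$ into $\Fil^{j+2}V$. This follows from the formula of Lemma~\ref{lem:assoc-fil}(\ref{lem:assoc-fil:full}): $\wp$ sends $\ker(\wp^k)\cap\im(\wp^{k+j-1})$ into $\ker(\wp^{k-1})\cap\im(\wp^{(k-1)+(j+2)-1})$. Consequently $\wp(\overline\gamma)$ acts as $0$ on every $\gr^jV$.

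\textbf{Part (\ref{cor:assoc-fil:Levi}).} Since $\wp(\overline\gamma)=0$, the minimal polynomial of $\overline\gamma$ on each factor $\GL_E(\gr^jV)$ divides the irreducible $\wp$. By Lemma~\ref{lem:closed}(\ref{lem:closed:GL}), $\overline\gamma$ is then closed in $\Mbf(E)=\prod_j\GL_E(\gr^jV)$; this uses only that a product of closed orbits is closed. For closedness in $M$, Lemma~\ref{lem:closed}(\ref{lem:closed:unitary}) does not apply directly, since $M$ is a product of unitary and general linear groups rather than a single unitary group. I would instead argue factor by factor.
\begin{itemize}
\item On the factor $\GL_E(V(j))$ with $j\geqslant1$, the group $M$ is a general linear group over $E$, and the previous argument applies.
\item On the factor $\Urm(V(0),h_0)$, the argument of Lemma~\ref{lem:closed}(\ref{lem:closed:unitary}) goes through verbatim via Proposition~\ref{prop:stable-conjugacy}.
\end{itemize}

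\textbf{Part (\ref{cor:assoc-fil:dim}).} Because $\wp$ kills $\gr^jV$, each graded piece is an $E_\wp$-vector space, and $\Mbf_{\overline\gamma,E}=\prod_j\bRes_{E_\wp/E}\bGL_{E_\wp}(\gr^jV)$ as in \eqref{eq:Laumon4.8.4:centraliser}. Its dimension over $E$ is $d_\wp\sum_{j\in\ZZ}(\dim_{E_\wp}\gr^jV)^2$, and the $F$-dimension of $\Mbf_{\overline\gamma}$ equals this. We substitute the formula of Lemma~\ref{lem:assoc-fil}(\ref{lem:assoc-fil:gr-dim}) and regroup the sum over $j$ by the intervals $m_{i+1}\leqslant |j|<m_i$ together with parity.
\begin{itemize}
\item In such an interval, $\dim\gr^jV$ equals $A_i:=\sum_{i'\leqslant i,\,i'\in I_-}r_{i'}$ for $j$ even and $B_i:=\sum_{i'\leqslant i,\,i'\in I_+}r_{i'}$ for $j$ odd.
\item Over all $j\in\ZZ$ with $m_{i+1}\leqslant|j|<m_i$, the even values of $j$ and the odd values of $j$ should each number exactly $m_i-m_{i+1}$.
\end{itemize}
This count gives \eqref{eq:centraliser-dim}. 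The bookkeeping at the boundary of each interval, in particular $j=0$ and the parities of the endpoints, is the only delicate point; I would verify it on a single block $E[T]/\wp^m$, where it amounts to the count $m$ for both parities, and then sum.

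\textbf{Part (\ref{cor:assoc-fil:N2}).} Under $P/N_2\cong M\ltimes\gfr(1)$, the $\gfr(1)$-component of $\gamma$ is its weight-$1$ part $\sum_k\pr_{V(k+1)}\circ\gamma|_{V(k)}$. This is the map $\gr^kV\to\gr^{k+1}V$ induced by $\gamma$ modulo $\Fil^{k+2}V$, so it suffices to show $(\gamma-\overline\gamma)\Fil^k V\subseteq\Fil^{k+2}V$. By Lemma~\ref{lem:assoc-fil}(\ref{lem:assoc-fil:gr-dim}), $\gr^kV$ and $\gr^{k+1}V$ have dimensions given by different parities $I_\pm$. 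One route is to exploit that $\gamma$ respects the decomposition into blocks $E[T]/\wp^{m_i}$, on each of which only one parity of $j$ occurs. However, the chosen splitting need not be compatible with that decomposition, so this route does not immediately work. The cleaner route is an intrinsic statement: the subspace $\Fil^kV$ decomposes as a direct sum of intersections with the blocks, each block filtration jumps only by $2$, and $\gamma$ preserves each block. Hence $\gamma\Fil^kV\subseteq\Fil^kV$ and $\gamma$ acts on $\Fil^kV/\Fil^{k+2}V$ in a way that respects the parity decomposition. Making this intrinsic step precise is the main obstacle, and I would resolve it using the key observation $\wp(\gamma)\Fil^j\subseteq\Fil^{j+2}$ together with the block decomposition of each $\Fil^j$.
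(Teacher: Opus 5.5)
\textbf{The gap is in part (3), and it cannot be closed the way you propose.} Block-invariance only shows that $\gamma$ preserves the decomposition of $\Fil^kV/\Fil^{k+2}V$ induced by a cyclic decomposition of $V$. But $\pr_{V(k+1)}\circ\gamma|_{V(k)}=0$ says something different: that the image of the \emph{chosen} $V(k)$ in $\Fil^kV/\Fil^{k+2}V$ is $\gamma$-stable. Nothing in your argument controls this, and $\wp(\gamma)\Fil^jV\subseteq\Fil^{j+2}V$ only sees the flag.

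In fact the claim fails for a general admissible splitting. Changing the splitting by $n\in N$ with image $\phi\in\gfr(1)\cong N/N_2$ changes the weight-one part of $\gamma$ by $\overline\gamma\phi-\phi\overline\gamma$. This is nonzero unless $\phi$ is $E_\wp$-linear. Concretely, take $V\cong A\oplus B$ with $A=E[T]/\wp^2$, $B=E[T]/\wp$ and $d_\wp\geqslant2$; such $\gamma$ exist already for separable $\wp$ of degree $2$. Then $V(1)=\Fil^1V=\wp A$ and $\Fil^0V=\wp A\oplus B$, so $V(0)$ may be the graph of any $E$-linear $\psi\colon B\to\wp A$. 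The weight-one part of $\gamma$ on $V(0)$ is then $\gamma\psi-\psi\gamma$, which is nonzero when $\psi$ is not $E_\wp$-linear.

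Equivalently, by the computation in Lemma~\ref{lem:V-bar-gamma}, the elements $n^{-1}\gamma n$ ($n\in N$) are $\wp$-primary with the same flag and admit the same splitting. Yet their $\gfr(1)$-components run through a coset of $\overline\Vcal_0=(1-\Ad(\overline\gamma^{-1}))(\gfr(1))$, which is nonzero when $d_\wp\geqslant2$ and $I_+,I_-\neq\emptyset$. So (3) holds for every admissible splitting only when $d_\wp=1$ (then $\overline\gamma$ is scalar on each $V(k)$) or when one of $I_\pm$ is empty (then $\gfr(1)=0$). The paper's proof has the same defect: it conjugates the splitting into a block-compatible one, but conjugating the splitting by $u\in\Nbf(E)$ amounts to replacing $\gamma$ by $u^{-1}\gamma u$ with the old splitting.

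\textbf{What is true.} The weaker statement holds, and it is all that Lemma~\ref{lem:V-bar-gamma} uses: the $\gfr(1)$-component of $\gamma$ lies in $\overline\Vcal_0$. Equivalently, (3) holds after replacing the splitting by a suitable $N$-translate. To see this, compare with a block-compatible splitting over $E$, for which your parity argument does give weight-one part $0$. This puts the $\gfr(1)$-component of $\gamma$ in $(1-\Ad(\overline\gamma^{-1}))(\gfr(1)_E)\cap\gfr(1)=(1-\Ad(\overline\gamma^{-1}))(\gfr(1))$. A suitable $n\in N$ then moves the $F$-rational splitting to one for which the component vanishes.

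\textbf{Parts (1) and (2).} These follow the paper's route: $\wp$ is the minimal polynomial of $\overline\gamma$ on each nonzero $\gr^jV$, then Lemma~\ref{lem:closed} is applied factor by factor, and $\dim\Mbf_{\overline\gamma}=d_\wp\sum_j(\dim_{E_\wp}\gr^jV)^2$. Your observation $\wp(\gamma)\Fil^jV\subseteq\Fil^{j+2}V$ is a clean way to get the minimal polynomial.

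Your bookkeeping for (2) is wrong as stated, however. For $m_1=4$, $m_2=1$, the range $1\leqslant|j|<4$ contains four odd and two even $j$, not three of each. Also, ``check one block, then sum'' is not legitimate, since $\sum_j(\dim\gr^jV)^2$ is not additive over blocks. The formula is still correct. For the even-$j$ sum, use only the breakpoints $m_i$ with $i\in I_-$, which are all odd. Between consecutive such breakpoints $a<b$ there are exactly $b-a$ even $j$ with $a\leqslant|j|<b$, and below the smallest one $a$ there are exactly $a$ even $j$ with $|j|<a$. Then refine to all the $m_i$; this is harmless because the even-degree dimensions are constant across breakpoints from $I_+$. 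The odd-$j$ sum is handled symmetrically.
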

\begin{proof}
    We first prove (\ref{cor:assoc-fil:Levi}). Recall from \eqref{eq:Levi-Unitary} that $M = \Urm(\gr^0 V, h_0) \times \prod_{j\geqslant1}\GL_E\left( \gr^j V \right)$, and $\Mbf(E) = \prod_{j\in\ZZ}\GL_E\left( \gr^j V \right)$. Observe that the action of $\overline\gamma$ on $\gr^j V$ has minimal polynomial $\wp$ for each $j$. (This follows from the definition of the isotropic flag $\left( \Fil^j V \right)_{j\geqslant1}$ for positive $j$, and from Lemma~\ref{lem:assoc-fil} for $j\leqslant0$). By Lemma~\ref{lem:closed}(\ref{lem:closed:GL}), the image of $\overline\gamma$ in $\GL_E\left( \gr^j V \right)$ is closed. Consequently, $\overline\gamma$ is closed as an element of $\Mbf(E)$. Furthermore, the inclusion $M \hookrightarrow \Mbf(E)$ restricts to the identity map on $\GL_E\left( \gr^j V \right)$ for $j\geqslant1$ and to the natural inclusion $\Urm(\gr^0 V, h_0) \hookrightarrow \GL_E\left( \gr^0 V \right)$ at $j=0$. Thus, $\overline\gamma$ is also a closed element of $M$ by Lemma~\ref{lem:closed}(\ref{lem:closed:unitary}), completing the proof of (\ref{cor:assoc-fil:Levi}).

    To prove (\ref{cor:assoc-fil:dim}), note that the minimal polynomial of $\overline\gamma$ on $\gr^j(V)$ is $\wp$ for each $j\in\ZZ$, so we have 
    \begin{equation}
        \Mbf_{E,\overline\gamma} \cong \Res_{E_\wp/E}\Big(  \prod_{j\in\ZZ}\GL_{E_\wp}\left( \gr^j V \right)\Big).
    \end{equation}
    Therefore, its dimension is $[E_\wp:E]\cdot\sum_{j\in\ZZ}\left( \dim_{E_{\wp}}\left( \gr^j V \right) \right)^2$, which yields (\ref{cor:assoc-fil:dim}) by Lemma~\ref{lem:assoc-fil}(\ref{lem:assoc-fil:gr-dim}).
    
    Let us prove (\ref{cor:assoc-fil:N2}). Using the fixed Levi decomposition $\Pbf=\Mbf\ltimes\Nbf$ (or equivalently, the fixed splitting of the filtration $V = \bigoplus V(j)$), it suffices to show that $\overline\gamma^{-1}\gamma \in \Nbf_2(E)$, since $\overline\gamma^{-1}\gamma\in N$ and $N_2=N\cap \Nbf_2(E)$. Because $\Nbf_2(E)$ is a unipotent subgroup of a standard parabolic subgroup in $\GL_E(V)$, this is equivalent to showing that the corresponding endomorphism satisfies
    \begin{equation}
        \overline\gamma^{-1}\gamma - 1 = \overline\gamma^{-1}(\gamma-\overline\gamma) \in \nfr_{2,E}.
    \end{equation}
    We identify $\nfr_{2,E}$ with the space of nilpotent endomorphisms of $V$ that map $\Fil^j V$ into $\Fil^{j+2} V$ for all $j$. Because $\overline\gamma$ is an automorphism that strictly preserves each graded piece $V(j)$, it suffices to prove that $(\gamma-\overline\gamma)\left( \Fil^j V \right) \subseteq \Fil^{j+2} V$. 
    
    Crucially, the property of belonging to the ideal $\nfr_{2,E}$ is invariant under conjugation by any element $g \in \Pbf(E)$. Fix an $E[T]$-module isomorphism $V \cong \bigoplus_i \left( E[T]/\wp^{m_i} \right)^{r_i}$. Any two splittings of the same filtration are conjugate by an element of the unipotent radical of the parabolic in $\GL_E(V)$. Therefore, we may choose a conjugating element in $\Pbf(E)$ such that our splitting, and the corresponding action by $\overline\gamma$, precisely respects the $E[T]$-cyclic direct sum decomposition. 
    
    This conjugation allows us to treat $\gamma-\overline\gamma$ as a nilpotent endomorphism that stabilises each $E[T]$-cyclic summand. Within each isolated summand $E[T]/\wp^{m_i}$, the operator $\gamma-\overline\gamma$ acts by shifting the $\wp$-adic kernel filtration by exactly one step. As demonstrated in Example~\ref{ex:assoc-fil}, a one-step shift in the $\wp$-adic kernel filtration corresponds to a jump of exactly $2$ in the indexing of the isotropic flag. This yields $(\gamma-\overline\gamma)\left( \Fil^j V \right) \subseteq \Fil^{j+2} V$ on each summand, which proves the claim for $V$ and establishes (\ref{cor:assoc-fil:N2}).
\end{proof}
\end{subequations}
Lemma~\ref{lem:centraliser-parabolic} and Corollary~\ref{cor:assoc-fil} will play an important role in the proof of the main theorem. See \S\ref{sec:closure} for more details.
\begin{rmk}
    When $\gamma\in G$ is unipotent (i.e., $(T-1)$-primary), our construction of the associated parabolic subgroup recovers the classical unitary case in \cite[\S{IV.2.22}]{SpringerSteinberg:ConjClasses}. In that reference, the cocharacter $\lambda$ \eqref{eq:lambda}---or equivalently, the splitting of the isotropic flag---is constructed canonically. McNinch \cite{McNinch:NilpOrbits} further generalised this approach to arbitrary connected reductive groups in sufficiently large characteristic, and used it to prove the absolute convergence of unipotent orbital integrals under the same restriction on the characteristic.

    Let $\gamma$ be $\wp$-primary for some \emph{inseparable} monic irreducible polynomial $\wp$. Since such $\gamma$ does not admit a Jordan decomposition over $F$, it is not possible to use Lemma~\ref{lem:Gs} to reduce the convergence of orbital integrals to the unipotent part of $\gamma$. Furthermore, it does not seem possible to \emph{canonically} construct a cocharacter $\lambda$ \eqref{eq:lambda} in this setting. Instead, we bypass this by using the primary decomposition of the $E[T]$-module $V$ to canonically construct the isotropic flag in Construction~\ref{constr:unitary-parabolic}, and hence the parabolic subgroup.
%
\end{rmk}

\section{Closure of a primary conjugacy class in the associated parabolic}\label{sec:closure}

In \S\ref{sec:parabolic}, we associated an $F$-rational parabolic subgroup $\Pbf$ to any \emph{primary} element $\gamma\in G=\Urm(V,h)$. The goal of this section is to construct a closed $F$-submanifold $\Vcal(\gamma)\subset P$ that contains $\Ocal_P(\gamma)$ as an open $F$-submanifold. If $\gamma$ is unipotent (i.e., $(T-1)$-primary), then $\Vcal(\gamma)$ is a (Zariski closed) subgroup of $N$ (see Remark~\ref{rmk:unip-orbit}). In general, $\Vcal(\gamma)$ is a fibration of a subgroup of $N$ over $\Ocal_M(\overline\gamma)$.

The following technical result gives the \emph{expected dimension} of $\Vcal(\gamma)$.
\begin{subequations}
    \addtocounter{equation}{-1}
\begin{prop}\label{prop:openness-levi-preimage} 
    Fix $\gamma\in G$, and let $\Pbf\subset \Gbf$ be a parabolic subgroup with Levi decomposition $\Pbf = \Mbf\ltimes\Nbf$. Suppose that $\Pbf$ contains $\Gbf_\gamma$.
    \begin{enumerate}
    \item\label{prop:openness-levi-preimage:centraliser} Let $\overline\gamma\in M$ denote the image of $\gamma$ under the Levi quotient. Then we have
    \begin{equation}\label{eq:openness-levi-preimage:dim}
    \dim(\Gbf_\gamma)\geqslant\dim(\Mbf_{\overline\gamma}).
    \end{equation}
    Furthermore, the codimension of $\Ocal_P(\gamma)$ in $\Ocal_M(\overline\gamma) \cdot N$ as an $F$-submanifold is equal to $\dim(\Gbf_\gamma) - \dim(\Mbf_{\overline\gamma})$.
    \item\label{prop:openness-levi-preimage:closure}
    Suppose that the dimension inequality \eqref{eq:openness-levi-preimage:dim} is an equality. Then, $\Ocal_P(\gamma)$ is contained in $\Ocal_M(\overline\gamma)\cdot N$ as an \emph{open} submanifold.
\end{enumerate}
\end{prop}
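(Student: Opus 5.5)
We reduce the statement to a dimension count for orbits of the algebraic group $\Pbf$ acting by conjugation on the smooth variety $\Mbf\cdot\Nbf$-preimage of the $\Mbf$-orbit of $\overline\gamma$. Consider $\Ocal_M(\overline\gamma)\cdot N\subset P$. By Lemma~\ref{lem:closed} and the smoothness of centralisers (the analogue of Proposition~\ref{prop:unitary-centralisers}(\ref{prop:unitary-centralisers:sm-conn}) for $\Mbf$, which is a product of general linear and unitary groups), $\Ocal_M(\overline\gamma)$ is a locally closed analytic submanifold of $M$ of dimension $\dim\Mbf-\dim\Mbf_{\overline\gamma}$. Hence $\Ocal_M(\overline\gamma)\cdot N\cong \Ocal_M(\overline\gamma)\times N$ is a submanifold of $P$ of dimension $\dim\Mbf-\dim\Mbf_{\overline\gamma}+\dim\Nbf=\dim\Pbf-\dim\Mbf_{\overline\gamma}$. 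It is stable under $P$-conjugation, since the Levi quotient $P\to M$ is $P$-equivariant and $N$ is normal. Therefore $\Ocal_P(\gamma)\subset\Ocal_M(\overline\gamma)\cdot N$.

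Next we compute $\dim\Ocal_P(\gamma)$. Since $\Gbf_\gamma\subset\Pbf$, we have $\Pbf_\gamma=\Gbf_\gamma$, which is smooth by Proposition~\ref{prop:unitary-centralisers}(\ref{prop:unitary-centralisers:sm-conn}). So the orbit map $c_\gamma\colon P_\gamma\backslash P\to P$ is an immersion of analytic manifolds, exactly as in the definition of $c_\gamma$ for $G$, using that $\Pbf_\gamma\backslash\Pbf\to\Pbf$ is an unramified monomorphism. Its image $\Ocal_P(\gamma)$ is thus an (injectively immersed, and locally closed by the scheme-theoretic argument of Proposition~\ref{prop:stable-conjugacy}) submanifold of dimension $\dim\Pbf-\dim\Gbf_\gamma$. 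The containment of manifolds then gives $\dim\Pbf-\dim\Gbf_\gamma\leqslant\dim\Pbf-\dim\Mbf_{\overline\gamma}$, which is \eqref{eq:openness-levi-preimage:dim}. The codimension is the difference $\dim\Gbf_\gamma-\dim\Mbf_{\overline\gamma}$.

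For the inequality one could alternatively argue group-theoretically: the image of $\Gbf_\gamma$ under $\Pbf\to\Mbf$ lies in $\Mbf_{\overline\gamma}$. However, that only gives the wrong direction, so the orbit-containment argument is the one to use.

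For (\ref{prop:openness-levi-preimage:closure}), if the dimensions agree, the immersion $P_\gamma\backslash P\to\Ocal_M(\overline\gamma)\cdot N$ is an injective analytic map between $F$-manifolds of equal dimension whose differential is injective, hence bijective, at every point. By the $F$-analytic inverse function theorem (Serre), it is a local homeomorphism, hence open. Its image $\Ocal_P(\gamma)$ is therefore open in $\Ocal_M(\overline\gamma)\cdot N$, and being an injective open map it is a homeomorphism onto that open subset.

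The main obstacle is the precise injectivity of the differential, i.e. that $c_\gamma$ is immersive at the level of $F$-points, equivalently that the tangent space of the scheme-theoretic centraliser equals $\Lie(\Gbf_\gamma)$. This is where smoothness of $\Gbf_\gamma$ from Proposition~\ref{prop:unitary-centralisers} is essential. The same smoothness is needed for $\Mbf_{\overline\gamma}$, so that $\Ocal_M(\overline\gamma)$ has the stated dimension. For the latter, I would invoke Proposition~\ref{prop:unitary-centralisers} factorwise on $\Mbf=\bU(V(0),h_0)\times\prod\bGL_E(V(j))$, together with the standard $\GL$ case from \eqref{eq:GL-centraliser}.
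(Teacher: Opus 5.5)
Your proposal is correct and follows essentially the same route as the paper. It uses the containment $\Ocal_P(\gamma)\subset\Ocal_M(\overline\gamma)\cdot N$, the smoothness of $\Gbf_\gamma=\Pbf_\gamma$ and of $\Mbf_{\overline\gamma}$ to make the orbit maps immersive and compare dimensions, and Serre's inverse function theorem in the equality case. The only difference is presentational: the paper writes the tangent map at $\gamma$ explicitly as $\gfr_\gamma\backslash\pfr\to\pfr\times_{\mfr}(\mfr_{\overline\gamma}\backslash\mfr)$, $a\mapsto\bigl((1-\Ad(\gamma^{-1}))(a),\overline a\bigr)$, a description it reuses later for $\Vcal(\gamma)$, whereas you only compare the dimensions of the two manifolds.
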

Later, we will specialise to the case when $\gamma$ is primary and $\Pbf$ is the parabolic subgroup as in Definition~\ref{def:assoc-fil}. See Corollary~\ref{cor:openness-levi-preimage} and the discussions that follow.
\begin{proof}
The natural projection $P\twoheadrightarrow M$ maps $\Ocal_P(\gamma)$ to $\Ocal_M(\overline \gamma)$, so $\Ocal_P(\gamma)$ is contained in the full preimage of $\Ocal_M(\overline\gamma)$, which is $\Ocal_M(\overline\gamma)\cdot N$. 

Next, we describe the tangent spaces of $\Ocal_P(\gamma)$ and $\Ocal_M(\overline\gamma)\cdot N$ at $\gamma$, and show that the natural inclusion induces an injective map on the tangent spaces. 
For this, it is convenient to replace $c_\gamma\colon P\to P$ by the following translate
    \begin{equation}
        \gamma^{-1}c_\gamma\colon\begin{tikzcd}[column sep = small]
        P \arrow[r]& P    
    \end{tikzcd},
    \end{equation}
    which sends $g\in P$ to $[\gamma^{-1},g^{-1}] = \gamma^{-1}g^{-1}\gamma g$. This map preserves the identity element, and induces the following map on the Lie algebra $\pfr$ of $P$:
    \begin{equation}
        1-\Ad(\gamma^{-1})\colon \begin{tikzcd}[column sep = small]
        \pfr \arrow[r] &\pfr
        \end{tikzcd},
        \end{equation}
        where $\Ad\colon P\to \GL(\pfr)$ is the adjoint representation.
        
        Assume now that $\Gbf_\gamma\subset \Pbf$ as in (\ref{prop:openness-levi-preimage:centraliser}), so that $G_\gamma$ is also the centraliser of $\gamma$ in $P$. Let $\gfr_\gamma$, $\mfr$, $\nfr$, and $\mfr_{\overline\gamma}$ denote the Lie algebras of $G_\gamma$, $M$, $N$, and $M_{\overline\gamma}$, respectively. 

        The map $\gamma^{-1}c_\gamma$ induces a locally closed embedding $G_\gamma\backslash P\to P$, which is the restriction of an immersion of affine algebraic varieties. By smoothness of $\Gbf_\gamma$, the tangent space at the identity is $\gfr_\gamma\backslash\pfr$, and $\gamma^{-1}c_\gamma$ induces an injective map on tangent spaces at the identity. Thus, 
        \begin{equation}
        \gfr_\gamma = \ker (1-\Ad(\gamma^{-1})\mid \pfr).
        \end{equation}

    Since $\Mbf$ is a direct product involving general linear groups and a unitary group,
    $\Mbf_{\overline\gamma}$ is smooth and the map $\overline\gamma^{-1}c_{\overline\gamma}\colon M_{\overline\gamma}\backslash M\to M$ satisfies the analogous properties to $\gamma^{-1}c_\gamma\colon P_\gamma\backslash P\to P$, including 
    \begin{equation}
        \mfr_{\overline\gamma} = \ker(1-\Ad(\overline\gamma^{-1})\mid \mfr).
        \end{equation}
        Furthermore, it identifies the tangent space of $\Ocal_M(\overline\gamma)\cdot N$ at $\gamma$ with
        \begin{equation}
        \pfr \times_{\mfr} (\mfr_{\overline\gamma}\backslash\mfr),
        \end{equation}
        where the fibre product is taken with respect to the natural projection $\pfr\twoheadrightarrow\mfr$ and the map $1-\Ad(\overline\gamma^{-1})\colon \mfr_{\overline\gamma}\backslash\mfr \to\mfr$. 

         With these identifications, the natural inclusion  $\Ocal_P(\gamma)\hookrightarrow\Ocal_M(\overline\gamma)\cdot N$ induces the following map on tangent spaces at $\gamma$:
        \begin{equation}\label{eq:embedding-tang-sp}
        \begin{tikzcd}
        \gfr_\gamma\backslash\pfr \arrow[r]  & 
        \pfr \times_{\mfr} (\mfr_{\overline\gamma}\backslash\mfr)
        \end{tikzcd},
        \end{equation}
        sending the coset of $a\in\pfr$ to $((1-\Ad(\gamma^{-1}))(a), \overline a)$, where $\overline a$ is the image of $a$ in $\mfr_{\overline\gamma}\backslash\mfr$. This map is injective since $\gfr_\gamma = \ker (1-\Ad(\gamma^{-1})\mid \pfr)$. Comparing $F$-dimensions, we obtain 
        \begin{multline*}
             \dim\left( \Ocal_P(\gamma) \right)= \dim_F(\pfr) - \dim_F(\gfr_\gamma) \\
             \leqslant \dim_F(\pfr) - \dim_F(\mfr_{\overline\gamma}) = \dim\left( \Ocal_M(\overline\gamma)\cdot N \right),
        \end{multline*}
        which gives the dimension inequality \eqref{eq:openness-levi-preimage:dim} and the codimension formula.

        Now suppose that $\dim_F(\gfr_\gamma) = \dim_F(\mfr_{\overline\gamma})$. Then the $P$-equivariant inclusion $\Ocal_P(\gamma)\hookrightarrow\Ocal_M(\overline\gamma)\cdot N$ induces an isomorphism on tangent spaces at $\gamma$, hence everywhere. By \cite[Part~II, Chap~III, §9, Thm~2]{Serre:LieThy}, the embedding is a local isomorphism of analytic $F$-manifolds, so it is an open embedding.
\end{proof}
\end{subequations}

If $\gamma$ is primary, we have constructed a parabolic subgroup $\Pbf$ containing $\Gbf_\gamma$ (see Lemma~\ref{lem:centraliser-parabolic}), and have already computed the dimensions of $\Gbf_\gamma$ and $\Mbf_{\overline\gamma}$ (see \eqref{eq:Laumon4.8.4:dim} and \eqref{eq:centraliser-dim}). Under a favourable assumption on $\gamma$ (including \emph{regular} elements), we have $\dim(\Gbf_\gamma) = \dim (\Mbf_{\overline\gamma})$ so (\ref{prop:openness-levi-preimage:closure}) can be applied. However, the dimension equality may be impossible to arrange for general primary elements $\gamma$. See Remark~\ref{rmk:openness-levi-preimage} for further discussions.
We record another useful property below.
\begin{subequations}
    \addtocounter{equation}{-1}
\begin{lem}\label{lem:Laumon4.8.5}
    If $\Pbf$ contains $\Gbf_\gamma$, then we get a natural long exact sequence
\begin{equation}\label{eq:Laumon4.8.5}
    \begin{tikzcd}[column sep=small, row sep=small]
    0 \arrow[r] 
    & \ker\big(1-\Ad(\gamma^{-1})\mid\nfr)\big) \arrow[r] 
    & \gfr_\gamma \arrow[r] 
    & \mfr_{\overline\gamma} \arrow[dll, rounded corners,
        to path={
            -- ([xshift=2.5ex]\tikztostart.east)
            |- ($(\tikztostart.south)!0.5!(\tikztotarget.north)$)
            -| ([xshift=-2.5ex]\tikztotarget.west)
            -- (\tikztotarget)
        }] \\
    & \coker\big(1-\Ad(\gamma^{-1})\mid\nfr)\big) \arrow[r] 
    & \nfr_{[\gamma]} \arrow[r] 
    & 0
\end{tikzcd}
\end{equation}
    where $\nfr_{[\gamma]}\coloneqq\im\left( \nfr\to \coker\big(1-\Ad(\gamma^{-1})\mid\pfr\big) \right)$.
    In particular, we have a natural isomorphism of $1$-dimensional $F$-vector spaces
    \begin{equation}\label{eq:Laumon4.8.5:Haar}
        \begin{tikzcd}
        \det(\gfr_\gamma) \arrow[r,"\cong"] & \det(\mfr_{\overline\gamma})\otimes\det(\nfr_{[\gamma]})
    \end{tikzcd}.
    \end{equation}
    Furthermore, we have $\nfr_{[\gamma]}=0$ if and only if $\dim(\Gbf_\gamma)=\dim(\Mbf_{\overline\gamma})$, in which case we have a natural isomorphism $\det{\gfr_\gamma}\cong\det(\mfr_{\overline\gamma})$.
\end{lem}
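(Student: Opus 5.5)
The plan is to apply the snake lemma to the endomorphism $\phi\coloneqq 1-\Ad(\gamma^{-1})$ acting on the short exact sequence of $F$-vector spaces
\[
0\to\nfr\to\pfr\to\mfr\to 0 .
\]
Since $\nfr$ is an ideal of $\pfr$ stable under $\Ad(\gamma^{-1})$, $\phi$ preserves $\nfr$. The map it induces on $\mfr=\pfr/\nfr$ is $1-\Ad(\overline\gamma^{-1})$, because the adjoint action of $\gamma$ on $\pfr/\nfr$ factors through the Levi quotient $P\to M$. So we get a morphism of short exact sequences, and the snake lemma gives the six-term exact sequence
\[
0\to\ker(\phi|_\nfr)\to\ker(\phi|_\pfr)\to\ker(\phi|_\mfr)\to\coker(\phi|_\nfr)\to\coker(\phi|_\pfr)\to\coker(\phi|_\mfr)\to0 .
\]

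We then identify the kernel terms using the proof of Proposition~\ref{prop:openness-levi-preimage}. There, under the hypothesis $\Gbf_\gamma\subset\Pbf$ and using smoothness of $\Gbf_\gamma$ (Proposition~\ref{prop:unitary-centralisers}), we obtained $\gfr_\gamma=\ker(\phi|_\pfr)$. Likewise, smoothness of the centraliser in the Levi $\Mbf$ (a product of general linear and unitary groups) gave $\mfr_{\overline\gamma}=\ker(1-\Ad(\overline\gamma^{-1})|_\mfr)$. To obtain the displayed sequence, we truncate after the fifth term. The image of $\coker(\phi|_\nfr)$ in $\coker(\phi|_\pfr)$ is by definition $\nfr_{[\gamma]}$, so the sequence ending in $\coker(\phi|_\nfr)\to\nfr_{[\gamma]}\to0$ is exact.

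Next we take determinants. Since $\nfr$ is finite-dimensional, $\dim\ker(\phi|_\nfr)=\dim\coker(\phi|_\nfr)$, and $\phi|_\nfr$ induces a canonical isomorphism $\det\ker(\phi|_\nfr)\cong\det\coker(\phi|_\nfr)$. The multiplicativity of $\det$ along the exact sequence \eqref{eq:Laumon4.8.5} gives
\[
\det(\gfr_\gamma)\otimes\det(\coker\phi|_\nfr)\cong\det(\ker\phi|_\nfr)\otimes\det(\mfr_{\overline\gamma})\otimes\det(\nfr_{[\gamma]}).
\]
Cancelling via the canonical isomorphism above yields \eqref{eq:Laumon4.8.5:Haar}.

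Finally, the alternating sum of dimensions gives $\dim\nfr_{[\gamma]}=\dim\gfr_\gamma-\dim\mfr_{\overline\gamma}$, using $\dim\ker=\dim\coker$ on $\nfr$. Here $\dim_F\gfr_\gamma=\dim\Gbf_\gamma$ and $\dim_F\mfr_{\overline\gamma}=\dim\Mbf_{\overline\gamma}$, again by smoothness. Hence $\nfr_{[\gamma]}=0$ if and only if the dimensions agree, and in that case the isomorphism reduces to $\det\gfr_\gamma\cong\det\mfr_{\overline\gamma}$.

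The only delicate point is naturality of $\det\ker\cong\det\coker$ for $\phi|_\nfr$. The canonical choice is the isomorphism coming from the four-term exact sequence $0\to\ker\to\nfr\xrightarrow{\phi}\nfr\to\coker\to0$, together with the identification $\det\nfr\otimes\det\nfr^{-1}\cong F$. This choice should be checked to be independent of auxiliary data, so that the resulting isomorphism is natural; this matters for transporting Haar measures later.
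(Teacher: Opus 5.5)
Your proposal is correct and is essentially the paper's proof. It uses the snake lemma for $1-\Ad(\gamma^{-1})$ on $0\to\nfr\to\pfr\to\mfr\to 0$, identifies the kernels with $\gfr_\gamma$ and $\mfr_{\overline\gamma}$, truncates at $\nfr_{[\gamma]}$, and derives the determinant isomorphism formally; your canonical $\det\ker\cong\det\coker$ for $1-\Ad(\gamma^{-1})$ on $\nfr$ just spells out what the paper leaves as ``formal''. The only cosmetic difference is that the paper gets the criterion for $\nfr_{[\gamma]}=0$ from the last three snake terms $0\to\nfr_{[\gamma]}\to\coker(1-\Ad(\gamma^{-1})\mid\pfr)\to\coker(1-\Ad(\overline\gamma^{-1})\mid\mfr)\to 0$ rather than from your alternating dimension sum, and the two counts are equivalent.
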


\begin{proof}
    (Compare with the proof of Lemma~(4.8.4) and Remark~(4.8.5) in \cite{Laumon:Book1}.) The long exact sequence \eqref{eq:Laumon4.8.5} is a truncation of the snake lemma long exact sequence associated to the following diagram:
    \begin{equation}
        \begin{tikzcd}[column sep = large]
            0 \arrow[r] & \nfr \arrow[r] \arrow[d,"1-\Ad(\gamma^{-1})"] & \pfr \arrow[r] \arrow[d,"1-\Ad(\gamma^{-1})"] & \mfr \arrow[r] \arrow[d,"1-\Ad(\overline\gamma^{-1})"] & 0 \\
            0 \arrow[r] & \nfr \arrow[r] & \pfr \arrow[r] & \mfr \arrow[r] & 0
        \end{tikzcd}
    \end{equation}
    Note that $\nfr_{[\gamma]}$ coincides with the image of $\coker\big(1-\Ad(\gamma^{-1})\mid\nfr\big)$ in $\coker\big(1-\Ad(\gamma^{-1})\mid\pfr\big)$. The isomorphism \eqref{eq:Laumon4.8.5:Haar} follows formally from \eqref{eq:Laumon4.8.5}. Finally, considering the last three terms of the snake lemma sequence, we obtain
    \[
        \begin{tikzcd}[column sep=small]
        0 \arrow[r] & \nfr_{[\gamma]} \arrow[r] &
        \coker\big(1-\Ad(\gamma^{-1})\mid\pfr\big) \arrow[r,"(*)"] &
        \coker\big(1-\Ad(\overline\gamma^{-1})\mid\mfr\big) \arrow[r] & 0
        \end{tikzcd}
    \]
    and observe that $(*)$ is an isomorphism if and only if $\dim(\gfr_\gamma) = \dim(\mfr_{\overline\gamma})$. This completes the proof.
\end{proof}
\end{subequations}

\begin{rmk}\label{rmk:Laumon4.8.7} (Compare with \cite[Remark~4.8.7]{Laumon:Book1}.)
    If $P$ contains $\Gbf_\gamma$ and $\dim(\Gbf_\gamma) = \dim(\Mbf_{\overline\gamma})$ (so $\nfr_{[\gamma]}=0$), then \eqref{eq:Laumon4.8.5:Haar} defines a canonical bijections between the sets of Haar measures on $G_\gamma$ and $M_{\overline\gamma}$ in the setting of Proposition~\ref{prop:openness-levi-preimage}.  In fact, a Haar measure on $G_\gamma$ is necessarily a positive real scalar multiple of the measure $\lVert\omega\rVert$ associated to a top-degree (left-)invariant differential form $\omega\in\det(\gfr_\gamma)^\vee$. We define the correspondence between Haar measures on $P_\gamma$ and $M_{\overline\gamma}$ by sending $\lVert\omega\rVert$ to $\lVert\overline\omega\rVert$, where $\overline\omega\in \det(\mfr_{\overline\gamma})^\vee$ is the image of $\omega$ under the isomorphism in Lemma~\ref{lem:Laumon4.8.5}. 

    In general, if we only have $\Pbf\supset\Gbf_\gamma$, then we also need to choose a basis for $\det(\nfr_{[\gamma]})$ to set up a correspondence between Haar measures of $G_\gamma$ and $M_{\overline\gamma}$.
\end{rmk}

From now on, let $\gamma \in G=\Urm(V,h)$ be a \emph{$\wp$-primary} element, and endow $V$ with an $E[T]$-module structure induced by $\gamma$. Suppose that we have an $E[T]$-module isomorphism 
\[
V\cong \bigoplus_{i=1}^s \left( E[T]/\wp^{m_i} \right)^{r_i}, \quad \text{as in \eqref{eq:Laumon4.8.4:primary}}.
\] 
In this setting, we have constructed a parabolic subgroup $\Pbf\subset \Gbf$ that contains $\Gbf_\gamma$ (see Definition~\ref{def:assoc-fil} and Lemma~\ref{lem:centraliser-parabolic}).
\begin{cor}\label{cor:openness-levi-preimage} 
    In the above setting, we have
    \[
        \dim(\Gbf_\gamma) - \dim (\Mbf_{\overline\gamma}) = 2 d_\wp\sum_{i=1}^s(m_i-m_{i+1}) \Big(\sum_{i'\leqslant i;\ i' \in I_-} r_{i'} \Big)\cdot \Big(\sum_{i'\leqslant i;\ i' \in I_+} r_{i'} \Big), 
    \]
    using the notation of Lemma~\ref{lem:assoc-fil}(\ref{lem:assoc-fil:gr-dim}).
    
    In particular, $\Ocal_P(\gamma)$ is open in $\Ocal_M(\overline\gamma)\cdot N$ if and only if $m_1,\cdots,m_s$ are either all even or all odd (i.e., either $I_-=\emptyset$ or $I_+=\emptyset$), in which case there is a natural bijection between the Haar measures on $G_\gamma$ and $M_{\overline\gamma}$. This condition is satisfied in particular when $\gamma$ is regular (i.e., when $\dim(\Gbf_\gamma)=\dim_E V$).
\end{cor}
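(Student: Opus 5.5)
The plan is to compute $\dim(\Gbf_\gamma)$ directly and subtract the formula \eqref{eq:centraliser-dim} of Corollary~\ref{cor:assoc-fil}(\ref{cor:assoc-fil:dim}). Since $\Gbf_\gamma$ is smooth (Proposition~\ref{prop:unitary-centralisers}), its dimension can be computed after base change to $E$. By \eqref{eq:unitary-BC-to-GL} we have $\Gbf_{\gamma,E}\cong \Hbf_\gamma$ with $\Hbf=\bGL_E(V)$. The $F$-dimension of $\Gbf_\gamma$ therefore equals the $E$-dimension of $\Hbf_\gamma$, which is given by \eqref{eq:Laumon4.8.4:dim}:
\[
\dim(\Gbf_\gamma)=d_\wp\sum_{i=1}^s(m_i-m_{i+1})\Big(\sum_{i'\leqslant i}r_{i'}\Big)^2 .
\]
Similarly, $\dim(\Mbf_{\overline\gamma})$ is an $E$-dimension computed on $\Mbf_E$, so both numbers are measured on the same scale. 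Writing $A_i\coloneqq\sum_{i'\leqslant i,\,i'\in I_-}r_{i'}$ and $B_i\coloneqq\sum_{i'\leqslant i,\,i'\in I_+}r_{i'}$, we have $\sum_{i'\leqslant i}r_{i'}=A_i+B_i$. Hence $(A_i+B_i)^2-A_i^2-B_i^2=2A_iB_i$, and subtracting \eqref{eq:centraliser-dim} termwise yields the displayed formula.

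One point deserves a check: \eqref{eq:centraliser-dim} implicitly uses that, for $m_{i+1}<|j|+1\leqslant m_i$, the dimension $\dim_{E_\wp}\gr^jV$ equals $A_i$ or $B_i$ according to the parity of $j$. This comes from Lemma~\ref{lem:assoc-fil}(\ref{lem:assoc-fil:gr-dim}). One also has to count that, for each $i$, exactly $m_i-m_{i+1}$ values of $j\in\ZZ$ fall in each parity class. The $j$'s with $m_{i+1}\leqslant|j|<m_i$ range over $\pm$ pairs (with $j=0$ counted once), and among them half are of each parity after pairing. I would verify this parity bookkeeping carefully, since it is the only delicate step. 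However, it is already encoded in Corollary~\ref{cor:assoc-fil}, so I may take it as given.

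For the ``in particular'' part, every term $(m_i-m_{i+1})A_iB_i$ is non-negative. The sum vanishes iff $A_iB_i=0$ for all $i$. Since $A_i,B_i$ are increasing in $i$, this holds iff $A_s=0$ or $B_s=0$, i.e.\ $I_-=\emptyset$ or $I_+=\emptyset$. If the difference vanishes, Proposition~\ref{prop:openness-levi-preimage}(\ref{prop:openness-levi-preimage:closure}) gives openness, and Remark~\ref{rmk:Laumon4.8.7} (via Lemma~\ref{lem:Laumon4.8.5}) gives the bijection of Haar measures. If the difference is positive, the codimension statement in Proposition~\ref{prop:openness-levi-preimage}(\ref{prop:openness-levi-preimage:centraliser}) shows $\Ocal_P(\gamma)$ has positive codimension, hence is not open. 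Finally, when $\gamma$ is regular we have $s=1$, so one of $I_\pm$ is empty.
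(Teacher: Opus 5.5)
Your proposal is correct and is essentially the paper's argument. You subtract \eqref{eq:centraliser-dim} from \eqref{eq:Laumon4.8.4:dim} using $(A_i+B_i)^2-A_i^2-B_i^2=2A_iB_i$, then invoke Proposition~\ref{prop:openness-levi-preimage}, Lemma~\ref{lem:Laumon4.8.5} and Remark~\ref{rmk:Laumon4.8.7}. Your use of the codimension formula in Proposition~\ref{prop:openness-levi-preimage}(\ref{prop:openness-levi-preimage:centraliser}) for the ``only if'' direction makes explicit a step the paper leaves implicit.

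One caution about your aside, although it does not affect the proof since you take \eqref{eq:centraliser-dim} as given. The per-block parity count you sketch is false. For $V\cong E[T]/\wp^3\oplus E[T]/\wp^2$, the block $2\leqslant|j|<3$ consists of $j=\pm2$, both even, not one of each parity. The formula holds only in aggregate: for $i,i'\in I_-$, the number of even $j$ with $|j|\leqslant\min(m_i,m_{i'})-1$ is exactly $\min(m_i,m_{i'})$, because that minimum is odd. The same holds for $I_+$ and odd $j$.
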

\begin{proof}
The first claim is immediate from the dimension formulae for $\Gbf_\gamma$ \eqref{eq:centraliser-dim} and $\Mbf_{\overline\gamma}$ \eqref{eq:Laumon4.8.4:dim}. Therefore, $\dim(\Gbf_\gamma)=\dim(\Mbf_{\overline{\gamma}})$ if and only if either $I_-$ or $I_+$ is empty, and this condition is satisfied when $\gamma$ is regular (since  $s = |I_-\sqcup I_+| = 1$).
The rest of the claims now follow from Proposition~\ref{prop:openness-levi-preimage}(\ref{prop:openness-levi-preimage:closure}), Lemma~\ref{lem:Laumon4.8.5}, and Remark~\ref{rmk:Laumon4.8.7}. 
\end{proof}

\begin{rmk}\label{rmk:openness-levi-preimage} 
    In the setting of Corollary~\ref{cor:openness-levi-preimage}, if both $I_-$ and $I_+$ are non-empty then we have $\dim(\Gbf_\gamma) - \dim(\Mbf_{\overline\gamma})>0$. In this case, it may \emph{not} be possible to find \emph{another} $F$-rational parabolic subgroup $\Pbf'\subset\Gbf$ containing $\Gbf_\gamma$ such that the image $\overline\gamma\in M'$ under the Levi quotient is \emph{closed} and we have $\dim(\Gbf_\gamma) = \dim(\Mbf_{\overline\gamma})$. For example, let $(V,h)$ be a rank-$3$ non-degenerate hermitian $E$-space. Then $\Gbf=\Ubf(V,h)$ is quasi-split with $F$-semisimple rank~$1$, so any proper parabolic subgroup is a Borel subgroup. Now, choose $\gamma\in G$ so that the corresponding $E[T]$-module $V$ is isomorphic to $E[T]/(T-1)^2\oplus E[T]/(T-1)$. (To construct such $\gamma$, choose a rank-$2$ split hermitian $E$-subspace $(V_0,h_0)$ of $(V,h)$, and let $\gamma$ be the image of a regular unipotent element $\gamma_0\in \Urm(V_0,h_0)$.) Then, we have $\dim(\Gbf_\gamma)= 5$ by \eqref{eq:centraliser-dim}. For $\overline\gamma\in M$ to be closed, $\Pbf$ should be a proper parabolic, which forces $\Pbf$ to be a Borel subgroup and $\overline\gamma = 1$. Thus, $\Mbf_{\overline\gamma}$ is a maximal torus, which has dimension~$3$, so the dimension equality $\dim(\Gbf_\gamma) = \dim(\Mbf_{\overline\gamma})$ cannot be accomplished.
\end{rmk}
\begin{rmk}
    In the setting of Corollary~\ref{cor:openness-levi-preimage}, suppose furthermore that either $I_-$ or $I_+$ is empty so that $\Ocal_P(\gamma)$ is open in $\Ocal_M(\overline\gamma)\cdot N$. Even in this case, $\Ocal_P(\gamma)$ may \emph{not} be dense in $\Ocal_M(\overline\gamma)\cdot N$, unlike the situation for $\GL_E(V)$ in Lemma~\ref{lem:Laumon4.8.4}. We provide an example where $\gamma$ is unipotent, in which case $\Ocal_M(\overline\gamma)\cdot N=N$. The Zariski closure of $\Ocal_P(\gamma)$ is contained in $N$ by the Zariski closedness in $P$, and may be strictly larger than the analytic closure.

    To give a concrete example, let $(V,h)$ be a rank-$4$ split hermitian $E$-space and let $\gamma\in G=\Urm(V,h)$ be a regular unipotent element in some Siegel Levi subgroup. Then $V\cong \left( E[T]/(T-1)^2 \right)^2$, and $\Pbf$ is another Siegel parabolic subgroup associated to $\ker(\gamma-1\mid V)$, so $\Hrm^1(F,\Pbf)$ is trivial. On the other hand, $|\Hrm^1(F,\Gbf_\gamma)|=2$ by Proposition~\ref{prop:unitary-centralisers}(\ref{prop:unitary-centralisers:unip-rad}) and Corollary~\ref{cor:elem-div-self-dual}(\ref{cor:elem-div-self-dual:centraliser-ps}). (In fact, $\Rscr_{u,F}\Gbf_\gamma$ is cohomologically trivial, and $\Gbf_\gamma^{\ps}$ is a rank-$2$ unitary group.) By repeating the proof of Proposition~\ref{prop:stable-conjugacy}, there exist two distinct conjugacy classes $\Ocal_P(\gamma)$ and $\Ocal_P(\gamma')$ of the same dimension, both contained in $N$ as open submanifolds.
\end{rmk}

\begin{subequations}
    \addtocounter{equation}{-1}
Now we construct a closed $F$-submanifold $\Vcal(\gamma)\subset P$ containing $\Ocal_P(\gamma)$ as an open $F$-submanifold. Recall that the embedding $\Ocal_P(\gamma)\hookrightarrow\Ocal_M(\overline\gamma)\cdot N$ induces a map on tangent spaces whose cokernel is $\nfr_{[\gamma]}$, using the notation from Lemma~\ref{lem:Laumon4.8.5}.

We also fix a Levi decomposition $\Pbf = \Mbf\ltimes\Nbf$, which defines the grading on $\gfr$ as in \eqref{eq:wt-Lie}, and the ideal $\nfr_2\subset \nfr$ as in Lemma~\ref{lem:nfr-wt-decomp}.

\begin{prop}\label{prop:tangent-sp-cokernel}
The natural projection $\nfr \twoheadrightarrow \nfr/\nfr_2 \cong \gfr(1)$ induces an isomorphism of $F$-vector spaces:
\begin{equation}\label{eq:tangent-sp-cokernel}
    \begin{tikzcd}
\nfr_{[\gamma]} \arrow[r, "\sim"] &
\coker\left( 1-\Ad(\overline\gamma^{-1}) \mid \gfr(1) \right).
\end{tikzcd}
\end{equation}
\end{prop}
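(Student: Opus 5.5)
The plan is to build a well-defined surjection from $\nfr_{[\gamma]}$ onto the cokernel in \eqref{eq:tangent-sp-cokernel}, and then prove it is an isomorphism by comparing dimensions. Recall that $\nfr_{[\gamma]} = \nfr/\big(\nfr\cap (1-\Ad(\gamma^{-1}))(\pfr)\big)$. Consider the composite
\[
\nfr \twoheadrightarrow \nfr/\nfr_2\cong\gfr(1)\twoheadrightarrow \coker\big(1-\Ad(\overline\gamma^{-1})\mid\gfr(1)\big).
\]
It is clearly surjective. The point is to show it kills $\nfr\cap(1-\Ad(\gamma^{-1}))(\pfr)$.

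The key input is Corollary~\ref{cor:assoc-fil}(\ref{cor:assoc-fil:N2}): we can write $\gamma=\overline\gamma n$ with $n\in N_2$. I will first check that $\Ad(n)$ acts trivially on each quotient $\nfr_j/\nfr_{j+2}$, and also on $\pfr/\nfr_2$. This can be done after base change to $E$, using the description \eqref{eq:N2}: since $(n-1)$ raises the filtration by $2$, so does $\Ad(n)-1$. Consequently $\Ad(\gamma^{-1})\equiv\Ad(\overline\gamma^{-1})$ modulo terms of weight raised by at least $2$.

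Now suppose $x=(1-\Ad(\gamma^{-1}))y\in\nfr$ with $y=\sum_{j\ge0}y_j$ and $y_j\in\gfr(j)$. Projecting to $\mfr=\gfr(0)$ gives $(1-\Ad(\overline\gamma^{-1}))y_0=0$. Taking the weight-$1$ component of $x$ and using the congruence above gives
\[
x_1=(1-\Ad(\overline\gamma^{-1}))y_1-\big(\Ad(\gamma^{-1})y_0-\Ad(\overline\gamma^{-1})y_0\big)_1 .
\]
The bracketed term lies in $\nfr_2$, so its weight-$1$ part vanishes. Hence $x_1\in\im(1-\Ad(\overline\gamma^{-1})\mid\gfr(1))$, and the map descends to a surjection $\nfr_{[\gamma]}\twoheadrightarrow\coker(1-\Ad(\overline\gamma^{-1})\mid\gfr(1))$.

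For injectivity I will compare dimensions. By the exact sequence of Lemma~\ref{lem:Laumon4.8.5},
\[
\dim_F\nfr_{[\gamma]}=\dim\Gbf_\gamma-\dim\Mbf_{\overline\gamma}.
\]
Corollary~\ref{cor:openness-levi-preimage} evaluates this as $2d_\wp\sum_i(m_i-m_{i+1})A_iB_i$. Here $A_i=\sum_{i'\le i,\ i'\in I_-}r_{i'}$ and $B_i=\sum_{i'\le i,\ i'\in I_+}r_{i'}$.

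On the other side, $\dim_F\coker=\dim_F\ker(1-\Ad(\overline\gamma^{-1})\mid\gfr(1))$, and this can be computed after base change to $E$. By Lemma~\ref{lem:nfr-wt-decomp}(\ref{lem:nfr-wt-decomp:odd-wt}) this is the $E$-dimension of the $\overline\gamma$-invariants in $\bigoplus_{k\in\ZZ}\Hom_E(\gr^kV,\gr^{k+1}V)$. Since $\overline\gamma$ acts on every $\gr^kV$ with minimal polynomial $\wp$ (proof of Corollary~\ref{cor:assoc-fil}), these invariants are $\bigoplus_k\Hom_{E_\wp}(\gr^kV,\gr^{k+1}V)$. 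Their $E$-dimension is $d_\wp\sum_{k\in\ZZ}a_ka_{k+1}$, where $a_k=\dim_{E_\wp}\gr^kV$.

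Inserting the formula of Lemma~\ref{lem:assoc-fil}(\ref{lem:assoc-fil:gr-dim}) for $a_k$ and using the symmetry $a_k=a_{-k}$, I expect the sum to collapse to $2\sum_i(m_i-m_{i+1})A_iB_i$. The reason is that, for each $k$, the product $a_ka_{k+1}$ pairs an $I_-$-sum with an $I_+$-sum, cut off at $m_i\ge|k|+1$ and $m_i\ge|k+1|+1$ respectively. The two dimensions then agree, and the surjection is an isomorphism.

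The main obstacle is this bookkeeping: matching $\sum_k a_ka_{k+1}$ with the closed formula, in particular handling the asymmetric cut-offs at $|k|$ versus $|k+1|$ near $k=0,-1$. If that index chase becomes messy, I would instead verify it summand by summand. I would check the identity on $(E[T]/\wp^{m})^{r}\oplus(E[T]/\wp^{m'})^{r'}$ for $m$ and $m'$ of opposite parity, and then use bilinearity of both sides in the multiplicities.
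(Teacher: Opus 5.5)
Your proposal is correct and is essentially the paper's proof. The surjection $\nfr_{[\gamma]}\twoheadrightarrow\coker\bigl(1-\Ad(\overline\gamma^{-1})\mid\gfr(1)\bigr)$ comes from Corollary~\ref{cor:assoc-fil}(\ref{cor:assoc-fil:N2}), and your explicit treatment of the $\mfr$-component $y_0$ is more careful than the paper's one-line factorisation; injectivity comes from matching $\dim\Gbf_\gamma-\dim\Mbf_{\overline\gamma}$ with $\dim_F\ker\bigl(1-\Ad(\overline\gamma^{-1})\mid\gfr(1)\bigr)$ via $E_\wp$-linear maps, and your polarisation check does close the bookkeeping, since each pair $i\in I_-$, $i'\in I_+$ contributes $2r_ir_{i'}\min(m_i,m_{i'})$ to both $\sum_{k\in\ZZ}a_ka_{k+1}$ and $2\sum_i(m_i-m_{i+1})A_iB_i$; one caveat, which the paper shares, is that Corollary~\ref{cor:assoc-fil}(\ref{cor:assoc-fil:N2}) holds only for a suitable splitting, not an arbitrary one. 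For instance, if $V\cong E[T]/\wp^2\oplus E[T]/\wp$ with $d_\wp\geqslant 2$, any $F$-rational splitting whose $V(0)\subset\ker\wp$ is not $\gamma$-stable has $\overline\gamma^{-1}\gamma\notin N_2$, so you should first move the Levi by an element of $N$ to arrange $\overline\gamma^{-1}\gamma\in N_2$ (possible because a splitting adapted to a cyclic decomposition achieves this over $E$, so the $\gfr(1)$-component of $\overline\gamma^{-1}\gamma$ lies in $(1-\Ad(\overline\gamma^{-1}))(\gfr(1))$), and this is harmless because \eqref{eq:tangent-sp-cokernel} is intrinsically the map $\nfr_{[\gamma]}\to\coker\bigl(1-\Ad(\gamma^{-1})\mid\nfr/\nfr_2\bigr)$.
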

\begin{proof}
    Recall from Corollary~\ref{cor:assoc-fil}(\ref{cor:assoc-fil:N2}) that the image of $\gamma$ on $P/N_2 \cong M \ltimes \gfr(1)$ is equal to $\overline\gamma$. Therefore, the action of $1-\Ad(\gamma^{-1})$ on $\nfr/\nfr_2$ is canonically identified with $1-\Ad(\overline\gamma^{-1})$ on $\gfr(1)$. Thus the composition of natural surjections
    \begin{equation}
        \begin{tikzcd}
            \nfr \arrow[r, two heads] &
            \gfr(1) \arrow[r, two heads] &
            \coker\left( 1-\Ad(\overline\gamma^{-1}) \mid \gfr(1) \right)
        \end{tikzcd}
    \end{equation}
    factors through $\nfr_{[\gamma]}$, which shows the surjectivity of \eqref{eq:tangent-sp-cokernel}.
    
    It remains to show that the source and the target of \eqref{eq:tangent-sp-cokernel} have the same dimension. On the one hand, Lemma~\ref{lem:Laumon4.8.5} yields $\dim\nfr_{[\gamma]} = \dim(\Gbf_\gamma) - \dim (\Mbf_{\overline\gamma})$, which was computed in Corollary~\ref{cor:openness-levi-preimage}. To compute the dimension of the target, recall from Lemma~\ref{lem:nfr-wt-decomp}(\ref{lem:nfr-wt-decomp:odd-wt}) that we have  
    \[\gfr(1)\cong \bigoplus_{k >0} \Hom_E\left(V(k), V(k + 1)\right).\]
    Now, we view each $V(j)$ as an $E_\wp$-vector space via the isomorphism $V(j)\cong \gr^j(V)$; or equivalently, we note that the $E$-subalgebra of $\End_E\big(V(j)\big)$ generated by $\overline\gamma$ is $E_\wp$. This shows that
    \begin{equation}
        \ker\big(1-\Ad(\overline\gamma)^{-1}\mid \gfr(1)\big) \cong \bigoplus_{j\geqslant0}\Hom_{E_\wp}\left( V(j),V(j+1) \right),
    \end{equation} 
    since $E$-linear homomorphisms commuting with $\overline\gamma$ are precisely $E_\wp$-linear homomorphisms. Thus, we have
    \begin{multline}
        \dim_F\left( \coker\big(1-\Ad(\overline\gamma)^{-1}\mid \gfr(1)\big) \right) \\
        \begin{aligned}[b]
            &= \dim_F\left( \ker\big(1-\Ad(\overline\gamma)^{-1}\mid \gfr(1)\big) \right)\\
            &= [E_\wp:F]\cdot \sum_{j\geqslant0} \dim_{E_\wp}\big(V(j)\big)\cdot \dim_{E_\wp}\big(V(j+1)\big) \\
            &= 2 d_\wp\sum_{i=1}^s(m_i-m_{i+1}) \Big(\sum_{i'\leqslant i;\ i' \in I_-} r_{i'} \Big)\cdot \Big(\sum_{i'\leqslant i;\ i' \in I_+} r_{i'} \Big), 
        \end{aligned}
    \end{multline}
    which is equal to $\dim_F\nfr_{[\gamma]}$ by Lemma~\ref{lem:Laumon4.8.5} and Corollary~\ref{cor:openness-levi-preimage}.
\end{proof}
\end{subequations}

\begin{subequations}
    \addtocounter{equation}{-1}
    \begin{constr}
    To construct a closed $F$-submanifold $\Vcal(\gamma)\subset P$ containing $\Ocal_P(\gamma)$, let us first construct the ``approximation'' of $\Vcal(\gamma)$ modulo $N_2$, as follows.
    
    Let $\left( \Ocal_M(\overline\gamma)\cdot N \right)/N_2$ be the quotient with respect to the right translation by $N_2$. Then, we have an isomorphism of $F$-analytic manifolds
    \begin{equation}\label{eq:ambient-vb}
        \begin{tikzcd}
            \left(\Ocal_M(\overline\gamma)\cdot N \right)/N_2
            \arrow[r,"\cong"] &
            M_{\overline\gamma}\backslash \left( M\ltimes\gfr(1) \right)
        \end{tikzcd}
    \end{equation}
    sending $(m^{-1}\overline\gamma m)\cdot n$ to $M_{\overline\gamma}\cdot (m\cdot\xi)$ for any $m\in M$ and $n\in N$, where $\xi\in\gfr(1)$ is the image of $n$ under $N\twoheadrightarrow N/N_2\cong \gfr(1)$. On the source of the map, $M_{\overline\gamma}$ acts on $M$ by left translation and trivially on $\gfr(1)$. We endow $M_{\overline\gamma}\backslash \left( M\ltimes\gfr(1) \right)$ with a right $M$-action by letting $m\in M$ act via the right translation on $M_{\overline\gamma}\backslash M$ and by $\Ad(m^{-1})$ on $\gfr(1)$. With this action, the composition of the natural maps
    \begin{equation}\label{eq:incl-mod-N2}
        \begin{tikzcd}
            \Ocal_P(\gamma) \arrow[r, hook] &
            \Ocal_M(\overline\gamma)\cdot N \arrow[r, two heads] &
            \left(\Ocal_M(\overline\gamma)\cdot N \right)/N_2 \cong M_{\overline\gamma}\backslash \left( M\ltimes\gfr(1) \right)
        \end{tikzcd}
    \end{equation}
    is equivariant for the natural right $M$-actions.

    The natural projection $M_{\overline\gamma}\backslash \left( M\ltimes\gfr(1) \right)\to M_{\overline\gamma}\backslash M$ makes its source a trivial $M$-equivariant vector bundle over $M_{\overline\gamma}\backslash M$ with fibre $\gfr(1)$. 
    We now construct an $M$-equivariant subbundle $\overline\Vcal(\gamma) \subseteq M_{\overline\gamma}\backslash \left( M\ltimes\gfr(1) \right)$ as follows. Firstly, we consider 
    \begin{equation}
        \overline\Vcal_0\coloneqq  \big(1- \Ad(\overline\gamma^{-1}) \big)\big( \gfr(1) \big) = \ker\big(  \gfr(1) = \nfr/\nfr_2 \twoheadrightarrow \nfr_{[\gamma]} \big),
    \end{equation}
    which is an $F$-subspace of $\gfr(1)$ stable under the adjoint action of $M_{\overline\gamma}$.
    Now, define 
    \begin{equation}\label{eq:V-bar-gamma}
         \overline\Vcal(\gamma) \coloneqq M_{\overline\gamma}\backslash\big\{ (m,\xi)\in M\ltimes \gfr(1) \mid \Ad(m)(\xi)\in \overline\Vcal_0 \big\} \twoheadrightarrow M_{\overline\gamma}\backslash M,
    \end{equation}
    which is an $M$-equivariant subbundle of $M_{\overline\gamma}\backslash \left( M\ltimes\gfr(1) \right)$ such that the fibre at $M_{\overline\gamma}m \in M_{\overline\gamma}\backslash M$ is $\Ad(m^{-1})(\overline\Vcal_0)$. 
    \end{constr}
\end{subequations}

\begin{lem}\label{lem:V-bar-gamma}
The subbundle $\overline\Vcal(\gamma)$ is the image of $\Ocal_P(\gamma)$ under the map \eqref{eq:incl-mod-N2}.
\end{lem}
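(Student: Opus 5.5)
The plan is a direct computation in the quotient $P/N_2\cong M\ltimes\gfr(1)$, using the $M$-equivariance of the map \eqref{eq:incl-mod-N2} to reduce to conjugation by $N$. Write $\pi$ for the composite \eqref{eq:incl-mod-N2}. First I locate $\pi(\gamma)$. By Corollary~\ref{cor:assoc-fil}(\ref{cor:assoc-fil:N2}) we have $\gamma\in\overline\gamma\cdot N_2$. Hence, under \eqref{eq:ambient-vb} with $m=1$, $\pi(\gamma)$ is the point $(M_{\overline\gamma}\cdot 1,\,0)$, which lies in $\overline\Vcal(\gamma)$ since $0\in\overline\Vcal_0$.

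Next I compute the image of $\Ocal_N(\gamma)$. Take $n\in N$ with image $\eta\in\gfr(1)$ under $N\twoheadrightarrow N/N_2\cong\gfr(1)$ (Lemma~\ref{lem:nfr-wt-decomp}(\ref{lem:nfr-wt-decomp:exp})). In $P/N_2\cong M\ltimes\gfr(1)$ the class of $n^{-1}\gamma n$ equals $(1,-\eta)\cdot(\overline\gamma,0)\cdot(1,\eta)$. Computing in the semidirect product, this is $(\overline\gamma,\ \eta-\Ad(\overline\gamma^{-1})\eta)$ up to the convention for which side the vector component is written on. Under \eqref{eq:ambient-vb} this is the point
\[
\bigl(M_{\overline\gamma}\cdot1,\ (1-\Ad(\overline\gamma^{-1}))\eta\bigr).
\]
As $\eta$ ranges over $\gfr(1)$, using that $N\to\gfr(1)$ is surjective, these points fill out exactly the fibre $\{M_{\overline\gamma}\}\times\overline\Vcal_0$ over the base point. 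I expect this step to be the main obstacle, although only at the level of bookkeeping. One must fix the precise convention in \eqref{eq:nfr-wt-decomp:mod-N2-over-E} and \eqref{eq:ambient-vb}, namely whether $\xi$ multiplies on the left or the right and whether the correct operator is $\Ad(\overline\gamma)$ or $\Ad(\overline\gamma^{-1})$. Either way the image subspace is the same, since
\[
(1-\Ad(\overline\gamma))\gfr(1)=\Ad(\overline\gamma)\bigl(\Ad(\overline\gamma^{-1})-1\bigr)\gfr(1)=\overline\Vcal_0,
\]
because $\overline\Vcal_0$ is $\Ad(\overline\gamma)$-stable. The computation can be checked over $E$ in $\GL_E(V)$ with block matrices.

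Finally I use $M$-equivariance. Every element of $P$ can be written $p=nm$ with $n\in N$ and $m\in M$, so every element of $\Ocal_P(\gamma)$ has the form $m^{-1}(n^{-1}\gamma n)m$. Since $\pi$ is $M$-equivariant for the right actions described after \eqref{eq:ambient-vb}, we get
\[
\pi\bigl(m^{-1}(n^{-1}\gamma n)m\bigr)=\bigl(M_{\overline\gamma}m,\ \Ad(m^{-1})\xi\bigr),\qquad \xi=(1-\Ad(\overline\gamma^{-1}))\eta\in\overline\Vcal_0.
\]
This is exactly the fibre of $\overline\Vcal(\gamma)$ over $M_{\overline\gamma}m$ as described in \eqref{eq:V-bar-gamma}, which proves both inclusions at once. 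It remains to check well-definedness, namely that the fibre $\Ad(m^{-1})\overline\Vcal_0$ depends only on the coset $M_{\overline\gamma}m$. This holds because $\overline\Vcal_0$ is $\Ad(M_{\overline\gamma})$-stable: $\Ad(x)$ commutes with $\Ad(\overline\gamma^{-1})$ for every $x\in M_{\overline\gamma}$.
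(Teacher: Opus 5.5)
Your proposal is correct and follows essentially the same route as the paper: use Corollary~\ref{cor:assoc-fil}(\ref{cor:assoc-fil:N2}) to replace $\gamma$ by $\overline\gamma$ in $P/N_2\cong M\ltimes\gfr(1)$, compute that $N$-conjugates of $\overline\gamma$ become $\overline\gamma\cdot(1-\Ad(\overline\gamma^{-1}))\eta$ modulo $N_2$, and then spread this fibre over $M_{\overline\gamma}\backslash M$ by $M$-conjugation. The paper does the middle computation over $E$ with the matrix identity for $(1-\xi)\overline\gamma(1+\xi)$, discarding a term in $\nfr_{2,E}$, rather than with the semidirect-product law; this is the same calculation.
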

\begin{proof}
    By Corollary~\ref{cor:assoc-fil}(\ref{cor:assoc-fil:N2}), the image of $\gamma$ and $\overline\gamma$ in $P/N_2$ coincides, so let us first describe $\Ocal_{P/N_2}(\overline\gamma)$.

    Recall that the $\Gbf(E)$-equivariant isomorphism
    \[
    \begin{tikzcd}[column sep=small]
        \Nbf(E)/\Nbf_2(E)\arrow[rr,"\cong"] & & \gfr(1)_E& \bar{n} ;\arrow[r, maps to] & \xi\coloneqq \bar{n}-1 &\text{(see \eqref{eq:nfr-wt-decomp:exp-gfr-1})}
    \end{tikzcd}\]
    restricts to $N/N_2\riso \gfr(1)$; 
    see Lemma~\ref{lem:nfr-wt-decomp}(\ref{lem:nfr-wt-decomp:exp}) and its proof. Thus, for any $\xi\in \gfr(1)$, we have
    \[
    (1-\xi)\overline\gamma(1+\xi) = \overline\gamma\cdot \left( 1+\xi -\Ad(\overline\gamma^{-1})(\xi) -\Ad(\overline\gamma^{-1})(\xi)\cdot\xi \right) \in \Pbf(E).
    \]
    Now, since $\Ad(\overline\gamma^{-1})(\xi)\cdot\xi\in\nfr_{2,E}$, the image of $(1+\xi)^{-1}\overline\gamma(1+\xi)$ in $\Pbf(E)/\Nbf_2(E)$ is 
    \[
    \overline\gamma\cdot \left( 1+\xi -\Ad(\overline\gamma^{-1})(\xi)  \right) \in P/N_2.
    \]
    Thus, the isomorphism $P/N_2\riso M\ltimes\gfr(1)$ restricts to
    \[
    \begin{tikzcd}
        \{\bar{n}^{-1}\overline\gamma\bar{n}\mid \bar{n}\in N/N_2\} \arrow[r,"\cong"] & \overline\gamma\cdot \overline\Vcal_0.
    \end{tikzcd}
    \]
    By conjugating the above expression by $M$ and comparing it with the definition of $\overline\Vcal(\gamma)$ in \eqref{eq:V-bar-gamma}, the lemma follows.
\end{proof}

\begin{subequations}
    \addtocounter{equation}{-1}
\begin{cor}\label{cor:V-gamma}
    Let $\Vcal(\gamma)$ denote the full preimage of $\overline\Vcal(\gamma)$ under the composition
    \begin{equation}
        \begin{tikzcd}
        \Ocal_M(\overline\gamma)\cdot N \arrow[r, two heads] & \left(\Ocal_M(\overline\gamma)\cdot N \right)/N_2 \arrow[r,"\cong"] & M_{\overline\gamma}\backslash \left( M\ltimes\gfr(1) \right).
    \end{tikzcd}
    \end{equation}
    Then, $\Vcal(\gamma)\subset \Ocal_M(\overline\gamma)\cdot N$ is a closed $F$-submanifold stable under the conjugation by $P$, and it contains $\Ocal_P(\gamma)$ as an open $F$-submanifold.
\end{cor}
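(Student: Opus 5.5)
Three properties of $\Vcal(\gamma)$ need checking: it is a closed $F$-submanifold of $\Ocal_M(\overline\gamma)\cdot N$, it is stable under $P$-conjugation, and it contains $\Ocal_P(\gamma)$ as an open submanifold. All three come from the fibration structure over $M_{\overline\gamma}\backslash(M\ltimes\gfr(1))$ together with Lemma~\ref{lem:V-bar-gamma} and the dimension count of Proposition~\ref{prop:tangent-sp-cokernel}.

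\emph{Submanifold and stability.} The quotient map $\pi\colon\Ocal_M(\overline\gamma)\cdot N\to(\Ocal_M(\overline\gamma)\cdot N)/N_2$ is the quotient by the free right action of the closed unipotent subgroup $N_2$. Since $N_2$ is normal in $P$, this is a locally trivial analytic fibration; locally one can use the splitting $N\cong \gfr(1)\times N_2$ given by a section of $N\to N/N_2$, coming from the matrices in the proof of Lemma~\ref{lem:nfr-wt-decomp}. By construction \eqref{eq:V-bar-gamma}, $\overline\Vcal(\gamma)$ is the image of the closed subset $\{(m,\xi)\mid \Ad(m)(\xi)\in\overline\Vcal_0\}$ of $M\times\gfr(1)$. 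That subset is a trivial vector subbundle of $M\times \gfr(1)$ over $M$: apply the analytic automorphism $(m,\xi)\mapsto(m,\Ad(m)\xi)$. It is stable under left $M_{\overline\gamma}$-translation because $\overline\Vcal_0$ is $\Ad(M_{\overline\gamma})$-stable. Hence $\overline\Vcal(\gamma)$ is a closed analytic subbundle of the bundle $M_{\overline\gamma}\backslash(M\ltimes\gfr(1))\to M_{\overline\gamma}\backslash M$, and $\Vcal(\gamma)=\pi^{-1}(\overline\Vcal(\gamma))$ is a closed $F$-submanifold of $\Ocal_M(\overline\gamma)\cdot N$. For $P$-stability: $\Ocal_M(\overline\gamma)\cdot N$ is $P$-stable, and $\pi$ is equivariant for conjugation. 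Conjugation by $N_2$ acts trivially on the quotient modulo $N_2$, conjugation by $M$ corresponds to the right $M$-action, and conjugation by $N/N_2\cong\gfr(1)$ acts through the $P/N_2\cong M\ltimes\gfr(1)$ structure. So it suffices to check that $\overline\Vcal(\gamma)$ is stable under conjugation in $P/N_2$. This follows from the computation in the proof of Lemma~\ref{lem:V-bar-gamma}: the set of conjugates $\bar n^{-1}\,\overline\gamma\,\bar n$ equals $\overline\gamma\cdot\overline\Vcal_0$, and conjugating further by $M$ gives exactly $\overline\Vcal(\gamma)$, which is therefore the $P/N_2$-orbit of $\overline\gamma$.

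\emph{Containment and openness.} Lemma~\ref{lem:V-bar-gamma} gives $\pi(\Ocal_P(\gamma))=\overline\Vcal(\gamma)$, hence $\Ocal_P(\gamma)\subseteq\Vcal(\gamma)$. For openness, I follow the proof of Proposition~\ref{prop:openness-levi-preimage}(\ref{prop:openness-levi-preimage:closure}): by $P$-equivariance, it suffices to show that the inclusion induces an isomorphism on tangent spaces at $\gamma$, and then to apply Serre's inverse function theorem. The codimension of $\Vcal(\gamma)$ in $\Ocal_M(\overline\gamma)\cdot N$ equals the codimension of $\overline\Vcal_0$ in $\gfr(1)$, which is $\dim\coker(1-\Ad(\overline\gamma^{-1})\mid\gfr(1))$. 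By Proposition~\ref{prop:tangent-sp-cokernel}, this equals $\dim\nfr_{[\gamma]}$, which by Proposition~\ref{prop:openness-levi-preimage}(\ref{prop:openness-levi-preimage:centraliser}) and Lemma~\ref{lem:Laumon4.8.5} is the codimension of $\Ocal_P(\gamma)$. Hence $\dim\Ocal_P(\gamma)=\dim\Vcal(\gamma)$. Since $\Ocal_P(\gamma)\to\Vcal(\gamma)$ is an injective immersion (it is a locally closed embedding into $P$ and factors through $\Vcal(\gamma)$), the tangent map at $\gamma$ is injective between spaces of equal dimension, hence an isomorphism. Therefore $\Ocal_P(\gamma)$ is open in $\Vcal(\gamma)$.

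\emph{Main obstacle.} The delicate step is the first one: making precise that $\pi$ is a locally trivial analytic fibration compatible with the identification \eqref{eq:ambient-vb}, and that $\overline\Vcal(\gamma)$ is a closed submanifold of the quotient by $M_{\overline\gamma}$. I plan to handle this by working upstairs on $M\times N$ with the closed condition $\Ad(m)(\xi(n))\in\overline\Vcal_0$, and then descending along the principal $M_{\overline\gamma}$-bundle $M\to M_{\overline\gamma}\backslash M$ and the $N_2$-bundle.
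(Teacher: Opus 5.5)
Your proposal is correct and takes essentially the same route as the paper. Closedness comes from being the full preimage of the subbundle $\overline\Vcal(\gamma)$, and $P$-stability is checked modulo $N_2$ via Lemma~\ref{lem:V-bar-gamma}; for openness, the paper writes the tangent space of $\Vcal(\gamma)$ as $(\mfr+\Vcal_0)\times_{\mfr}(\mfr_{\overline\gamma}\backslash\mfr)$ with $\Vcal_0=\ker(\nfr\twoheadrightarrow\nfr_{[\gamma]})$, whereas you count the codimension fibrewise through $\overline\Vcal_0$ and Proposition~\ref{prop:tangent-sp-cokernel}, which gives the same codimension $\dim_F\nfr_{[\gamma]}$ and hence the same conclusion that the injective tangent map at $\gamma$ is an isomorphism.
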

\begin{proof}
    The closedness of $\Vcal(\gamma)$ in $\Ocal_M(\overline\gamma)\cdot N$ is immediate since it is the full preimage of a subbundle $\overline\Vcal(\gamma)\subset \left(  \Ocal_M(\overline\gamma)\cdot N \right)/N_2$, and it contains $\Ocal_P(\gamma)$ by Lemma~\ref{lem:V-bar-gamma}. The stability under the conjugation by $P$ can be checked modulo a normal subgroup $N_2$, which is immediate from Lemma~\ref{lem:V-bar-gamma}.  
    
    It remains to show the openness of $\Ocal_P(\gamma)$ in $\Vcal(\gamma)$. Set 
    \begin{equation}\label{eq:V0}
        \Vcal_0\coloneqq \ker(\nfr\twoheadrightarrow\nfr_{[\gamma]}),
    \end{equation}
     which is the full preimage of $\overline\Vcal_0\subset \gfr(1)\cong \nfr/\nfr_2$. Then, the analytic embeddings $\Ocal_P(\gamma) \hookrightarrow \Vcal(\gamma) \hookrightarrow\Ocal_M(\overline\gamma)\cdot N$ induce the following maps on the tangent spaces:
    \begin{equation}\label{eq:embedding-tang-sp-Vcal}
        \begin{tikzcd}
        \gfr_\gamma\backslash\pfr\arrow[r, hook]  & 
        (\mfr+\Vcal_0) \times_{\mfr} (\mfr_{\overline\gamma}\backslash\mfr) \arrow[r,hook]  & 
        \pfr \times_{\mfr} (\mfr_{\overline\gamma}\backslash\mfr).
        \end{tikzcd}
    \end{equation}
    Now, observe that the first two terms have the same codimension
     $\dim_F(\nfr_{[\gamma]})$ in the last term by Proposition~\ref{prop:openness-levi-preimage} and Lemma~\ref{lem:Laumon4.8.5}, so we have 
    \begin{equation}\label{eq:isom-tang-sp-Vcal}
        \begin{tikzcd}
        \gfr_\gamma\backslash\pfr\arrow[r, "\cong"]  & 
        (\mfr+\Vcal_0) \times_{\mfr} (\mfr_{\overline\gamma}\backslash\mfr) .
        \end{tikzcd}
    \end{equation}
    This shows that the embedding $\Ocal_P(\gamma)\hookrightarrow\Vcal(\gamma)$ is a local isomorphism, so it is an open immersion.
\end{proof}
\end{subequations}

\begin{rmk}\label{rmk:V-gamma-regular}
    By Corollary~\ref{cor:openness-levi-preimage}, we have $\nfr_{[\gamma]}= 0$ if and only if $N_2 = N$. In this case, we clearly have $\Vcal(\gamma) = \Ocal_M(\overline\gamma)\cdot N$.
\end{rmk}

\begin{rmk}\label{rmk:unip-orbit}
If $\overline\gamma$ is in the centre of $M$, which is the case when $\gamma$ is unipotent, then $\overline\Vcal(\gamma)$ is trivial and $\Vcal(\gamma) = N_2$. The analogous result for the $\Ad(P)$-orbit of a nilpotent Lie algebra element was obtained by R.~Rao \cite{RangaRao:OrbInt} in characteristic~$0$ and G.~McNinch \cite[Proposition~46]{McNinch:NilpOrbits} in any characteristic, which inspired our construction of $\Vcal(\gamma)$. However, it should be warned that in our setting $\Vcal(\gamma)$ cannot be written as a product of $\Ocal_M(\overline\gamma)$ with a subgroup of $N$ in general.
\end{rmk}

\begin{subequations}
    \addtocounter{equation}{-1}
\begin{constr}\label{constr:measure}
    Next, we define a measure on $\Vcal(\gamma)$, depending on the choice of Haar measures $dm$ on $M$, $dn$ on $N$, $dm_{\overline\gamma}$ on $M_{\overline\gamma}$, and $d \xi_{[\gamma]}$ on $\nfr_{[\gamma]}$, which we fix.

    By identifying $\gfr(1)\cong N/N_2$, we can view $\nfr_{[\gamma]}$ as a quotient group of $N$. We set
    \begin{equation}
        N'\coloneqq\ker\left( N\twoheadrightarrow \nfr_{[\gamma]} \right).
    \end{equation}
    The choice of Haar measures $dn$ on $N$ and $d\xi_{[\gamma]}$ on $\nfr_{[\gamma]}$ then determines a Haar measure $dn'$ on $N'$. Note also that the Lie algebra of $N'$ is $\Vcal_0$ in \eqref{eq:V0}, and $\Vcal(\gamma)$ is a principal homogeneous space over $\Ocal_M(\overline\gamma)$ under the action of $N'$.

    Define locally constant characters $\delta_{[\gamma]},\delta_{\Vcal_0}\colon P \to \RR_{>0}^\times$ as follows:
    \begin{align}
        \label{eq:delta-gamma}   
        \delta_{[\gamma]}(g)&\coloneqq \lVert \det\nolimits_F\left( \Ad(g^{-1})\mid \nfr_{[\gamma]} \right)\rVert \quad\text{and} \\
        \label{eq:delta-Vcal-0}     
        \delta_{\Vcal_0}(g)&\coloneqq \lVert \det\nolimits_F\left( \Ad(g^{-1})\mid \Vcal_0\right)\rVert, \quad
        \forall g\in P. 
    \end{align}
    Since the adjoint action of $N$ on $\nfr$ is unipotent, both characters above factor through the Levi quotient $M$.

    Choose an analytic open chart $\{U_\alpha\}$ of $M_{\overline\gamma}\backslash M$ with the following properties.
    \begin{enumerate}
        \item For each $U_\alpha$, there exists a local section $s_\alpha\colon U_\alpha\to M$ of $M\twoheadrightarrow M_{\overline\gamma}\backslash M$. (See \cite[Part~II, Ch~III, \S10]{Serre:LieThy} for the existence.)
        \item $\Vcal(\gamma)\to \Ocal_M(\overline\gamma)\cong M_{\overline\gamma}\backslash M$ admits a local section over each $U_\alpha$.
        \item The characters $\delta_{[\gamma]}|_M$ and $\delta_{\Vcal_0}|_M$ are constant on $s_\alpha(U_\alpha)$ for each $U_\alpha$.
    \end{enumerate}
    Let $D_\alpha$ denote the common value $\delta_{\Vcal_0}(m)$ for any $m\in s_\alpha(U_\alpha)\subset M$. 
    
    Now, we define a measure $d\mu_\alpha$ on $U_\alpha\times N'$ as 
    \begin{equation}\label{eq:d-mu-alpha}
       \left. \frac{dm}{dm_{\overline\gamma}}\right|_{U_\alpha} \cdot \left( D_\alpha\cdot dn' \right).
    \end{equation}
    Using the following open embedding $U_\alpha\times N'\to \Vcal(\gamma)$ defined as
    \begin{equation}\label{eq:local-chart-V-gamma}
        (m, n') \mapsto (m^{-1}\overline\gamma m)\cdot (m^{-1}n' m), \qquad \text{where }m\in s_\alpha(U_\alpha),\ n'\in N',
    \end{equation}
    one can glue the locally defined measures $d\mu_\alpha$ to a measure $d\mu$ on $\Vcal(\gamma)$.
 %
\end{constr}
\end{subequations}

\begin{lem}\label{lem:dmu-multiplier}
For any $g\in P$, we have $d\mu(g^{-1}xg)=\delta_{\Vcal_0}(g)d\mu(x)$.
\end{lem}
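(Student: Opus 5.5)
Since $\Vcal(\gamma)$ is stable under $P$-conjugation (Corollary~\ref{cor:V-gamma}) and $d\mu$ is glued from the local measures $d\mu_\alpha$, the claim is local. It suffices to check it separately for $g=n\in N$ and for $g=m_0\in M$, using $P=M\ltimes N$. The multiplier $\delta_{\Vcal_0}$ is a character trivial on $N$. So once both cases are known, the general case follows by writing $g=m_0n$ and composing the two transformation rules.

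\emph{Case $g=m_0\in M$.} In the chart \eqref{eq:local-chart-V-gamma}, conjugation by $m_0$ sends $(m^{-1}\overline\gamma m)(m^{-1}n'm)$ to $((mm_0)^{-1}\overline\gamma (mm_0))((mm_0)^{-1}n'(mm_0))$. Thus in charts it becomes $(m,n')\mapsto (mm_0,n')$, up to changing the local section from $s_\alpha$ to $s_\beta$ on the target chart. Right translation by $m_0$ preserves the invariant quotient measure $dm/dm_{\overline\gamma}$, because $M$ and $M_{\overline\gamma}$ are unimodular (Proposition~\ref{prop:unitary-centralisers} applied to the factors of $M$). The only discrepancy comes from the constants. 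The point $mm_0$ is represented in the target chart as $s_\beta(\cdot)=m_1mm_0$ with $m_1\in M_{\overline\gamma}$. Re-expressing $n'$ in that chart amounts to conjugating $n'$ by an element of $M_{\overline\gamma}$, and this changes $dn'$ by $\delta_{\Vcal_0}(m_1)$. So the total factor is $D_\beta/D_\alpha$ times a factor coming from $M_{\overline\gamma}$. I would check that $\delta_{\Vcal_0}$ and $\delta_{[\gamma]}$ are trivial on $M_{\overline\gamma}$; this holds because $M_{\overline\gamma}$ is a product of Weil restrictions of $\GL$'s, hence has no nontrivial positive unramified-type characters arising from these adjoint determinants, or more directly because the fibre description is well defined. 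Granting this, the factor is $\delta_{\Vcal_0}(mm_0)/\delta_{\Vcal_0}(m)=\delta_{\Vcal_0}(m_0)$, as required. The choice of the normalisation $D_\alpha=\delta_{\Vcal_0}(s_\alpha)$ was designed precisely so that this works, and the same computation also justifies the gluing.

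\emph{Case $g=n\in N$.} In this case one must show that conjugation by $n$ preserves $d\mu$. Write $x=(m^{-1}\overline\gamma m)\,n'_x$ with $n'_x\in m^{-1}N'm$. Then
\[
n^{-1}xn=(m^{-1}\overline\gamma m)\cdot\big[(m^{-1}\overline\gamma^{-1}m)\,n^{-1}(m^{-1}\overline\gamma m)\big]\,n'_x\,n .
\]
This has the same $M_{\overline\gamma}\backslash M$-coordinate, and its $N$-coordinate is $\phi(n'_x)=c\,n'_x\,n$ with $c=\Ad(m^{-1}\overline\gamma m)(n^{-1})\in N$. Since $c\,n'_x\,n\in m^{-1}N'm$ and $N'$ is normal in $N$, the map $\phi$ is a left translation composed with a right translation inside the unipotent (hence unimodular) group $N$, restricted to a coset of $N'$. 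I expect this step to be the main obstacle: one must verify that $c\,n\in m^{-1}N'm$, i.e.\ that $\Ad(m^{-1}\overline\gamma m)(n^{-1})\,n\in m^{-1}N'm$ modulo $N'$. This reduces, via Lemma~\ref{lem:V-bar-gamma} and the identification $N/N_2\cong\gfr(1)$, to the statement that $(1-\Ad(\overline\gamma^{-1}))\gfr(1)=\overline\Vcal_0$. Once this holds, $\phi$ is a translation in $N'$ and preserves $dn'$, so $d\mu$ is invariant, consistent with $\delta_{\Vcal_0}|_N=1$.
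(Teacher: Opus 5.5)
Your reduction to the cases $g\in M$ and $g\in N$ is the paper's, and your $N$-case is essentially the paper's argument. Your element $cn$ is exactly the $u\in m^{-1}N'm$ with $n^{-1}(m^{-1}\overline\gamma m)n=(m^{-1}\overline\gamma m)u$ supplied by Lemma~\ref{lem:V-bar-gamma}. So what you flag as the main obstacle is just the definition $\overline\Vcal_0=(1-\Ad(\overline\gamma^{-1}))\gfr(1)$ together with $N_2\subseteq N'$. (Strictly, the fibre map is $y\mapsto (cn)\cdot n^{-1}yn$, a left translation composed with conjugation by $n$. The conjugation preserves Haar measure on $m^{-1}N'm$ because $\Ad(n)$ is unipotent on its Lie algebra.)

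The $M$-case, however, rests on a false claim. Your final formula $\delta_{\Vcal_0}(mm_0)/\delta_{\Vcal_0}(m)$ silently replaces $D_\beta=\delta_{\Vcal_0}(m_1mm_0)$ by $\delta_{\Vcal_0}(mm_0)$. You justify this by asserting that $\delta_{\Vcal_0}$ is trivial on $M_{\overline\gamma}$, and that is false. The torus $\lambda(\GG_m(F))$ is central in $M$, hence lies in $M_{\overline\gamma}$. Also $\Vcal_0=\overline\Vcal_0\oplus\nfr_2$ has only positive $\lambda$-weights. So $\delta_{\Vcal_0}(\lambda(t))$ is a nontrivial power of $\lVert t\rVert$ whenever $\Vcal_0\neq0$. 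For example, take $\gamma$ regular unipotent in a quasi-split $\Urm(V,h)$ with $\dim_EV=2$. Then $\overline\gamma=1$, $M_{\overline\gamma}=M$, $\Vcal_0=\nfr=\gfr(2)$, and $\delta_{\Vcal_0}(\lambda(t))=\lVert t\rVert^{-2}$.

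Your stated reason also fails: $g\mapsto\lVert\det g\rVert$ is a nontrivial positive character of a Weil restriction of $\GL$. Likewise $\delta_{[\gamma]}$ is nontrivial on $\lambda(t)$ when $\nfr_{[\gamma]}\neq0$; it is only trivial on $G_\gamma$.

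The missing idea is a cancellation, and this is exactly what the normalisation $D_\alpha=\delta_{\Vcal_0}|_{s_\alpha(U_\alpha)}$ is designed for. On one side, $D_\beta=\delta_{\Vcal_0}(m_1)\,\delta_{\Vcal_0}(m)\,\delta_{\Vcal_0}(m_0)$. On the other, the change of fibre coordinate $n'\mapsto m_1n'm_1^{-1}$ multiplies volumes in $N'$ by $\lVert\det(\Ad(m_1)\mid\Vcal_0)\rVert=\delta_{\Vcal_0}(m_1)^{-1}$. The two $m_1$-factors cancel, and right $M$-invariance of $dm/dm_{\overline\gamma}$ leaves exactly $\delta_{\Vcal_0}(m_0)$. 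The same computation with $m_0=1$ is what makes the glued measure $d\mu$ independent of the sections $s_\alpha$; no triviality on $M_{\overline\gamma}$ is needed.

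With this correction your argument is complete. It then agrees with the paper, which simply declares the $M$-case immediate from the construction.
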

\begin{proof}
    If $g = m\in M$, the lemma follows immediately from the construction of $d\mu$. For $g=n\in N$, we claim that $d\mu(x) = d\mu(n^{-1}xn)$. Indeed, by Lemma~\ref{lem:V-bar-gamma}, for any $m\in M$ and $n\in N$ we have
    \[
    n^{-1}(m^{-1}\overline\gamma m)n = (m^{-1}\overline\gamma m)\cdot u
    \]
    for some $u\in m^{-1}N' m$. Thus, based on the local description of $d\mu$ in \eqref{eq:d-mu-alpha}, the effect of conjugation by $n^{-1}$ on $d\mu_\alpha$ is simply the left translation of $D_\alpha\cdot dn'$ by the element $mum^{-1}\in N'$. Since $dn'$ is a Haar measure on $N'$, this translation preserves the measure, which proves the desired invariance. Finally, we observe that $\delta_{\Vcal_0}(n) = \lVert\det_F(\Ad(n^{-1})\mid \Vcal_0)\rVert= 1$ because $n$ acts unipotently on the Lie algebra, completing the proof.
\end{proof}

Note that the fixed Haar measures $dm$ on $M$ and $dn$ on $N$ uniquely determines a left Haar measure $dm\cdot dn$ on $P$. Also, the fixed Haar measures $dm_{\overline\gamma}$ on $M_{\overline\gamma}$ and $d\xi_{[\gamma]}$ on $\nfr_{[\gamma]}$ uniquely determines a bi-invariant Haar measure $dg_\gamma$ on $G_\gamma$ by Lemma~\ref{lem:Laumon4.8.5} (as explained in Remark~\ref{rmk:Laumon4.8.7}). We conclude the section by comparing the quotient measure $\frac{dm\cdot d_n}{dg_{\gamma}}$ on $G_\gamma\backslash P$ with the restriction of $d\mu$.

\begin{prop}\label{prop:measures} \begin{enumerate}
    \item\label{prop:measures:modulus} The restriction $\delta_{[\gamma]}|_{G_\gamma}$ is trivial.
    \item\label{prop:measures:uniqueness} There exists a constant $C>0$ such that for any compact open subset $U\subset G_\gamma\backslash P$, we have
    \[
    C\cdot\int_{G_\gamma\backslash P} \triv_U(mn)\frac{dm\cdot dn}{dg_{\gamma}} = \int_{G_\gamma\backslash P}\delta_{[\gamma]}(g)\triv_U(g)\cdot c_\gamma^\ast (d\mu),
    \]
    where $\triv_U$ denotes the character function of $U$, and $c_\gamma^\ast (d\mu)$ is the pullback of $d\mu$ by $c_\gamma\colon g\mapsto g^{-1}\gamma g$.
\end{enumerate}
\end{prop}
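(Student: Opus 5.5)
For part (\ref{prop:measures:modulus}), I will use the snake-lemma sequence of Lemma~\ref{lem:Laumon4.8.5}. The group $G_\gamma$ acts by $\Ad$ on all terms and all maps are $G_\gamma$-equivariant, because $\gamma$ commutes with $G_\gamma$ and $G_\gamma\subset P$. It follows that
\[
\delta_{[\gamma]}(g)\;=\;\lVert\det(\Ad(g^{-1})\mid\gfr_\gamma)\rVert\cdot\lVert\det(\Ad(g^{-1})\mid\mfr_{\overline\gamma})\rVert^{-1},
\]
via the isomorphism \eqref{eq:Laumon4.8.5:Haar}. The first factor is the modulus of $G_\gamma$, which is trivial by Proposition~\ref{prop:unitary-centralisers}(\ref{prop:unitary-centralisers:unimodular}).

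For the second factor, note that $G_\gamma\to M$ lands in $M_{\overline\gamma}$. Here $\Mbf_{\overline\gamma}$ is a product of Weil restrictions of general linear groups and a unitary group (as in the proof of Corollary~\ref{cor:assoc-fil}), so it is reductive and hence unimodular. Its adjoint determinant on $\mfr_{\overline\gamma}$ therefore has trivial absolute value on all of $M_{\overline\gamma}$. This gives $\delta_{[\gamma]}|_{G_\gamma}=1$.

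For part (\ref{prop:measures:uniqueness}), I will argue by uniqueness of relatively invariant measures on the homogeneous space $G_\gamma\backslash P$.

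First, by Corollary~\ref{cor:V-gamma}, $c_\gamma$ identifies $G_\gamma\backslash P$ with the open subset $\Ocal_P(\gamma)\subset\Vcal(\gamma)$. So $c_\gamma^\ast(d\mu)$ is a Radon measure on $G_\gamma\backslash P$. It is positive on nonempty opens, since it is built locally from Haar measures.

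Second, by Lemma~\ref{lem:dmu-multiplier}, right translation by $h\in P$ multiplies $c_\gamma^\ast(d\mu)$ by $\delta_{\Vcal_0}(h)$, because $c_\gamma(gh)=h^{-1}c_\gamma(g)h$. The character $\delta_{[\gamma]}$ is trivial on $G_\gamma$ by part (\ref{prop:measures:modulus}), so it descends to a function on $G_\gamma\backslash P$. Consequently the measure $\delta_{[\gamma]}\,c_\gamma^\ast(d\mu)$ transforms under right translation by $h$ via $\delta_{[\gamma]}(h)\delta_{\Vcal_0}(h)$. Since $\Vcal_0=\ker(\nfr\to\nfr_{[\gamma]})$, this product equals $\lVert\det(\Ad(h^{-1})\mid\nfr)\rVert$.

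Third, I compare with the quotient measure $\frac{dm\cdot dn}{dg_\gamma}$, which exists because $G_\gamma$ is unimodular. Under right translation by $h$ it is multiplied by the inverse of the modulus of $P$, appropriately normalised. That factor is also $\lVert\det(\Ad(h^{-1})\mid\pfr)\rVert=\lVert\det(\Ad(h^{-1})\mid\nfr)\rVert$, since $\Ad$ on $\mfr$ has trivial modulus.

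The main obstacle is getting these sign conventions exactly right: left versus right translation, $h$ versus $h^{-1}$, and the direction of the modulus. I will check the conventions directly on the local charts \eqref{eq:local-chart-V-gamma}. Once the two characters are shown to agree, the uniqueness up to a positive scalar of relatively invariant measures with a given multiplier on $G_\gamma\backslash P$ yields the constant $C>0$. For that uniqueness I will cite Bourbaki's integration theory. Applying the resulting equality of measures to $\triv_U$ gives the stated identity.
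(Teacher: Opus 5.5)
Your proposal is correct and follows the paper's proof essentially step by step. For part (1) you take determinants of $\Ad(g^{-1})$, $g\in G_\gamma$, along the $G_\gamma$-equivariant sequence of Lemma~\ref{lem:Laumon4.8.5} and use unimodularity of $G_\gamma$ and $M_{\overline\gamma}$. For part (2) you show that both measures on $G_\gamma\backslash P$ scale under right translation by $\delta_{[\gamma]}\delta_{\Vcal_0}=\lVert\det(\Ad(g^{-1})\mid\nfr)\rVert=\delta_P$, and then conclude by Bourbaki's uniqueness theorem.

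One correction: $\Mbf_{\overline\gamma}$ need not be reductive when $\wp$ is inseparable, since it involves a Weil restriction through $E_\wp/E$. However, $M_{\overline\gamma}$ is still unimodular as a product of centralisers in general linear groups and a unitary group (Proposition~\ref{prop:unitary-centralisers}(\ref{prop:unitary-centralisers:unimodular}) and Laumon's Lemma~(4.8.6)), and that is all your argument needs.
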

\begin{proof}
    To prove (\ref{prop:measures:modulus}), we consider the determinant of $\Ad(g^{-1})$ for $g\in G_\gamma$ on the $5$-term exact sequence in Lemma~\ref{lem:Laumon4.8.5}. Firstly, by unimodularity we have 
    \[\lVert\det\nolimits_F\big( \Ad(g^{-1})\mid \gfr_\gamma \big)\rVert = \lVert\det\nolimits_F\big( \Ad(\overline g^{-1})\mid \mfr_{\overline\gamma} \big)\rVert = 1,\]
    where $\overline g\in M_{\overline\gamma}$ is the image of $g$. Also $\det_F(\Ad(g^{-1}))$ is identical on the kernel and the cokernel of $1-\Ad(\gamma^{-1})\colon \nfr\to\nfr$. Since the alternating product of determinants of $\Ad(g^{-1})$ on an exact sequence is trivial, we deduce that $\delta_{[\gamma]}(g) = 1$.
    
    Now, let us show (\ref{prop:measures:uniqueness}). Let $\delta_{[\gamma]}\cdot c_\gamma^\ast(d\mu)$ denote the measure on $G_\gamma\backslash P$ defined by the right side of the formula in (\ref{prop:measures:uniqueness}), which is well defined by ~(\ref{prop:measures:modulus}). We claim that the the right translation by $g\in P$ scales the measure $\delta_{[\gamma]}\cdot c_\gamma^\ast(d\mu)$ by $\delta_P(g)$. In fact, since we have $c_\gamma(xg) = g^{-1}c_\gamma(x) g$, the right translation by any $g\in P$ scales the measure $\delta_{[\gamma]}\cdot c_\gamma^\ast(d\mu)$ by the factor $\delta_{[\gamma]}(g)\cdot\delta_{\Vcal_0}(g)$ by Lemma~\ref{lem:dmu-multiplier}. Now, the multiplier can be rewritten as
    \[
    \delta_{[\gamma]}(g)\cdot\delta_{\Vcal_0}(g) = \lVert\det\nolimits_F\left( \Ad(g^{-1})\mid \nfr \right) \rVert = \lVert\det\nolimits_F\left( \Ad(g^{-1})\mid \pfr \right) \rVert = \delta_P(g).
    \]
    To see the second equality, it suffices to show $\lVert\det\nolimits_F\left( \Ad(g^{-1})\mid \mfr \right) \rVert=1$; in fact, this holds for $g\in M$ since $M$ is unimodular, and for $g\in N$ as $\Ad(g^{-1})$ acts unipotently on $\mfr$.

  Therefore, the right translation by $g\in P$ scales both $\delta_{[\gamma]}\cdot c_\gamma^\ast(d\mu)$ and $\frac{dm\cdot dn}{dg_{\gamma}}$ by $\delta_P(g)$.  Thus, by the uniqueness result \cite[Chap.~VII, \S2, No~6, Th~3]{Bourbaki:Integration7-8}, there exists a constant $C>0$ such that $C\cdot \frac{dm\cdot dn}{dg_\gamma} = \delta_{[\gamma]}\cdot c_\gamma^\ast(d\mu)$. This proves (\ref{prop:measures:uniqueness}).
\end{proof}

\section{Proof of the main theorem}\label{sec:proof}
In this section, we show the absolute convergence of the orbital integral $O_\gamma^G(f)$ when $\gamma$ is primary (Theorem~\ref{th:main-primary}), and thus prove Theorem~\ref{th:main}.

To utilise the closed submanifold $\Vcal(\gamma)\subset P$ constructed in Corollary~\ref{cor:V-gamma}, we need the following notion.
\begin{defn}\label{def:good-position}
    Let $\Gbf$ be a connected reductive group over $F$, with $G = \Gbf(F)$. Let $P$ and $K$ be a parabolic subgroup and a compact open subgroup of $G$, respectively. We say that $P$ and $K$ are in \emph{good position} if we have $G = P K$ and $P\cap K = (M\cap K)\ltimes (N\cap K)$ for some Levi decomposition $P = M\ltimes N$.
\end{defn}

\begin{exa}
    Definition~\ref{def:good-position} was originally introduced in \cite[p.~91]{Laumon:Book1} for $G=\GL_E(V)$. In this case, the choice of a parabolic subgroup $P$ with Levi decomposition $P=M\ltimes N$ corresponds to choosing a filtration $\Fil^\bullet V$ and a splitting $\gr^\bullet V$. We can construct a compact open subgroup $K\subset G$ in good position with $P$ as follows: choose an $\Oscr$-lattice $\Lambda$ in $V$ such that $(\Lambda\cap\gr^j V )_j$ defines a grading on $\Lambda$. (For example, let $\Lambda$ be the $\Oscr$-lattice generated by an $F$-basis for $V$ adapted to the grading $\gr^\bullet V$.) Then, the stabiliser $K$ of $\Lambda$ is in good position with $P$.
\end{exa}

In general, we can find a compact open subgroup $K\subset G$ in good position with a given parabolic $P$ via Bruhat--Tits theory.

\begin{prop}\label{prop:special-parahoric-good-position}
    Let $\Gbf$ be a connected reductive group over $F$. Then for any $F$-rational parabolic subgroup $\Pbf\subset \Gbf$ with Levi decomposition $\Pbf = \Mbf\ltimes\Nbf$, there exists a \emph{special} parahoric subgroup $K\subset G$ such that $G = P K$ and $P\cap K = (M\cap K)\ltimes (N\cap K)$. In particular, such $P$ and $K$ are in good position.
\end{prop}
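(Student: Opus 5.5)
We will deduce this from Bruhat--Tits theory, using the Iwasawa decomposition attached to a special vertex. The plan is to first put $(\Pbf,\Mbf)$ in standard position with respect to a maximal $F$-split torus, then choose a special vertex in the corresponding apartment.

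\emph{Step 1: standard position.} Choose a maximal $F$-split torus $\Sbf\subset\Mbf$. This is possible because the centre of $\Mbf$ contains the $F$-split torus $\lambda(\GG_m)$ for a cocharacter $\lambda$ defining $\Pbf$ (in our setting, the $\lambda$ of \eqref{eq:lambda}). Since $\Mbf$ is the centraliser of its maximal central split torus, any maximal split torus of $\Mbf$ is also maximal split in $\Gbf$. Choose further a minimal parabolic $\Pbf_0\subset\Pbf$ containing $\Sbf$, with Levi factor $\Zbf_{\Gbf}(\Sbf)$. Then $\Pbf$ is standard with respect to $(\Sbf,\Pbf_0)$, and $\Mbf$ is its unique Levi factor containing $\Zbf_{\Gbf}(\Sbf)$.

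\emph{Step 2: choice of $K$.} Let $\Acal$ be the apartment of $\Sbf$ in the Bruhat--Tits building of $G$. Pick a special vertex $x\in\Acal$; such vertices exist by Bruhat--Tits. Let $K$ be the parahoric subgroup attached to $x$. We then invoke the Iwasawa decomposition $G=P_0K$ for special parahorics, as in Bruhat--Tits~I (Prop.~4.4.3 together with 4.6.28) or Tits' Corvallis article, \S3.3. Since $P_0\subset P$, this gives $G=PK$. The one point requiring care is that the statement should hold for the parahoric itself, and not only for the full stabiliser of $x$. For this we use the refined form of Iwasawa stated with the connected parahoric, which holds because $G=P_0K$ is proved via the affine Bruhat decomposition and $\Zbf_{\Gbf}(\Sbf)(F)$ meets every component of the stabiliser.

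\emph{Step 3: the intersection $P\cap K$.} Here we use that $K$ is generated by $Z(F)^0:=\Zbf_{\Gbf}(\Sbf)(F)\cap K$ and the filtration subgroups $U_{a,x}$ of the root groups. The parahoric structure theory also gives the product decomposition
\[
K=\Bigl(\prod_{a\in\Phi^-_{\mathrm{nd}}}U_{a,x}\Bigr)\cdot Z(F)^0\cdot\Bigl(\prod_{a\in\Phi^+_{\mathrm{nd}}}U_{a,x}\Bigr)
\]
for any ordering. Applied with respect to $\lambda$, this yields $K\cap P=(K\cap M)(K\cap N)$ with $K\cap N=\prod_{a\in\Phi(N)}U_{a,x}$. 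Concretely, if $g=mn\in P\cap K$, conjugating by $\lambda(t)$ for $t\in\Oscr$ contracting $N$ keeps everything in $K$, because $\lambda(\Oscr^\times)$ and the root filtrations are compatible with $x$ lying in $\Acal$. Taking the limit $t\to0$ shows $m\in K$, since $K$ is closed, and therefore $n\in K$. This limit argument is the cleanest route, and we expect it to be short. It needs only that $\lambda(t)K\lambda(t)^{-1}\cap P\subseteq K$-orbits stay bounded, which follows from $\lambda(t)$ fixing $x$ up to translation in $\Acal$ and the contraction of $N$.

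\emph{Main obstacle.} We expect the hardest step to be justifying the limit argument in Step~3 rigorously. A cleaner alternative, which we would try first, is to cite directly the known decomposition $K\cap P=(K\cap M)\ltimes(K\cap N)$ for parahorics attached to points of the apartment of $\Sbf\subset\Mbf$ (Bruhat--Tits~II, 5.2.4/4.6). If that citation is unavailable in exactly this form, we would carry out the contraction argument instead. In the unitary case, one can alternatively verify everything by hand. Take $K$ to be the stabiliser of a self-dual (or almost self-dual) lattice $\Lambda$ adapted to the splitting $V=\bigoplus V(j)$, as in the $\GL$ example. Then the good-position properties are checked directly as in that example: the Iwasawa decomposition follows from Witt-basis reduction of isotropic flags modulo $\Lambda$.
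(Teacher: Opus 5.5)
Your Steps~1 and~2 match the paper: a maximal split torus $\Sbf\subset\Mbf$, a special vertex $x\in\Acal(\Sbf)$, and the Iwasawa decomposition for the parahoric itself (the paper cites Kaletha--Prasad, Thm.~5.3.4). The gap is in Step~3, which is the real content of the proposition. Neither argument you give there works, and the Bruhat--Tits~II citation is left unverified, so the intersection property is not proved.

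\emph{The product decomposition is false at a special vertex.} Take $\Gbf=\GL_2$ and $K=\GL_2(\Oscr)$. Every element of $U_{-a,x}\,T(\Oscr)\,U_{a,x}$ has a unit as its upper-left entry, so $\left(\begin{smallmatrix}0&1\\1&0\end{smallmatrix}\right)\in K$ is not of that form. Such factorisations, including the relative one $K=(K\cap N^-)(K\cap M)(K\cap N)$, are Iwahori-type statements. At a general $x$, the Bruhat--Tits decomposition of $K$ carries an extra factor of Weyl representatives in $N_{\Gbf}(\Sbf)(F)\cap K$, and this gives no direct information about $K\cap P$.

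\emph{The limit argument is unsupported.} It rests on $\lambda(t)g\lambda(t)^{-1}\in K$ for $g\in P\cap K$ and $t\in\Oscr\setminus\{0\}$, which you do not establish. Since $\lambda(\varpi^k)$ translates $x$ in $\Acal(\Sbf)$, the group $\lambda(\varpi^k)K\lambda(\varpi^k)^{-1}$ is in general the parahoric of a different point. Appealing to the filtrations $U_{a,x}$ presupposes $P\cap K=(M\cap K)(N\cap K)$, which is circular. Boundedness of the orbit would only place $m$ in some compact set, not in $K$.

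\emph{The missing idea is that $\lambda$ is integral for the parahoric group scheme.} Since $x\in\Acal(\Sbf)$, the schematic closure $\Sscr$ of $\Sbf$ in $\Gscr=\Gscr^0_x$ is a split $\Oscr$-torus. The paper obtains this from a torus $\Tbf\supset\Sbf$ of $\Mbf$ that is maximal split over $F^{\mathrm{ur}}$, together with unramified descent (Kaletha--Prasad 9.3.4--9.3.5). Hence $\lambda$ extends to $\GG_{m,\Oscr}\to\Gscr$. The dynamic method over $\Oscr$ (Conrad--Gabber--Prasad, Prop.~2.1.8(3)) then gives $\Pscr=\Mscr\ltimes\Nscr$, where $\Pscr=P_{\Gscr}(\lambda)$, $\Mscr=Z_{\Gscr}(\lambda)$ and $\Nscr=U_{\Gscr}(\lambda)$. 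Taking $\Oscr$-points yields $P\cap K=(M\cap K)\ltimes(N\cap K)$.

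This input also repairs your limit argument. Once $\lambda$-conjugation is a $\GG_{m,\Oscr}$-action on $\Gscr$, every negative-weight regular function on $\Gscr$ vanishes at $g\in P\cap K$, because it already does so over $F$. So $t\mapsto\lambda(t)g\lambda(t)^{-1}$ extends to a morphism $\mathbb{A}^1_{\Oscr}\to\Gscr$, and evaluating at $t=0$ gives $m\in\Gscr(\Oscr)=K$ directly, with no topological limit needed.

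Finally, your unitary-by-hand fallback does not prove the proposition as stated, which is for arbitrary connected reductive $\Gbf$. Moreover, the stabiliser of a self-dual lattice need not be parahoric; it can be disconnected when $E/F$ is ramified.
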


\begin{proof}
    Let $\Sbf$ be a maximal \emph{split} $F$-torus of $\Mbf$, which is necessarily a maximal split $F$-torus of $\Gbf$. We choose a special vertex $x\in\Acal(\Sbf)$, where $\Acal(\Sbf)$ is the apartment corresponding to $\Sbf$ in the Bruhat--Tits building of $\Gbf$. This choice defines a \emph{special parahoric group scheme} $\Gscr\coloneqq\Gscr^0_x$ over $\Oscr$, and the corresponding special parahoric subgroup $K = \Gscr(\Oscr)$. By \cite[Theorem~5.3.4]{KalethaPrasad:BruhatTits}, we have the Iwasawa decomposition $G = P K$, so it remains to verify that $P\cap K = (M\cap K)\ltimes (N\cap K)$.

    Let $F^{\ur}\subset \scl F$ be the maximal unramified subextension, with valuation ring $\Oscr^{\ur}$. By \cite[Proposition~9.3.4]{KalethaPrasad:BruhatTits}, there exists an $F$-torus $\Tbf\subset\Mbf$ containing $\Sbf$ such that $\Tbf_{F^{\ur}}$ is a maximal $F^{\ur}$-split torus of $\Mbf_{F^{\ur}}$ (and hence of $\Gbf_{F^{\ur}}$). The closure $\Tscr_{\Oscr^{\ur}}$ of $\Tbf_{F^{\ur}}$ in $\Gscr_{\Oscr^{\ur}}$ is an $\Oscr^{\ur}$-split torus, which follows from the unramified descent of parahoric groups and the easy direction of \cite[Proposition~9.3.5]{KalethaPrasad:BruhatTits}. Furthermore, $\Tscr_{\Oscr^{\ur}}\subset \Gscr_{\Oscr^{\ur}}$ descends to a closed $\Oscr$-torus $\Tscr\subset\Gscr$, and the schematic closure $\Sscr\subset\Tscr$ of $\Sbf$ is a closed $\Oscr$-split torus. (This deduction relies on unramified descent and the $\Gal(F^{\ur}/F)$-stability of $\Tscr_{\Oscr^{\ur}}\subset \Gscr_{\Oscr^{\ur}}$.)

    Given $\Pbf = \Mbf\ltimes\Nbf$, we can find an $F$-rational cocharacter $\lambda\colon \GG_m \to \Sbf$ such that $\Mbf = \Zbf_\Gbf(\lambda)$ is the centraliser of $\lambda$, and $\Pbf= \Pbf_\Gbf(\lambda)$ is the subgroup of elements $g$ for which the limit $\lim\limits_{t\to 0}\lambda(t)g\lambda(t)^{-1}$ exists. Consequently, $\Nbf = \Nbf_\Gbf(\lambda)$ is the subgroup of elements $g$ such that $\lim\limits_{t\to 0}\lambda(t)g\lambda(t)^{-1} = 1$. (See \cite[Lemma~2.1.4]{ConradGabberPrasad:PRedGp2ed} for the precise definition of the limit.) 
    
    Because $\Sscr$ is a split $\Oscr$-torus, $\lambda$ extends uniquely to a homomorphism $\GG_{m,\Oscr}\to\Sscr$ over $\Oscr$, which we also denote by $\lambda$. We can then analogously define the  closed $\Oscr$-subgroups $\Pscr\coloneqq \Pbf_{\Gscr}(\lambda)$, $\Mscr\coloneqq \Zbf_\Gscr(\lambda)$, and $\Nscr\coloneqq \Nbf_\Gscr(\lambda)$ of $\Gscr$, which are smooth $\Oscr$-models of $\Pbf$, $\Mbf$, and $\Nbf$, respectively (see \cite[Lemmas~2.1.4, 2.1.5]{ConradGabberPrasad:PRedGp2ed}). Furthermore, by \cite[Proposition~2.1.8(3)]{ConradGabberPrasad:PRedGp2ed}, the multiplication map induces a scheme-theoretic isomorphism:
    \[\begin{tikzcd}
      \Mscr\ltimes\Nscr \arrow[r,"\cong"] & \Pscr .
    \end{tikzcd}\]

    Finally, observe that taking $\Oscr$-points yields
    \[
    \Pscr(\Oscr) = \Pbf(F)\cap \Gscr(\Oscr) = P\cap K.
    \]
    Similarly, we have $\Mscr(\Oscr) = M\cap K$ and $\Nscr(\Oscr) = N\cap K$. Evaluating the scheme-theoretic Levi decomposition on $\Oscr$-points immediately gives the desired point-set decomposition $P\cap K = (M\cap K)\ltimes(N\cap K)$.
\end{proof}

\begin{rmk}
If $\Gbf = \Ubf(V,h)$ is a unitary group where $E/F$ is not wildly ramified (that is, either the characteristic of $F$ is not $2$ or $E/F$ is unramified), then given a parabolic subgroup $\Pbf \subset \Gbf$ with a fixed Levi decomposition, one can explicitly construct a special parahoric subgroup $K\subset G$ in good position with $P$. This construction relies on the concrete presentation of special parahoric subgroups of $G$ provided in \cite[\S3]{GanHankeYu:MassFormula}.
\end{rmk}

Now we are ready to prove the main result.
\begin{thm}\label{th:main-primary}
    Let $\Gbf = \Ubf(V,h)$ be a unitary group over a non-archimedean local field $F$. Let $\gamma\in G$ be a primary element. Then for any $f\in C^\infty_c(G)$, the orbital integral $\Ocal^G_\gamma(f)$ absolutely converges. In fact, it reduces to a finite sum.
\end{thm}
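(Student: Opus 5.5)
We reduce to an integral over $G_\gamma\backslash P$ via the Iwasawa decomposition and then use the closed set $\Vcal(\gamma)\subset P$ and the measure comparison in Proposition~\ref{prop:measures}. Let $\Pbf=\Mbf\ltimes\Nbf$ be the parabolic attached to $\gamma$ in Definition~\ref{def:assoc-fil}. By Lemma~\ref{lem:centraliser-parabolic} we have $G_\gamma\subset P$. Choose a special parahoric $K$ in good position with $P$ (Proposition~\ref{prop:special-parahoric-good-position}), so that $G=PK$. We may assume $f\geqslant 0$, since we are proving absolute convergence. Normalise $dg$ so that $dg = dm\,dn\,dk$ in the Iwasawa coordinates, with the usual modulus factor absorbed into the left Haar measure $dm\cdot dn$ on $P$. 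Since $G_\gamma\subset P$, we get
\[
O^G_\gamma(f)=\int_{G_\gamma\backslash P}\tilde f(g^{-1}\gamma g)\,\frac{dm\cdot dn}{dg_\gamma},\qquad \tilde f(x)\coloneqq\int_K f(k^{-1}xk)\,dk,
\]
up to a modulus twist $\delta_P^{\pm1}$, which we check by a routine comparison of quotient measures. The function $\tilde f$ is again locally constant and compactly supported on $G$. Restricting it to $P$ gives $\phi\coloneqq\tilde f|_P\in C^\infty_c(P)$, because $P$ is closed in $G$.

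\emph{Rewriting the integral on $\Vcal(\gamma)$.} Proposition~\ref{prop:measures}(\ref{prop:measures:uniqueness}) identifies $\frac{dm\cdot dn}{dg_\gamma}$ with $C^{-1}\delta_{[\gamma]}\cdot c_\gamma^\ast(d\mu)$. The characters $\delta_{[\gamma]}$ and $\delta_P$ are conjugation-invariant functions on $G_\gamma\backslash P$ pulled back along $c_\gamma$, because they factor through $M$. Hence the integral becomes
\[
C^{-1}\int_{\Ocal_P(\gamma)}\phi(x)\,\psi(x)\,d\mu(x)
\]
for a locally constant weight $\psi$. Here $\psi$ is built from $\delta_{[\gamma]}$ and the modulus factor, each evaluated at some $g$ with $x=g^{-1}\gamma g$. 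The main point to verify is that $\psi$ is a well-defined function of $x$ that extends to a locally bounded function on the closure in $\Vcal(\gamma)$.

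To check this, write $\delta_{[\gamma]}\delta_{\Vcal_0}=\delta_P$. Then express $\psi$ through $m$ in the chart \eqref{eq:local-chart-V-gamma} as $\psi = D_\alpha^{\pm1}$ times a power of $\delta_P(m)$. This is locally constant on $\Vcal(\gamma)$, since $m$ ranges over the compact pieces $s_\alpha(U_\alpha)$. Well-definedness modulo $G_\gamma$ follows from Proposition~\ref{prop:measures}(\ref{prop:measures:modulus}) together with the unimodularity of $G_\gamma$ and $P$-modularity.

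\emph{Finiteness.} By Corollary~\ref{cor:V-gamma}, $\Ocal_P(\gamma)$ is an open subset of the closed submanifold $\Vcal(\gamma)$ of $\Ocal_M(\overline\gamma)\cdot N$. Moreover, $\Ocal_M(\overline\gamma)$ is closed in $M$ by Corollary~\ref{cor:assoc-fil}(\ref{cor:assoc-fil:Levi}), so $\Ocal_M(\overline\gamma)\cdot N$ is closed in $P$ and hence $\Vcal(\gamma)$ is closed in $P$. Therefore $\operatorname{supp}\phi\cap\Vcal(\gamma)$ is compact. We cover it by finitely many compact open chart pieces of the form $c_\alpha$-images of (compact open in $U_\alpha$) $\times$ (compact open in $N'$), on which $\phi$ and $\psi$ are constant. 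The integral is then bounded by $\int_{\Vcal(\gamma)}\phi\,\psi\,d\mu$, which is a finite sum of products of values and $d\mu$-volumes of compact opens. All of these volumes are finite because $d\mu$ is a Radon measure built locally from Haar measures as in \eqref{eq:d-mu-alpha}. In fact the integral over $\Ocal_P(\gamma)$ is itself a finite sum, since on each piece $\Ocal_P(\gamma)$ is open and we take the volume of an open set inside a compact set. We can sharpen this by making the pieces fine enough that each meets the open orbit in a union of pieces, which is possible because $\Ocal_P(\gamma)$ is open in $\Vcal(\gamma)$.

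The step we expect to be the main obstacle is the bookkeeping of modulus characters. Concretely, it is checking that after applying the Iwasawa decomposition and Proposition~\ref{prop:measures}, the extra factors $\delta_P$ and $\delta_{[\gamma]}$ assemble into a function on $\Vcal(\gamma)$ that is locally bounded near the boundary $\Vcal(\gamma)\setminus\Ocal_P(\gamma)$. We would first try to show that the total factor is exactly $\delta_{\Vcal_0}(m)^{-1}$ evaluated at the Levi coordinate $m$. This depends only on the image in $\Ocal_M(\overline\gamma)$, which is a closed orbit, so the factor is locally constant on all of $\Vcal(\gamma)$.
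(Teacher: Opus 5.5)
Your route is the paper's: the Iwasawa decomposition with a special parahoric $K$, reduction to $\int_{G_\gamma\backslash P}f_K(p^{-1}\gamma p)\,\frac{dm\cdot dn}{dg_\gamma}$, Proposition~\ref{prop:measures} to pass to $d\mu$ on $\Vcal(\gamma)$, and then compactness. Your remark that $\Vcal(\gamma)$ is closed in $P$ because $\Ocal_M(\overline\gamma)$ is closed in $M$ is correct and useful. There is no modulus twist in the first step: with $dg=dm\,dn\,dk$ and $dm\,dn$ a left Haar measure on $P$, the reduction is exact. So the weight on $\Ocal_P(\gamma)$ is $\psi(x)=C^{-1}\delta_{[\gamma]}(g)$ for $x=g^{-1}\gamma g$, and you rightly single out its boundedness near $\Vcal(\gamma)\setminus\Ocal_P(\gamma)$ as the crux.

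The argument you give for that boundedness fails. The chart coordinate $m\in s_\alpha(U_\alpha)$ records the Levi component of $g$ only up to left multiplication by $M_{\overline\gamma}$. Moreover, $\delta_{[\gamma]}$ is trivial on $G_\gamma$ (Proposition~\ref{prop:measures}(\ref{prop:measures:modulus})) but in general not on $M_{\overline\gamma}$. Hence $\psi$ varies along the fibres of $\Vcal(\gamma)\to\Ocal_M(\overline\gamma)$ and need not be locally constant at boundary points. Neither $D_\alpha^{\pm1}$ times a power of $\delta_P(m)$ nor $\delta_{\Vcal_0}(m)^{-1}$ is the weight.

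Concretely, take the rank-$3$ element $\gamma$ of type $E[T]/(T-1)^2\oplus E[T]/(T-1)$ from Remark~\ref{rmk:openness-levi-preimage}. Then:
\begin{itemize}
\item $P$ is a Borel subgroup and $\overline\gamma=1$;
\item $M_{\overline\gamma}=M$ consists of the $m(t,u)$ acting by $t,u,\overline t^{-1}$ on $V(1),V(0),V(-1)$;
\item $\nfr_{[\gamma]}=\gfr(1)$, and $\Vcal(\gamma)=N_2\cong F$ (Remark~\ref{rmk:unip-orbit}).
\end{itemize}
Choose a coordinate $c$ on $N_2$ with $\gamma\leftrightarrow c=1$. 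The map $x\mapsto m(t,u)^{-1}x\,m(t,u)$ is then $c\mapsto \mathrm{N}_{E/F}(t)^{-1}c$, so $\Ocal_P(\gamma)=\{c\in \mathrm{N}_{E/F}(E^\times)\}$ and $d\mu$ is a Haar measure $dc$. A direct computation gives $\frac{dm\cdot dn}{dg_\gamma}=\mathrm{const}\cdot\lVert c\rVert\,dc$, that is, $\psi(c)\propto\delta_{[\gamma]}(m(t,u))=\lVert c\rVert$.

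This weight is not constant on any neighbourhood of the boundary point $c=0$, and $c=0$ lies in the support of $f_K|_{\Vcal(\gamma)}$ for $f=\triv_K$. Your candidate $\delta_{\Vcal_0}^{-1}$, which is what an extra $\delta_P^{-1}$ would produce, equals $\lVert c\rVert^{-1}$ here. That is not even integrable against $dc$ near $0$.

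What is missing is a genuine estimate: $\psi$ must be bounded on $S\cap\Ocal_P(\gamma)$ for every compact $S\subset\Vcal(\gamma)$. In the example $\psi$ extends continuously by $0$; compare the extra weight for non-even nilpotent orbits in Ranga Rao's argument. Once this is established, $\int_{S\cap\Ocal_P(\gamma)}\psi\,d\mu\leqslant\sup\psi\cdot\mu(S)<\infty$ finishes the proof.

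Be aware that the paper's proof follows your route exactly and settles this point by declaring $\delta_{[\gamma]}$ constant on each $S_\beta$ ``by construction''. The example shows this fails for $g\mapsto\delta_{[\gamma]}(g)$ on $c_\gamma^{-1}(S_\beta^\circ)$, so the paper does not supply the estimate either.
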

\begin{proof}
    To each primary element $\gamma\in G$, we have constructed a parabolic subgroup $\Pbf\subset\Gbf$ in Definition~\ref{def:assoc-fil}. Choose a Levi decomposition $\Pbf = \Mbf\ltimes\Nbf$ as in Definition~\ref{def:assoc-fil}, and choose a special parahoric subgroup $K\subset G$ in good position with $P$ using Proposition~\ref{prop:special-parahoric-good-position}. 

    Choose left Haar measures $dm$ on $M$ and $dn$ on $N$ so that $M\cap K$ and $N\cap K$ have volume~$1$, respectively. We also fix Haar measures $dm_{\overline\gamma}$ on $M_{\overline\gamma}$ and $d\xi_{[\gamma]}$ on $\nfr_{[\gamma]}$. By Lemma~\ref{lem:Laumon4.8.5} and Remark~\ref{rmk:Laumon4.8.7}, $dm_{\overline\gamma}$ and $d\xi_{[\gamma]}$ uniquely determine a Haar measure $dg_\gamma$ on $G_\gamma$. Furthermore, by Construction~\ref{constr:measure} we also obtain a unique measure $d\mu$ on $\Vcal(\gamma)$, where $\Vcal(\gamma)$ is defined in Corollary~\ref{cor:V-gamma}. Lastly, choose a Haar measure $dg$ on $G$ so that $K$ has volume $1$. We may view $dg$ as a Haar measure on $K$.
    
    Now, for any $f\in C^\infty_c(G)$ we get
    \begin{align}
         O^G_\gamma(f) = \int_{G_\gamma\backslash G}f(g^{-1}\gamma g) \frac{dg}{dg_\gamma} &= \int_{G_\gamma\backslash P}\int_K f(k^{-1}h^{-1}\gamma h k) dk\cdot\frac{dh}{dg_\gamma}\\
         & = \int_{G_\gamma\backslash P} f_K(h^{-1}\gamma h)\frac{dh}{dg_\gamma},\notag
    \end{align}
    where $f_K(h)\coloneqq \int_K f(k^{-1}hk)dk$. Note that $f_K\in C^\infty_c(G)$. (If $f$ is constant on $gK'$ for some open normal subgroup $K'\subset K$, then $\bigcap_{k\in K}(kgk^{-1})K'$ is essentially a finite intersection where $f_K$ is constant. If $f$ is supported on a finite union of $Kg_i K$'s, then so is $f_K$.) In particular, we have $f_K|_P\in C^\infty_c(P)$.   

    Recall that $\Vcal(\gamma)$ is a closed $F$-submanifold of $P$, containing $\Ocal_P(\gamma)$ as an open $F$-submanifold, by Corollary~\ref{cor:V-gamma}. Now write the support of $f_K|_{\Vcal(\gamma)}$ as a disjoint union $\bigsqcup_\beta S_\beta$ for some closed compact subset $S_\beta\subset \Vcal(\gamma)$ such that 
    \[f_K|_{\Vcal(\gamma)} = \sum_\beta a_\beta\triv_{S_\beta}\]  
    where $a_\beta\in\CC$, and it satisfies the following property: choosing an open chart $\{U_\alpha\}$ of $M_{\overline\gamma}\backslash M$ as in Construction~\ref{constr:measure}, each $S_\beta$ is contained in the image of the open embedding $U_\alpha\times N'\to \Vcal(\gamma)$ defined in \eqref{eq:local-chart-V-gamma} for some $U_\alpha$. By construction, $\delta_{[\gamma]}$ \eqref{eq:delta-gamma} is constant on each $S_\beta$, so we let $C_\beta$ denote the common value of $\delta_{[\gamma]}(g)$ for $g\in\ S_\beta$.

    Now, we set
    \[S_\beta^\circ\coloneqq S_\beta \cap \Ocal_P(\gamma).\]
    Then, we have 
    \[
    \vol\Big(S_\beta; d\mu\Big) \leqslant \vol\Big(S_\beta; d\mu\Big) <\infty;
    \]
    in fact, the boundedness of the volume of $S_\beta$ is by compactness, and the inequality is due to the inclusion $S_\beta^\circ\subseteq S_\beta$.
    Thus, by Proposition~\ref{prop:measures} we have
    \[
        \vol \Big( S^\circ_\beta; \frac{dm\cdot dn}{d g_\gamma} \Big) = C^{-1}\cdot C_\beta\cdot\vol\Big(S_\beta^\circ; d\mu\Big) <\infty, 
    \]
    where $C$ is the positive constant in Proposition~\ref{prop:measures}(\ref{prop:measures:uniqueness}). 

    In conclusion, we obtain
    \[
    O_\gamma^G(f) = \sum_\beta a_\beta\cdot  \vol\Big(S_\beta^\circ; \frac{dm\cdot dn}{d g_\gamma}\Big),
    \]
    which is a finite sum of complex numbers.
\end{proof}

We are ready to show the absolute convergence of $O^G_\gamma(f)$ for any $\gamma\in G$ and $f\in C^\infty_c(G)$, and conclude the proof of Theorem~\ref{th:main}.
\begin{proof}[Proof of {Theorem~\ref{th:main}}]
    By Corollary~\ref{cor:Gs}, to show the absolute convergence of $O_\gamma^G(f)$ it suffices to show the absolute convergence when $\gamma$ is primary, which is proved in Theorem~\ref{th:main-primary}.
    \end{proof}

\subsection*{Acknowledgement}
This work was supported by the National Research Foundation of Korea(NRF) grant funded by the Korea government(MSIT). (RS-2026-25469397). The authors acknowledge the use of Google's Gemini for English language editing and literature search during the preparation of this manuscript.

  \printbibliography{}
\end{document}